\newtheorem{theorem}{Theorem}[section]
\newtheorem{proposition}[theorem]{Proposition}
\newtheorem{lemma}[theorem]{Lemma}
\newtheorem{corollary}[theorem]{Corollary}
\theoremstyle{definition}
\newtheorem{definition}[theorem]{Definition}
\theoremstyle{remark}
\newtheorem{remark}[theorem]{Remark}
\newtheorem{example}[theorem]{Example}
\numberwithin{equation}{section}
\begin{document}

\title{Bordism of Constrained Morse Functions}

\author[D.J. Wrazidlo]{Dominik J. Wrazidlo}

\address{Institute of Mathematics for Industry, Kyushu University, Motooka 744, Nishi-ku, Fukuoka 819-0395, Japan}
\email{d-wrazidlo@math.kyushu-u.ac.jp}

\subjclass[2010]{Primary 57R45, 57R90; Secondary 57R60, 57R65}

\date{\today.}

\keywords{Bordism of differentiable maps, Morse theory, indefinite fold singularity, elimination of cusps, Stein factorization, homotopy sphere}

\begin{abstract}
We call a Morse function $f$ on a closed manifold \emph{$k$-constrained} if neither $f$ nor $-f$ has critical points of indefinite Morse index $< k$.
In this paper we study bordism groups of $k$-constrained Morse functions, and thus interpolate between the case $k = 1$ of bordism groups of Morse functions (computed by Ikegami) and the case $k \gg 1$ of bordism groups of special generic functions (computed by Saeki).

We employ Levine's elimination of cusps, Stein factorization, the two-index theorem of Hatcher-Wagoner, and a handle extension theorem for fold maps due to Gay-Kirby to show that the notion constrained bordism is strongly related to so-called connective bordism.
As an application of our results we show that the oriented bordism group of constrained Morse functions detects exotic Kervaire spheres in certain dimensions.

\end{abstract}

\maketitle
\tableofcontents

\section{Introduction}

Bordism theory for differentiable maps has been a central issue in the field of global singularity theory ever since it was initiated in the mid-1950s by Ren\'{e} Thom \cite{thom}.
Thom computed bordism groups of embedded manifolds, and his homotopy theoretic approach relies crucially on the Pontrjagin-Thom construction.
Later, Rim\'{a}nyi and Sz\H{u}cs \cite{rim} adopted this concept to study bordism groups of smooth maps with certain prescribed types of singularities, where they considered maps of $m$-manifolds into a fixed $n$-manifold in the case of positive codimension, $n-m > 0$.
Kalm\'{a}r \cite{kal} found an analogous construction for smooth maps with prescribed singular fibers in the case of negative codimension.

More explicit methods of geometric topology have recently been applied successfully to compute bordism groups of smooth maps with concrete singularities of the mildest types.
For instance, bordism groups of Morse functions, which were originally introduced by Ikegami and Saeki in \cite{ikesae}, have been determined entirely by Ikegami \cite{ike} by using Levine's cusp elimination technique \cite{lev} and the Kervaire semi-characteristic \cite{ker}.
Furthermore, employing the technique of Stein factorization \cite{burder} as well as Cerf's pseudo-isotopy theorem \cite{cer}, Saeki \cite{sae2} showed that oriented bordism groups of so-called special generic functions, i.e., Morse functions having only minima and maxima as critical points, are isomorphic to groups of homotopy spheres \cite{KM}.
More generally, Sadykov \cite{sad} combined the Pontrjagin-Thom construction with Smale-Hirsch theory \cite{hirsmale} to express bordism groups of special generic maps in terms of stable homotopy theory.
\par\medskip

In this paper, we develop a geometric-topological approach to study
bordism groups of Morse functions whose critical points are subject to the following type of index constraints.
For a given integer $k \geq 1$ we call a Morse function on a closed $n$-manifold \emph{$k$-constrained} if all indefinite Morse indices of its critical points are contained in the interval $\{k, \dots, n-k\}$.
Thus, the notion of a $k$-constrained Morse function interpolates between ordinary Morse functions ($k = 1$) and special generic functions ($k > n/2$).
From the viewpoint of Morse theory \cite{mil2} we make the fundamental observation that a closed manifold of dimension $n \neq 4$ admits a $k$-constrained Morse function if and only if it is $(k-1)$-connected (where the case $n = 3$ relies on Perelman's solution to the smooth Poincar\'{e} conjecture).
This observation suggests that bordism groups of constrained Morse functions should be related strongly to so-called connective bordism groups (see \Cref{connective bordism}), which will in fact be manifest in our \Cref{theorem constrained generic bordism and connective bordism}.
In the context of a generalization of the Madsen-Weiss theorem, Perlmutter \cite{perl} has recently imposed Morse index constraints of the above form on Morse functions defined on morphisms of the bordism category.

The central notion of $k$-constrained bordism (see \Cref{definition constrained bordism}) involves so-called fold maps of bordisms into the plane.
By definition, a fold map on a manifold of dimension $n+1$ is a smooth map all of whose singular points are determined by map germs of the form
\begin{displaymath}
(t, x_{1}, \dots, x_{n}) \mapsto (t, -x_{1}^{2} - \dots - x_{i}^{2} + x_{i+1}^{2} + \dots + x_{n}^{2}).
\end{displaymath}
Thus, fold maps into the plane can locally be thought of as one-parameter families of Morse functions, and the \emph{absolute index} $\operatorname{max}\{i, n-i\}$ of a fold point with the above map germ turns out to be a locally constant invariant of singular points.
Depending on whether the absolute index of a component of the singular locus is 
Therefore, index constraints imposed on Morse functions induce constraints on the absolute index of fold maps in a natural way.
In generalization of the notion of (in)definite Morse critical points, fold points of index $n$ are called \emph{definite}, and \emph{indefinite} otherwise.
In \Cref{definition constrained bordism} we will introduce the correct notion of bordism between constrained Morse functions.

Kitazawa's construction of so-called round fold maps \cite[p. 339]{kita} implies that total spaces of fiber bundles over spheres with fiber a twisted sphere admit fold maps into the plane with connected indefinite fold locus.
The general existence problem for fold maps in the presence of index constraints has been posed by Saeki in Problem 5.13 of \cite[p. 200]{sae3}.
Technical difficulties in approaching this problem arise from the fact that Eliashberg's $h$-principle \cite{eli} cannot be used when constructing fold maps which are exposed to index constraints.
The results of this paper can be considered as a partial solution to Saeki's problem for the case of fold maps into $\mathbb{R}^{2}$ that are subject to our type of index constraints.

\par\medskip
In the following discussion of our main results the focus lies on oriented bordism groups; unoriented versions of the results hold in an analogous way, and some details are pointed out in \Cref{remark unoriented version for constrained generic bordism and connective bordism} and \Cref{remark unoriented version for bordism of constrained Morse functions}.

Besides the oriented $n$-dimensional bordism group of $k$-constrained Morse functions, which will be denoted by $\mathcal{M}_{k}^{n}$ (see \Cref{definition constrained bordism}), our two main results below involve the oriented $k$-connective $n$-dimensional bordism group $\mathcal{C}_{k}^{n}$ as reviewed in \Cref{connective bordism}, as well as the group $\mathcal{G}_{k}^{n}$ of oriented $k$-constrained generic $n$-dimensional bordism (see \Cref{definition constrained generic bordism}).
These two bordism groups both interpolate between the smooth oriented bordism group and the group of homotopy spheres (see \Cref{remark interpolation of generic constrained bordism}).

In our first main result we extend Saeki's result \cite{sae2} on bordism groups of special generic functions to constrained Morse functions by relating groups of constrained generic bordism to connective bordism groups as follows.

\begin{theorem}\label{theorem constrained generic bordism and connective bordism}
Let $n \geq 6$ and $1 \leq k < n$ be integers.
Then, there exist homomorphisms as follows:
\begin{enumerate}[(i)]
\item $\varepsilon_{k}^{n} \colon \mathcal{C}_{k}^{n} \rightarrow \mathcal{G}_{k}^{n}$, $[M^{n}] \mapsto [f]$, where $f \colon M^{n} \rightarrow \mathbb{R}$ denotes an arbitrarily chosen $k$-constrained Morse function, and
\item for $k > 1$, $\delta_{k}^{n} \colon \mathcal{G}_{k}^{n} \rightarrow \mathcal{C}_{k-1}^{n}$, $[f \colon M^{n} \rightarrow \mathbb{R}] \mapsto [\sharp(M^{n})]$, where $\sharp(M^{n})$ denotes the oriented connected sum of the connected components of $M^{n}$. (We use the convention that $\sharp (\emptyset) = S^{n}$.)
\end{enumerate}
Moreover, for $1 < k < n$, the natural homomorphism $\mathcal{C}_{k}^{n} \rightarrow \mathcal{C}_{k-1}^{n}$, $[M^{n}] \mapsto [M^{n}]$, factors as the composition $\delta^{n}_{k} \circ \varepsilon^{n}_{k}$,
and the natural homomorphism $\mathcal{G}_{k}^{n} \rightarrow \mathcal{G}_{k-1}^{n}$, $[f \colon M^{n} \rightarrow \mathbb{R}] \mapsto [f]$, factors as the composition $\varepsilon^{n}_{k-1} \circ \delta^{n}_{k}$.
\end{theorem}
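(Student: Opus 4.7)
My plan is to construct the two homomorphisms and verify the two factorizations in sequence, drawing on the four tools cited earlier in the introduction: Levine's cusp elimination, Stein factorization, the Hatcher--Wagoner two-index theorem, and the Gay--Kirby handle extension theorem.

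\emph{Construction of $\varepsilon_k^n$.} Given $[M]\in\mathcal{C}_k^n$, the Morse-theoretic observation of the introduction (valid since $n\geq 6$, in particular $n\neq 4$) provides a $k$-constrained Morse function $f$ on $M$, so the assignment $[M]\mapsto[f]$ is defined on representatives. Well-definedness reduces to two independence checks. First, for two $k$-constrained Morse functions $f_0,f_1$ on the same $M$, I interpolate generically on $M\times[0,1]$, view the result as a map to the plane, remove its cusps by Levine's technique, and then apply the two-index theorem to cancel in pairs the remaining fold components whose absolute index lies outside $\{k,\ldots,n-k\}$. Second, for a $(k-1)$-connective bordism $W$ between representatives, I choose a relative handle decomposition of $W$ using handles whose indices lie in $\{k,\ldots,n+1-k\}$ (available by the $(k-1)$-connectivity hypothesis together with handle trading), and use Gay--Kirby inductively to extend $f_0\sqcup f_1$ across each handle by a fold map of admissible absolute index.

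\emph{Construction of $\delta_k^n$.} For $k\geq 2$, every connected component of the domain $M$ of $(M,f)\in\mathcal{G}_k^n$ carries a $k$-constrained Morse function and is therefore $(k-1)$-connected, so the oriented connected sum $\sharp(M)$ is a well-defined closed $(k-1)$-connected manifold and represents a class in $\mathcal{C}_{k-1}^n$. For bordism invariance, given a $k$-constrained generic bordism $(W,F)$ between $(M_0,f_0)$ and $(M_1,f_1)$, I construct a $(k-1)$-connective bordism from $\sharp(M_0)$ to $\sharp(M_1)$ by attaching interior $1$-handles to $W$ that merge the various boundary components on each side; the Stein factorization of $F$ furnishes the topological control needed to check that the resulting $(n+1)$-manifold is $(k-1)$-connective, the assumption $k\geq 2$ on the absolute fold indices of $F$ being essential.

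\emph{Factorizations and main obstacle.} The first factorization is immediate: for $k\geq 2$ any $(k-1)$-connected $M$ is connected, so $\sharp(M)=M$ and $\delta_k^n\circ\varepsilon_k^n$ coincides with the forgetful map $\mathcal{C}_k^n\to\mathcal{C}_{k-1}^n$. The second factorization requires exhibiting a $(k-1)$-constrained generic bordism from $(M,f)$ to $(\sharp(M),g)$ for an arbitrary $(k-1)$-constrained Morse function $g$ on $\sharp(M)$. For $k=2$ the connected-sum trace (a $1$-handle cobordism, whose associated fold lines have absolute index $1$, admissible in the $1$-constrained target) does the job directly; for $k\geq 3$ the naive $1$-handle cobordism introduces fold components of forbidden absolute index, and the desired bordism must instead be assembled by combining the $1$-handle trace with cancelling pairs introduced via Gay--Kirby and then simplifying by cusp elimination and the two-index theorem until only fold components of absolute index in $\{k-1,\ldots,n-k+1\}$ remain. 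I expect this last construction, together with the bordism invariance in Step 2, to be the main obstacle, since it is precisely where all four technical tools must be orchestrated simultaneously while preserving orientation and the fold structure compatible with the Stein factorization.
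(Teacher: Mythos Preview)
Your overall architecture matches the paper's, and you correctly identify the four tools and where they enter. Two substantive corrections are needed, however.

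\textbf{The second factorization is not the obstacle.} The connected-sum trace $W$ from $\sharp(M)$ to $M$ can be equipped with a $k$-constrained (not merely $(k-1)$-constrained) bordism $G\colon W\to\mathbb{R}^2$: one performs the $1$-handle attachments near the definite critical points of $f$, so the new fold lines are \emph{definite} (absolute index $n$), which is always allowed. This is Lemma~\ref{lemma epsilon homomorphism}, a direct generalization of Saeki's lemma. Hence for every $k\ge 2$ you already have $[f]=[g]$ in $\mathcal{G}_{k-1}^n$ (indeed in $\mathcal{G}_k^n$) once you know any two $(k-1)$-constrained Morse functions on $\sharp(M)$ are $(k-1)$-constrained generic bordant, and that last step is exactly the cylinder argument via the two-index theorem you already sketched. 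Your proposed ``cancelling pairs via Gay--Kirby and simplifying by cusp elimination and two-index'' for $k\ge 3$ is unnecessary; the worry about forbidden fold indices from the $1$-handles is unfounded.

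\textbf{The real work is in $\delta_k^n$, and your sketch there has a gap.} Attaching interior $1$-handles to a $k$-constrained generic bordism $W$ does give a cobordism from $\sharp(M_0)$ to $\sharp(M_1)$, but nothing you wrote explains why the result is $(k-1)$-connected --- a priori $W$ need not even be simply connected. The paper's mechanism is more specific: because $k\ge 2$, Stein factorization $W_F$ is a compact surface with corners; after making the domain connected (via Lemma~\ref{lemma epsilon homomorphism} again) one realizes $W_F$ as a half-disc with $1$-handles, cuts out a sub-bordism $V\subset W$ whose Stein image is a rectangle, and then the projection $V\to V_F\cong[0,1]^2\to[0,1]$ is (after perturbation) a $k$-constrained Morse function on $V$ with the correct boundary behaviour. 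It is \emph{this} Morse function that certifies $(k-1)$-connectivity of $V$, and the complementary pieces contribute only $[P_i]-[P_i]=0$. Your phrase ``Stein factorization furnishes the topological control'' points at the right tool but skips the step that actually delivers the connectivity.

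Two minor slips: the bordism defining equality in $\mathcal{C}_k^n$ is $k$-connected, not $(k-1)$-connected, and correspondingly the handle indices on $W^{n+1}$ can be taken in $\{k+1,\dots,n-k\}$ (this is what Theorem~\ref{theorem handle extension} requires); and the two-index theorem is applied to a generic path of functions to push indices into $\{k,\dots,n-k\}$, not to a fold map after cusp elimination --- cusps are allowed in $\mathcal{G}_k^n$, so there is no need to remove them at that stage.
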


The two-index theorem of Hatcher and Wagoner \cite{hatwag} (see \Cref{two-index theorem}) will serve as an essential tool for showing that the homomorphism of part $(i)$ is well-defined
in the case $1 < k < n/2$ (see the proof of \Cref{existence of constrained bordisms}).
Furthermore, we will exploit a handle extension theorem for constrained Morse functions that has recently been established by Gay and Kirby \cite{gaykir} in the context of symplectic geometry (see \Cref{handle extension theorem}).
In order to show that the homomorphism of part $(ii)$ is well-defined, we use Stein factorization for generic maps into the plane which are subject to certain fold index constraints (see \Cref{stein factorization}).

Our second main result (see \Cref{theorem computation of oriented bordism of constrained Morse functions} below) reveals that bordism groups of constrained Morse functions have a structure similar to that of bordism groups of Morse functions \cite{ike}, whereas a somewhat surprising phenomenon arises in dimensions of the form $n \equiv 3 \, \operatorname{mod} 4$ (see parts $(iii)$ and $(iv)$).
Namely, the size of the group $\mathcal{M}_{k}^{n}$ is governed by an integer $\kappa_{(n+1)/4}$ that measures the existence of closed $k$-constrained bordisms of dimension $n+1$ with odd Euler characteristic (see \Cref{definition sequence existence of odd euler characteristic and constrained generic map}).
Furthermore, the techniques that are used in the proof of \Cref{theorem constrained generic bordism and connective bordism} allow us to relate the sequence $\kappa_{1}, \kappa_{2}, \dots$ to another sequence $\gamma_{1}, \gamma_{2}, \dots$ of positive integers measuring the existence of highly-connected closed manifolds with odd Euler characteristic (see \Cref{definition sequence gamma}).

\begin{theorem}\label{theorem computation of oriented bordism of constrained Morse functions}
Let $n \geq 4$ and $1 < k \leq n/2$ be integers.
The oriented $n$-dimensional bordism group of $k$-constrained Morse functions $\mathcal{M}_{k}^{n}$ fits into a short exact sequence of abelian groups
\begin{displaymath}
0 \rightarrow A^{n}_{k} \stackrel{\alpha_{k}^{n}}{\longrightarrow} \mathcal{M}_{k}^{n} \stackrel{\beta_{k}^{n}}{\longrightarrow} \mathcal{G}_{k}^{n} \oplus \mathbb{Z}^{\oplus \lfloor n/2 \rfloor - k} \rightarrow 0,
\end{displaymath}
where the homomorphisms $\alpha_{k}^{n}$ and $\beta_{k}^{n}$ are defined in \Cref{lemma definition of homomorphism alpha} and \Cref{lemma surjectivity of beta}, respectively.
We have either $A^{n}_{k} = 0$ or $A^{n}_{k} = \mathbb{Z}/2$, depending on the following cases:
\begin{enumerate}[$(i)$]
\item If $n$ is even, then $A^{n}_{k} = 0$, so $\beta_{k}^{n}$ is an isomorphism.
\item If $n \equiv 1 \, \operatorname{mod} 4$, then $A^{n}_{k} = \mathbb{Z}/2$, and $\alpha_{k}^{n}$ admits a splitting (see \Cref{existence of a splitting}).
\item If $n \equiv 3 \, \operatorname{mod} 4$ and $k \leq \kappa_{(n+1)/4}$, then $A^{n}_{k} = 0$, so $\beta_{k}^{n}$ is an isomorphism.
\item If $n \equiv 3 \, \operatorname{mod} 4$ and $k > \kappa_{(n+1)/4}$, then $A_{k}^{n} = \mathbb{Z}/2$.
\end{enumerate}
Furthermore, the constants $\kappa_{(n+1)/4}$ and $\gamma_{(n+1)/4}$ (see \Cref{definition sequence existence of odd euler characteristic and constrained generic map} and \Cref{definition sequence gamma}, respectively) are related by $\gamma_{(n+1)/4} \leq \kappa_{(n+1)/4} \leq \gamma_{(n+1)/4} + 1$.
\end{theorem}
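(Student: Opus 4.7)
The plan is to imitate Ikegami's computation of $\mathcal{M}^n_1$ from \cite{ike}, with the index-constraint refinements coming from the tools assembled earlier in the paper. The homomorphism $\beta_k^n$ should be $[f]\mapsto([f],\mathbf{c}(f))$, where the first coordinate is the forgetful map $\mathcal{M}_k^n\to\mathcal{G}_k^n$, and $\mathbf{c}(f)\in\mathbb{Z}^{\lfloor n/2\rfloor-k}$ records signed counts of critical points of indefinite Morse indices ranging strictly below the middle. That these counts are $k$-constrained bordism invariants will follow from the observation that, in a one-parameter family, any cancellation of critical points occurs between adjacent indices, and the $k$-constraint rules out cancellations reaching below $k$; the finitely many remaining relations are precisely those absorbed into $[f]\in\mathcal{G}_k^n$. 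Surjectivity of $\beta_k^n$ will be established by a converse geometric construction: given a representative generic map and a prescribed integer vector, the Gay-Kirby handle extension theorem (\Cref{handle extension theorem}) permits attaching the appropriate $k$-constrained handles to realize the prescribed counts.

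To identify $A_k^n=\ker\beta_k^n$, one takes a constrained Morse function in the kernel and simplifies it by a $k$-constrained bordism until all critical points of indefinite index strictly below the middle have been canceled in pairs. Cerf theory together with the Hatcher-Wagoner two-index theorem (\Cref{two-index theorem}) provides the unknotting of cancellation paths compatible with the constraint. One is then reduced to a normal form carrying at most one pair of middle-index critical points, whose bordism class is governed by a Kervaire-style semicharacteristic obstruction exactly as in \cite{ike} and \cite{ker}. For even $n$ this semicharacteristic vanishes modulo $2$ by Poincar\'{e} duality, giving case $(i)$. For $n\equiv 1\pmod{4}$ the obstruction is genuinely nontrivial, and the splitting in case $(ii)$ is constructed from a standard model such as $S^{(n-1)/2}\times S^{(n+1)/2}$ equipped with its product Morse function.

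The principal obstacle, and the essentially new content compared to \cite{ike}, is the dichotomy in cases $(iii)$ and $(iv)$: when $n\equiv 3\pmod{4}$, the $\mathbb{Z}/2$ obstruction is killed precisely when there exists a closed $k$-constrained bordism of dimension $n+1$ with odd Euler characteristic, since such a bordism can be used as a filling that flips the semicharacteristic. By construction this is exactly the threshold encoded by $\kappa_{(n+1)/4}$, producing case $(iii)$ when $k\leq \kappa_{(n+1)/4}$ and case $(iv)$ when $k>\kappa_{(n+1)/4}$. The delicate half is the ``only if'' direction: we must rule out that some subtler, non-closed $k$-constrained null-bordism kills the obstruction when no closed filling exists, and here the Stein factorization for generic maps into the plane (\Cref{stein factorization}) will be used to cut any putative null-bordism along its middle level set and reduce the question to the closed case.

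Finally, the sandwich $\gamma_{(n+1)/4}\leq \kappa_{(n+1)/4}\leq \gamma_{(n+1)/4}+1$ follows from the characterization recorded in the introduction that, in dimensions other than $4$, a closed manifold admits a $k$-constrained Morse function if and only if it is $(k-1)$-connected. The lower bound $\gamma\leq\kappa$ is immediate: any $(k-1)$-connected closed $(n+1)$-manifold of odd Euler characteristic carries a $k$-constrained Morse function. The upper bound $\kappa\leq\gamma+1$ uses that a closed $k$-constrained bordism is automatically $(k-1)$-connected in the generic range, with the $+1$ slack accommodating the exceptional dimension $n+1=4$ where Perelman's theorem is invoked and at most one connectivity level may be sacrificed; \Cref{theorem constrained generic bordism and connective bordism} provides the bridge between the two frameworks that makes this comparison uniform.
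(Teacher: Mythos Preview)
Your broad outline tracks the paper's, but several load-bearing steps are misidentified, and the final inequality is argued incorrectly.

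The bridge between $\mathcal{M}_k^n$ and $\mathcal{G}_k^n$ is Levine's cusp elimination (\Cref{proposition creating and eliminating cusps} and \Cref{proposition euler characteristic and cusps}), which you never invoke. The paper does not simplify $f$ to a normal form via Cerf/two-index moves. Rather, if $[f]\in\ker\beta_k^n$ then by hypothesis there already exists a $k$-constrained \emph{generic} bordism $G\colon W^{n+1}\to\mathbb{R}^2$ from $f$ to $f_\emptyset$; since $\Phi_k^n([f])=0$, the only obstruction to removing the cusps of $G$ in pairs is the parity of the cusp count, which \Cref{proposition euler characteristic and cusps} reads off from $\chi(W)$ and the number of critical points of $f$. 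For $n$ even this already gives $(i)$; for $n$ odd one corrects the parity, if necessary, by adjoining a generic null-bordism $G_\alpha\colon D^{n+1}\to\mathbb{R}^2$ of the model $f_\alpha\colon S^n\to\mathbb{R}$ with four critical points, yielding $\operatorname{im}\alpha_k^n=\ker\beta_k^n$. The splitting in case $(ii)$ is the Kervaire semi-characteristic homomorphism $\Lambda_k^n$; your proposed model $S^{(n-1)/2}\times S^{(n+1)/2}$ with its product Morse function in fact has $\Lambda_k^n=0$ (both $\sigma(f)$ and the rational semi-characteristic equal $2$), so it represents the trivial class, not the generator. Likewise, surjectivity of $\beta_k^n$ needs only births of adjacent-index critical pairs, not Gay--Kirby; and cases $(iii)$/$(iv)$ require no Stein factorization, only capping a putative null-bordism of $f_\alpha$ with $D^{n+1}$ and comparing cusp parity with $\chi$.

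Your account of $\gamma_{(n+1)/4}\le\kappa_{(n+1)/4}\le\gamma_{(n+1)/4}+1$ is wrong on both sides. The lower bound is not immediate: $\kappa$ concerns $k$-constrained generic maps into $\mathbb{R}^2$, not Morse functions into $\mathbb{R}$, and constructing such a map on a $\gamma$-connected closed $V^{n+1}$ is precisely the content of \Cref{existence of constrained bordisms} (this is where the two-index theorem and the handle extension theorem actually enter the proof). The $+1$ in the upper bound has nothing to do with dimension $4$ or Perelman; it comes from the Stein-factorization argument of \Cref{lemma delta well-defined}, which from a closed $\kappa$-constrained generic bordism extracts only a $(\kappa-1)$-connected closed manifold (with Euler-characteristic parity preserved after bookkeeping), giving $\gamma\ge\kappa-1$.
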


As pointed out in Remark 3.6 of \cite[p. 296]{sae2}, index constraints for fold singularities seem to be important for understanding global differential topological phenomena like smooth structures on manifolds.
As a new instance of this principle we apply our results in \Cref{application to Kervaire spheres} to show how bordism groups of constrained Morse functions can detect individual exotic smooth structures on spheres.
More specifically, \Cref{theorem kervaire spheres} states that, in infinitely many dimensions of the form $n \equiv 1 \, \operatorname{mod} 4$, the exotic Kervaire $n$-sphere can be characterized among all exotic $n$-spheres by the property that it admits a $(n-1)/2$-constrained Morse function representing $0 \in \mathcal{M}_{k}^{n}$.
Results of this type are relevant in the context of a concrete positive TFT which has recently been constructed by Banagl (see Section 10 of \cite{ban}), and in \cite{wra} we use them to compute Banagl's aggregate invariant of various exotic spheres.

\subsection*{Notation}
All manifolds and maps considered in this paper are differentiable of class $C^{\infty}$.
If $M^{n}$ is a manifold with boundary, then we write $\operatorname{int} M^{n} = M^{n} \setminus \partial M^{n}$ for its interior.
For an oriented manifold $M^{n}$ the manifold equipped with the opposite orientation will be denoted by $-M^{n}$.
Writing $||x||^{2} := x_{1}^{2} + \dots + x_{n}^{2}$ for $x = (x_{1}, \dots, x_{n}) \in \mathbb{R}^{n}$, we denote the unit $n$-disc of radius $r$ by $D_{r}^{n} = \{x \in \mathbb{R}^{n}; \; ||x|| \leq r\}$.
The symbol $\cong$ will either mean orientation preserving diffeomorphism of manifolds or isomorphism of groups.

\subsection*{Acknowledgements}
Many parts of the present paper originate from the author's Heidelberg PhD thesis, and the author would like to express his deep gratitude to his supervisor Professor Markus Banagl for inspiring guidance during the creation of this work.
Moreover, the author would like to thank Professor Osamu Saeki for helpful discussions.

The author is grateful to the German National Merit Foundation (Studienstiftung des deutschen Volkes) for financial support.
Moreover, the author has been supported by JSPS KAKENHI Grant Number JP17H06128.

\section{Connective Bordism}\label{connective bordism}

\Cref{theorem constrained generic bordism and connective bordism} relates constrained generic bordism groups (see \Cref{definition constrained generic bordism}) to connective bordism groups defined as follows (compare \cite[Example 17, p. 51]{sto}).

\begin{definition}\label{definition connective bordism}
Fix integers $n \geq 2$ and $1 \leq k < n$.
Let $M^{n}$ and $N^{n}$ be non-empty $k$-connected oriented closed $n$-manifolds.
An \emph{oriented $k$-connective bordism from $M^{n}$ to $N^{n}$} is a $k$-connected oriented compact manifold $W^{n+1}$ with boundary $\partial W^{n+1} = M^{n} \sqcup -N^{n}$.

The \emph{oriented $k$-connective $n$-bordism group $\mathcal{C}_{k}^{n}$} is the set of equivalence classes $[M^{n}]$ of non-empty $k$-connected oriented closed $n$-manifolds $M^{n}$ subject to the equivalence relation of oriented $k$-connective bordism.
\end{definition}

It can be shown that $\mathcal{C}_{k}^{n}$ is for any $1 \leq k < n$ an abelian group with group law induced by oriented connected sum, $[M^{n}] + [N^{n}] := [\sharp(M^{n} \sqcup N^{n})]$, identity element represented by the standard sphere $S^{n}$, and inverses induced by reversing the orientation, $- [M^{n}] = [-M^{n}]$.
Moreover, using the characterization of $h$-cobordisms given in \cite[p. 108]{mil2}, one can show that $\mathcal{C}_{k}^{n}$ coincides for $k > (n-1)/2$ with the group of homotopy spheres $\Theta_{n}$ as defined in \cite{KM}.

\begin{proposition}\label{proposition bott periodicity}
Let $1 < k \leq (n-1)/2$.
The natural homomorphism $\mathcal{C}_{k}^{n} \rightarrow \mathcal{C}_{k-1}^{n}$, $[M^{n}] \mapsto [M^{n}]$, is injective for $k \equiv 3, 5, 6, 7 \, \operatorname{mod} 8$.
\end{proposition}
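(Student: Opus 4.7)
The plan is to show that the kernel of the natural map is trivial by a direct surgery argument. Given $[M] \in \mathcal{C}_{k}^{n}$ mapping to $0$ in $\mathcal{C}_{k-1}^{n}$, I would choose a $(k-1)$-connective bordism $W^{n+1}$ from $M$ to $S^{n}$, and modify $W$ rel boundary into a $k$-connective bordism by surgery in its interior. This reduces the proposition to: every $(k-1)$-connected compact oriented $(n+1)$-manifold with boundary can be made $k$-connected via interior surgery on embedded $k$-spheres, provided the index $k$ lies in the stated congruence classes.

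Since $k \geq 2$ and $W$ is $(k-1)$-connected, $W$ is simply connected, so Hurewicz gives $\pi_{k}(W) \cong H_{k}(W;\mathbb{Z})$, which is finitely generated by compactness. I would represent a finite generating set by smoothly embedded $k$-spheres in $\operatorname{int} W$, which is possible by Whitney's embedding theorem since $\dim W = n+1 \geq 2k+2$ (the hypothesis $k \leq (n-1)/2$ gives $n \geq 2k+1$). The normal bundle of such an embedded $S^{k}$ is a rank $(n+1-k)$ real vector bundle over $S^{k}$, classified by $\pi_{k-1}(O(n+1-k))$. As $n+1-k \geq k+2$, we are in the stable range, so this group coincides with $\pi_{k-1}(O)$.

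Here Bott periodicity of the stable orthogonal group enters as the crucial input: $\pi_{k-1}(O)$ vanishes precisely when $k-1 \equiv 2, 4, 5, 6 \pmod{8}$, i.e., when $k \equiv 3, 5, 6, 7 \pmod{8}$. Under this hypothesis every such normal bundle is trivial, so I can perform surgery along each embedded $k$-sphere in the interior of $W$, preserving both the boundary and the orientation. Each surgery exchanges a $k$-sphere for a dual $(n-k)$-sphere; since $n-k > k$ (again from $k \leq (n-1)/2$), the dual handle has codimension exceeding $k$, so it neither destroys $(k-1)$-connectivity nor creates new classes in $\pi_{k}$. Iterating over a generating set of $\pi_{k}(W)$ produces a $k$-connected $W'$ with $\partial W' = M \sqcup -S^{n}$, which witnesses $[M] = 0$ in $\mathcal{C}_{k}^{n}$.

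The main obstacle is the bookkeeping ensuring that each successive surgery genuinely reduces the rank of a chosen generating set of $\pi_{k}$, rather than creating new $k$-dimensional classes via the dual handle; this is standard provided the codimension of the dual core exceeds $k$, which the inequality $k \leq (n-1)/2$ delivers. Beyond this, the only substantive geometric ingredient is the Bott-periodic vanishing of $\pi_{k-1}(O)$ in the four listed residue classes modulo $8$, which fully accounts for the arithmetic condition stated in the proposition.
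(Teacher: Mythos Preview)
Your proof is correct and takes essentially the same approach as the paper: both argue by surgery on embedded $k$-spheres in the interior of a $(k-1)$-connected nullbordism, with the obstruction to performing the surgery vanishing by Bott periodicity in the stated congruence classes. The paper phrases the obstruction as the passage from $(k-1)$-parallelizability to $k$-parallelizability of $TW$ and then invokes Milnor's surgery theorem directly, whereas you spell out the normal-bundle triviality and the below-middle-dimension bookkeeping by hand, but these are the same argument.
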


\begin{proof}
If $M^{n}$ represents $0 \in \mathcal{C}_{k-1}^{n}$, then there exists an oriented $(k-1)$-connected compact manifold $W^{n+1}$ with boundary $\partial W^{n+1} = M^{n}$.
Note that any triangulation of $W^{n+1}$ is $(k-1)$-parallelizable, that is, $TW$ is trivial over the $(k-1)$-skeleton (see \cite[Section 5, p. 49]{mil}).
The obstruction for being $k$-parallelizable vanishes since $\pi_{k-1}(SO(n)) = 0$ for $k \equiv 3, 5, 6, 7 \, \operatorname{mod} 8$ (see the proof of \cite[Theorem 3.1, p. 508]{KM}).
Therefore, by \cite[Theorem 3, p. 49]{mil} $W$ can be made $\operatorname{min}\{k, \lfloor \operatorname{dim}W^{n+1}/2 - 1\rfloor\} = k$-connected by a finite sequence of surgeries without changing $M^{n} = \partial W^{n+1}$.
Hence, if $M^{n}$ happens to be $k$-connected, then $M^{n}$ represents $0 \in \mathcal{C}_{k}^{n}$.
\end{proof}

\par\medskip

Poincar\'{e} duality implies that orientable closed manifolds with odd Euler characteristic can only exist in dimensions which are a multiple of $4$.
For instance, $\mathbb{C}P^{2i}$ is for any integer $i \geq 1$ a simply connected closed $4i$-manifold with odd Euler characteristic.
We define a sequence $\gamma_{1}, \gamma_{2}, \dots$ of positive integers as follows (compare Problem 2.6 in \cite[p. 151]{dovschu}).

\begin{definition}\label{definition sequence gamma}
For every integer $i \geq 1$ let $\gamma_{i}$ be the greatest integer $k \geq 1$ for which there exists a $k$-connected closed manifold $V^{4i}$ with odd Euler characteristic (or, equivalently, odd signature).
\end{definition}

By an argument analogous to the proof of \Cref{proposition bott periodicity}, we can show that $\gamma_{i} \not\equiv 2, 4, 5, 6 \, \operatorname{mod} 8$ for all $i \geq 1$.

\begin{example}
Note that $\gamma_{i} < 2i$ because $2i$-connected closed $4i$-manifolds are homotopy spheres.
For odd $i$, we always have $\gamma_{i} = 1$ because $2$-connected closed $4i$-manifolds $V^{4i}$ are spinable, which implies that their signature is a multiple of $16$ according to Ochanine's generalization of Rochlin's theorem (see \cite[p. 133]{och}).
For even $i$, we have $\gamma_{i} \geq 3$ because the quaternionic projective space $\mathbb{H}P^{i}$ has odd Euler characteristic.
In particular, $\gamma_{2} = 3$.
When $i = 4j$ is a multiple of $4$, then $\gamma_{i} \geq 7$ because the $j$-fold product $\mathbb{O}P^{2} \times \dots \times \mathbb{O}P^{2}$ of the octonionic projective plane $\mathbb{O}P^{2}$ is a $7$-connected closed manifold with odd Euler characteristic.
In particular, $\gamma_{4} = 7$.
\end{example}

\section{Preliminaries on Generic Maps into the Plane}

In this section we collect essential techniques for constructing and studying generic maps from bordisms into the plane.

Fix an integer $n \geq 1$.
Recall that any smooth map of a manifold $X^{n+1}$ into the plane can be approximated arbitrarily well in the Whitney $C^{\infty}$ topology by a smooth map $G \colon X^{n+1} \rightarrow \mathbb{R}^{2}$ whose singular locus $S(G) = \{x \in X^{n+1}; \; \operatorname{rank} d_{x}G < 2\}$ consists of those $x \in X^{n+1}$ admitting coordinate charts centered at $x$ and $G(x)$, respectively, in which $G$ has one of the following normal forms:
\begin{enumerate}[(1)]
\item $(t, x_{1}, \dots, x_{n}) \mapsto (t, t x_{1} + x_{1}^3 \pm x_{2}^{2} \pm \dots \pm x_{n}^{2})$, i.e., $x$ is a \emph{cusp} of $G$.
\item $(t, x_{1}, \dots, x_{n}) \mapsto (t, \pm x_{1}^{2} \pm \dots \pm x_{n}^{2})$, i.e., $x$ is a \emph{fold point} of $G$.
\end{enumerate}

\begin{definition}\label{definition constrained generic bordism}
Let $f \colon M^{n} \rightarrow \mathbb{R}$ and $g \colon N^{n} \rightarrow \mathbb{R}$ be $k$-constrained Morse functions on oriented closed $n$-manifolds.
An \emph{oriented $k$-constrained generic bordism from $f$ to $g$} is an oriented bordism $W^{n+1}$ from $M^{n}$ to $-N^{n}$ equipped with a generic map $G \colon W^{n+1} \rightarrow \mathbb{R}^{2}$ such that
\begin{enumerate}[$(i)$]
\item there exist tubular neighborhoods $M^{n} \times [0, \varepsilon) \subset W^{n+1}$ of $M^{n} \times \{0\} = M^{n} \subset W^{n+1}$ and $N^{n} \times (1-\varepsilon, 1] \subset W^{n+1}$ of $N^{n} \times \{1\} = N^{n} \subset W^{n+1}$ such that
\begin{displaymath}
G|_{M^{n} \times [0, \varepsilon)} = f \times \operatorname{id}_{[0, \varepsilon)}, \qquad G|_{N^{n} \times (1-\varepsilon, 1]} = g \times \operatorname{id}_{(1-\varepsilon, 1]}.
\end{displaymath}
\item all absolute indices of fold points of $G$ are contained in $\{\lceil n/2\rceil, \dots, n-k\} \cup \{n\}$.
\end{enumerate}

The \emph{oriented $k$-constrained generic $n$-bordism group $\mathcal{G}_{k}^{n}$} is the set of equivalence classes $[f]$ of $k$-constrained Morse functions $f \colon M^{n} \rightarrow \mathbb{R}$ on oriented closed $n$-manifolds subject to the equivalence relation of oriented $k$-constrained generic bordism.
\end{definition}

\begin{definition}\label{definition constrained bordism}
Let $f \colon M^{n} \rightarrow \mathbb{R}$ and $g \colon N^{n} \rightarrow \mathbb{R}$ be $k$-constrained Morse functions on oriented closed $n$-manifolds.
An \emph{oriented $k$-constrained bordism from $f$ to $g$} is an oriented $k$-constrained generic bordism from $f$ to $g$ without cusps.

The \emph{oriented $n$-bordism group of $k$-constrained Morse functions $\mathcal{M}_{k}^{n}$} is the set of equivalence classes $[f]$ of $k$-constrained Morse functions $f \colon M^{n} \rightarrow \mathbb{R}$ on oriented closed $n$-manifolds subject to the equivalence relation of oriented $k$-constrained bordism.
\end{definition}

Note that $\mathcal{G}_{k}^{n}$ and $\mathcal{M}_{k}^{n}$ are abelian groups.
In both cases, the group law is induced by disjoint union, $[f \colon M^{n} \rightarrow \mathbb{R}] + [g \colon N^{n} \rightarrow \mathbb{R}] := [f \sqcup g \colon M^{n} \sqcup N^{n} \rightarrow \mathbb{R}]$, the identity element is represented by the unique map $f_{\emptyset} \colon \emptyset \rightarrow \mathbb{R}$, and the inverse of $[f \colon M^{n} \rightarrow \mathbb{R}]$ is given by $[-f \colon -M^{n} \rightarrow \mathbb{R}]$. \par\medskip

There are natural homomorphisms $\mathcal{G}_{l}^{n} \rightarrow \mathcal{G}_{k}^{n}$ and $\mathcal{M}_{l}^{n} \rightarrow \mathcal{M}_{k}^{n}$ whenever $l \geq k$.
Moreover, there is a natural homomorphism $\mathcal{M}_{k}^{n} \rightarrow \mathcal{G}_{k}^{n}$ which maps the class $[f \colon M^{n} \rightarrow \mathbb{R}] \in \mathcal{M}_{k}^{n}$ to the class $[f \colon M^{n} \rightarrow \mathbb{R}] \in \mathcal{G}_{k}^{n}$.

\begin{remark}\label{G and M coincide for large k}
By definition, the groups $\mathcal{G}_{k}^{n}$ and $\mathcal{M}_{k}^{n}$ coincide for $k > n/2$ both with the oriented bordism group of special generic functions on $n$-manifolds $\widetilde{\Gamma}(n, 1)$ as defined in \cite{sae2}.
\end{remark}

\begin{remark}\label{remark interpolation of generic constrained bordism}
Varying $k$, the group $\mathcal{G}_{k}^{n}$ interpolates between the smooth oriented bordism group $\Omega_{n}^{SO}$ (an isomorphism $\mathcal{G}_{1}^{n} \stackrel{\cong}{\longrightarrow} \Omega_{n}^{SO}$ is given by $[f \colon M^{n} \rightarrow \mathbb{R}] \mapsto [M^{n}]$) and, by \cite{sae2}, the group of homotopy spheres $\Theta_{n} \cong \widetilde{\Gamma}(n, 1) = \mathcal{G}_{k}^{n}$ ($k > n/2$).
\end{remark}

\subsection{Elimination of Cusps; Cusps and Euler Characteristic}\label{section cusps}
We refer to \cite{ike} for a detailed discussion of the material presented in this section.

Recall from \cite[Definition 2.2, p. 213]{ike} that there exist homomorphisms
\begin{displaymath}
\widetilde{\varphi}_{\lambda} \colon \mathcal{M}_{1}^{n} \rightarrow \mathbb{Z}, \quad [f] \mapsto C_{\lambda}(f) - C_{n-\lambda}(f), \qquad \lambda \in \{0, \dots, n\},
\end{displaymath}
where $C_{\mu}(f)$ denotes the number of critical points of $f$ of Morse index $\mu$.
For any integer $1 < k \leq n/2$ we use the natural homomorphism $\mathcal{M}_{k}^{n} \rightarrow \mathcal{M}_{1}^{n}$ to define a homomorphism (compare \cite[Definition 2.3, p. 213]{ike})
\begin{displaymath}
\Phi_{k}^{n} \colon \mathcal{M}_{k}^{n} \rightarrow \mathbb{Z}^{\oplus \lfloor n/2 \rfloor - k}, \quad [f] \mapsto (\widetilde{\varphi}_{\lfloor (n+3)/2\rfloor}([f]), \dots, \widetilde{\varphi}_{n-k}([f])).
\end{displaymath}

Levine's technique \cite{lev} for eliminating pairs of cusps of generic maps into the plane (see also \cite[Section 3, pp. 215ff]{ike}) can be used as in \cite{ike} to prove the following

\begin{theorem}\label{proposition creating and eliminating cusps}
Suppose that $n \geq 2$.
Let $G \colon W^{n+1} \rightarrow \mathbb{R}^{2}$ be an oriented $k$-constrained generic bordism from $g_{0} \colon M_{0}^{n} \rightarrow \mathbb{R}$ to $g_{1} \colon M_{1}^{n} \rightarrow \mathbb{R}$.
Suppose that $\Phi_{k}^{n}([g_{0}]) = \Phi_{k}^{n}([g_{1}])$.
Moreover, if $n$ is odd, then suppose that $G$ has an even number of cusps.
Then, $[g_{0}] = [g_{1}] \in \mathcal{M}_{k}^{n}$.
\end{theorem}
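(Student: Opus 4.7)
The plan is to follow Ikegami's proof of the analogous statement for $k=1$ in \cite[Section 3]{ike}, suitably adapted to respect the $k$-constraint. The engine of the argument is Levine's elimination technique \cite{lev}: a pair of cusps of the same absolute index and opposite sign, lying on a common arc of fold points of that absolute index, can be canceled by a homotopy of generic maps supported in a small tubular neighborhood of the arc, involving only fold points of that specific absolute index. Consequently, whenever the connecting arc lies in the admissible absolute-index range $\{\lceil n/2\rceil, \dots, n-k\}$, the homotopy preserves the $k$-constraint automatically. The proof therefore reduces to two tasks: (i) using the hypotheses to pair up the cusps of $G$ into opposite-sign pairs of matching absolute index, and (ii) geometrically arranging each pair on a common admissible fold arc so that Levine's move applies.

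For task (i), I would sweep across $W^{n+1}$ by horizontal slices and record the jumps of $\widetilde{\varphi}_\mu$ across each cusp. A cusp of absolute index $\lambda > \lceil n/2\rceil$ creates or destroys a pair of boundary critical points of Morse indices $\lambda$ and $\lambda-1$, contributing $\pm 1$ to both $\widetilde{\varphi}_\lambda$ and $\widetilde{\varphi}_{\lambda-1}$ with the same sign. The hypothesis $\Phi_k^n([g_0]) = \Phi_k^n([g_1])$ thus yields a bidiagonal linear system in the signed cusp counts $c_\lambda$; since no cusp can have absolute index $n-k+1$ under the $k$-constraint (as $k>1$ forces $n-k+1 \notin \{\lceil n/2\rceil, \dots, n-k\}\cup\{n\}$), back-substitution from $\mu = n-k$ downward forces $c_\lambda = 0$ for every admissible $\lambda > \lceil n/2\rceil$. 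For odd $n$, cusps of the middle absolute index $(n+1)/2$ have both adjacent fold branches of the same absolute index, so their contribution to every $\widetilde{\varphi}_\mu$ cancels identically; this is where the extra parity hypothesis enters, ensuring together with the vanishing of $c_\lambda$ for all other admissible $\lambda$ that the number of middle cusps is even and thus pairable.

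For task (ii), I would execute the geometric moves of \cite[Section 3]{ike}: slide cusps along fold components and, where necessary, create auxiliary cancelling pairs of cusps (whose algebraic contribution to the $c_\lambda$ is zero) to bring each matched pair onto a common fold arc of admissible absolute index, after which Levine's move applies. The main obstacle will be verifying that none of these preparatory slides and birth--death moves ever produces a fold point of forbidden absolute index. This requires the same careful bookkeeping as in Ikegami's proof, and is made possible by the standing assumption $1 < k \le n/2$, which keeps the admissible range $\{\lceil n/2\rceil, \dots, n-k\}$ a nonempty interval of consecutive integers; within such an interval, cusps can be transported and auxiliary pairs can be created using only folds of allowed absolute index, so that the entire cascade of moves produces, in the limit, a $k$-constrained bordism from $g_0$ to $g_1$ with no cusps at all, thereby realizing $[g_0] = [g_1] \in \mathcal{M}_k^n$.
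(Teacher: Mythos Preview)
Your approach is essentially the paper's: the paper's own proof is two sentences that (a) make $W^{n+1}$ connected and (b) defer entirely to Ikegami's argument \cite[proof of Theorem~2.7]{ike} for the cusp elimination, observing that the moves respect the $k$-constraint. Your task~(i)/task~(ii) outline is a faithful expansion of what that reference contains.

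There is, however, one genuine gap. You never arrange for $W^{n+1}$ to be connected, and this step is not optional. Levine's elimination joins two cusps by an arc in the domain; if the two cusps lie on different components of $W^{n+1}$, no such arc exists, and none of your sliding or auxiliary birth--death moves can remedy this. The hypotheses $\Phi_k^n([g_0]) = \Phi_k^n([g_1])$ and, for odd $n$, the cusp parity condition are \emph{global} statements about $G$; they do not descend to individual components. So even when your bidiagonal system forces the total signed count $c_\lambda$ to vanish, a single component of $W^{n+1}$ may carry a nonzero $c_\lambda$, and the pairing you produce in task~(i) will then be geometrically unrealizable in task~(ii). The paper fixes this at the outset by taking oriented connected sums of the components of $W^{n+1}$ along small $2$-discs centered at definite fold points of $G$; since definite folds have absolute index $n$, this modification keeps $G$ a $k$-constrained generic bordism. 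Once $W^{n+1}$ is connected, your argument goes through.
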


\begin{proof}
We make $W^{n+1}$ connected by using the oriented connected sum operation, and modify $G$ accordingly while performing the oriented connected sum along small $2$-discs centered at definite fold points of $G$.
If $W^{n+1}$ is connected, then $G$ is homotopic rel $\partial W^{n+1}$ to an oriented $k$-constrained bordism from $g_{0}$ to $g_{1}$ by means of an iterated elimination of matching pairs of cusps.
For details, see \cite[proof of Theorem 2.7, p. 220ff]{ike}.
\end{proof}

\begin{remark}\label{remark no cusps for 2k = n}
For $k = n/2 > 1$ it can be shown that any oriented $n/2$-constrained generic bordism $G \colon W^{n+1} \rightarrow \mathbb{R}^{2}$ is already an oriented $n/2$-constrained bordism.
Indeed, the map $G$ cannot have cusps because the occuring absolute fold indices $n$ and $n/2$ are not consecutive integers when $n/2 > 1$.
Consequently, $\mathcal{M}_{n/2}^{n} = \mathcal{G}_{n/2}^{n}$.
\end{remark}

By an adaption of the proof of \cite[Lemma 5.2, p. 226]{ike} we have the following

\begin{proposition}\label{proposition euler characteristic and cusps}
Let $G \colon W^{n+1} \rightarrow \mathbb{R}^{2}$ be an oriented $1$-constrained generic bordism from $g_{0} \colon M_{0}^{n} \rightarrow \mathbb{R}$ to $g_{1} \colon M_{1}^{n} \rightarrow \mathbb{R}$.
Let $c$ denote the number of cusps of $G$, and let $\nu$ denote the number of critical points of $g_{0} \sqcup g_{1}$.
Then, $\nu$ is even, and $c + \nu/2 \equiv \chi(W^{n+1}) \, \operatorname{mod} 2$, where $\chi(W^{n+1})$ denotes the Euler characteristic of $W^{n+1}$.
\end{proposition}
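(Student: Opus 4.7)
The plan is to adapt Ikegami's proof of his Lemma 5.2 \cite[Lemma 5.2, p.~226]{ike} to the constrained setting. The key auxiliary function is $h := \pi_{2} \circ G \colon W^{n+1} \rightarrow \mathbb{R}$, where $\pi_{2} \colon \mathbb{R}^{2} \rightarrow \mathbb{R}$ is the projection onto the second coordinate. The collar form of $G$ ensures that $h$ coincides with the collar parameter $t$ near $\partial W^{n+1}$; in particular, $M_{0} = h^{-1}(0)$ and $M_{1} = h^{-1}(1)$ are regular level sets of $h$. After a small generic perturbation of $G$ supported away from the collars (preserving the constrained generic bordism class and the cusp count $c$), the function $h$ becomes a Morse function on $W^{n+1}$ whose critical points all lie in the interior.

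The evenness of $\nu$ is immediate from Poincar\'e--Lefschetz duality: the identity $\chi(\partial W^{n+1}) = (1 + (-1)^{n}) \chi(W^{n+1})$ shows that $\chi(M_{0}) + \chi(M_{1})$ is even, and combined with the Morse-theoretic congruences $\chi(M_{i}) \equiv \nu_{i} \pmod{2}$ (where $\nu_{i}$ denotes the number of critical points of $g_{i}$), this gives $\nu = \nu_{0} + \nu_{1} \equiv 0 \pmod{2}$. To identify the critical points of $h$, I would note that $dh = d\pi_{2} \circ dG$ vanishes at $p \in W^{n+1}$ precisely when the image of $dG_{p}$ is horizontal in $\mathbb{R}^{2}$. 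For generic $G$ this forces $p$ to be either (i) a cusp of $G$, whose image curve has a horizontal limiting tangent and at which a direct computation in the cusp normal form produces the block Hessian $\bigl(\begin{smallmatrix} 0 & 1 \\ 1 & 0 \end{smallmatrix}\bigr) \oplus \operatorname{diag}(\pm 2, \dots, \pm 2)$ (non-degenerate), or (ii) a fold point at which the fold image has horizontal tangent in $\mathbb{R}^{2}$ --- a \emph{smooth tangency}. Hence $\#\operatorname{crit}(h) = c + T$, where $T$ is the number of smooth tangencies.

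The main obstacle is computing $T \pmod{2}$. The singular locus $S(G) \subset W^{n+1}$ is a compact smooth $1$-manifold with boundary $\partial S(G) \subset \partial W^{n+1}$ of cardinality $\nu$, hence a disjoint union of closed circles and arcs joining the boundary critical points. I would parametrize each component by $\gamma$ and study $\phi := h \circ \gamma$: the smooth tangencies appear as the sign-changing zeros of $\phi'$, whereas at a cusp the local computation $\phi(s) = -2 s^{3}$ shows that $\phi'$ vanishes to second order and does not change sign. Circles therefore contribute an even number of tangencies by periodicity of $\phi'$. For an arc on the interval $[0,L]$, the signs of $\phi'(0^{+})$ and $\phi'(L^{-})$ are positive or negative according as the corresponding endpoint lies on $M_{0}$ or $M_{1}$, so arcs with both endpoints on the same boundary component contribute an odd number of tangencies and cross-boundary arcs contribute an even number. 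Letting $a_{ij}$ denote the number of arcs from $M_{i}$ to $M_{j}$, the relations $2 a_{ii} + a_{01} = \nu_{i}$ yield $T \equiv a_{00} + a_{11} \equiv \nu/2 + \nu_{0} \pmod{2}$.

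Finally, standard Morse theory applied to the handle decomposition of $W^{n+1}$ built from $M_{0} \times [0,1]$ gives $\chi(W^{n+1}) \equiv \chi(M_{0}) + \#\operatorname{crit}(h) \pmod{2}$, and substituting the preceding calculations produces
\[
\chi(W^{n+1}) \equiv \nu_{0} + c + T \equiv \nu_{0} + c + (\nu/2 + \nu_{0}) \equiv c + \nu/2 \pmod{2},
\]
as claimed.
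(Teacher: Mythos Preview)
Your overall strategy---perturb $G$ so that $h = \pi_2 \circ G$ is Morse, count $\#\operatorname{crit}(h)$ via the sign changes of $\phi' = (h\circ\gamma)'$ along components of $S(G)$, and feed this into $\chi(W) \equiv \nu_0 + \#\operatorname{crit}(h) \pmod 2$---is precisely the adaptation of Ikegami's Lemma~5.2 that the paper invokes, and your arc/endpoint bookkeeping is correct.

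The local analysis at cusps, however, conflates the target chart of the normal form with the standard coordinates on $\mathbb{R}^2$. Writing the target chart as $\psi^{-1}(a,b) = (P(a,b),Q(a,b))$, one has $h = Q(t,\, tx_1 + x_1^3 \pm \cdots)$ in source coordinates, so $dh|_{0} = (Q_a(0,0), 0, \dots, 0)$: for a generic perturbation $Q_a(0,0) \neq 0$ and the cusp is \emph{not} a critical point of $h$. Likewise $\phi(s) = Q(-3s^2, -2s^3)$ gives $\phi'(s) = -6s\,Q_a(0,0) + O(s^2)$, a \emph{simple} sign-changing zero, not the double zero you claim. Thus generically $\#\operatorname{crit}(h) = T$ (fold tangencies only), while the sign-changing zeros of $\phi'$ number $T + c$. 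Your two errors cancel modulo $2$: the arc analysis now reads $T + c \equiv a_{00} + a_{11} \equiv \nu/2 + \nu_0$, whence $\chi(W) \equiv \nu_0 + T \equiv c + \nu/2 \pmod 2$ as desired. The conclusion survives, but both local claims about cusps should be corrected.
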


\subsection{Two-Index Theorem}\label{two-index theorem}

The purpose of this section is to discuss the two-index theorem of Hatcher and Wagoner \cite{hatwag}.
This theorem is based on a parametrized implementation of the Smale trick, by which one may trade a Morse critical point of index $i$ for one of index $i+2$ by creating a pair of critical points of successive indices $i+1$ and $i+2$, and then cancelling the Morse critical point of index $i$ with that of index $i+1$.
Under stronger assumptions the Smale trick has been used by Cerf in his proof of the pseudo-isotopy theorem (see \cite[Lemma 0, p. 101]{cer}).

\begin{theorem}\label{theorem cylinder bordism between constrained morse functions}
Fix integers $n \geq 5$ and $1 < k < n/2$.
Suppose that $f_{0}, f_{1} \colon M^{n} \rightarrow \mathbb{R}$ are $k$-constrained Morse functions on a closed manifold $M^{n}$.
Then, there exists an oriented $k$-constrained generic bordism $F \colon M^{n} \times [0, 1] \rightarrow \mathbb{R}^{2}$ from $f_{0}$ to $f_{1}$.
\end{theorem}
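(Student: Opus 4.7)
The plan is to start with an arbitrary generic one-parameter family of smooth functions connecting $f_{0}$ and $f_{1}$, and then to modify it by iterated applications of the two-index theorem so as to eliminate precisely those critical points whose Morse indices correspond to forbidden absolute fold indices.

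First, since $C^{\infty}(M^{n}, \mathbb{R})$ is convex, a generic perturbation of the linear homotopy from $f_{0}$ to $f_{1}$ yields a generic Cerf path $(f_{t})_{t \in [0, 1]}$, that is, a smooth family in which $f_{t}$ is a Morse function for all but finitely many $t \in (0, 1)$, at which birth-death singularities occur. Setting $F(x, t) = (f_{t}(x), t)$ produces a generic map $F \colon M^{n} \times [0, 1] \to \mathbb{R}^{2}$ whose singular locus consists of the trajectories of the critical points of $f_{t}$ (fold arcs) together with the birth-death times (cusps). The collar condition $(i)$ of \Cref{definition constrained generic bordism} is automatic after a small reparametrization near $t = 0, 1$, and the absolute index of a fold arc at time $t$ equals $\max\{i, n-i\}$, where $i$ is the Morse index of the corresponding critical point of $f_{t}$.

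To achieve the index constraint $(ii)$ I have to ensure that no fold arc carries a Morse index in the forbidden set $I := \{1, \dots, k-1\} \cup \{n-k+1, \dots, n-1\}$. Since $f_{0}$ and $f_{1}$ are $k$-constrained, the fold locus already avoids $I$ at the boundary $t = 0, 1$, so any forbidden-index fold arc must be born and die entirely inside the open cylinder $(0, 1)$. The key step is to apply the two-index theorem of Hatcher-Wagoner \cite{hatwag} to trade such arcs away: given a fold arc of Morse index $i$, I introduce via a birth cusp a cancelling pair of indices $(i+1, i+2)$ along a suitable trajectory, and then cancel the original index-$i$ arc against the freshly born index-$(i+1)$ arc via a death cusp; the net effect is to replace index $i$ by index $i+2$ in the family. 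Starting from $i = 1, 2, \dots$ and iterating this trade inside the lower forbidden band $\{1, \dots, k-1\}$ pushes every offending arc into the allowed middle band $\{k, \dots, n-k\}$ in finitely many steps, because the hypothesis $k < n/2$ guarantees that no upper forbidden index $n - k + 1, \dots, n - 1$ is overshot. Fold arcs originally sitting in the upper forbidden band $\{n-k+1, \dots, n-1\}$ are then pushed down to $\{k, \dots, n-k\}$ by the symmetric procedure, or equivalently by running the same argument for $-F$.

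The main obstacle will be to verify that the parametric Smale trick applies unobstructed at each stage. In general the Hatcher-Wagoner obstructions to cancelling a birth-death pair in a one-parameter family take values in groups such as $K_{2}(\mathbb{Z}[\pi_{1}(M)])$ and higher Whitehead-type groups; but because all the trading above takes place strictly below (respectively, above) the middle dimension, the relevant parametric cancellation effectively reduces to ordinary Smale handle cancellation inside a single Morse slice, which is unobstructed thanks to $\dim(M^{n} \times [0, 1]) = n + 1 \geq 6$. Iterating the procedure until $I$ is exhausted then yields the desired $k$-constrained generic bordism $F \colon M^{n} \times [0, 1] \to \mathbb{R}^{2}$ from $f_{0}$ to $f_{1}$.
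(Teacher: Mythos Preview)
Your overall plan---connect $f_{0}$ to $f_{1}$ by a generic Cerf path and then use the Hatcher--Wagoner two-index theorem to push the forbidden-index fold arcs into the allowed range---is the same idea the paper uses. But there is a genuine gap in how you invoke the two-index theorem.

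The version of the two-index theorem the paper cites (Hatcher--Wagoner via \cite[Section 9.9]{cieeli}) applies to Morse functions on a \emph{cobordism} $V$ whose critical indices already lie in an interval $\{k,\dots,n-k\}$ with $2 \le k$ and $n-k \le \dim V - 2$. Your functions $f_{0},f_{1}$ live on the \emph{closed} manifold $M^{n}$ and have critical indices in $\{0\}\cup\{k,\dots,n-k\}\cup\{n\}$, which is not an interval and not bounded away from the extremes. In particular, a generic path $f_{t}$ will in general contain $(0,1)$- and $(n-1,n)$-birth/death cusps, so that forbidden index-$1$ (resp.\ index-$(n-1)$) arcs are joined at cusps to definite index-$0$ (resp.\ index-$n$) arcs. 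Your description of the Smale trade does not say how to disentangle these; and your remark that the parametric cancellation ``effectively reduces to ordinary Smale handle cancellation inside a single Morse slice'' is too vague to substitute for the actual hypothesis of the two-index theorem.

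The paper's proof handles exactly this point: since $M^{n}$ is connected and $k\ge 2$, each $f_{i}$ has a \emph{unique} minimum and maximum; after an isotopy one arranges that the min/max discs of $f_{0}$ and $f_{1}$ coincide, then removes them to obtain Morse functions $g_{0},g_{1}$ on a genuine cobordism $V\cong M^{n}\setminus(D^{n}\sqcup D^{n})$ from $S^{n-1}$ to $S^{n-1}$ with all indices in $\{k,\dots,n-k\}$. Now the two-index theorem applies directly and yields the $k$-constrained family $G$ on $V\times[0,1]$. The remaining work---which occupies most of the paper's proof---is a careful extension of $G$ over the removed discs $U^{j}\times[0,1]$ using matched tubular neighborhoods and isotopy extension, to produce the global map $F$ on $M^{n}\times[0,1]$. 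Both the reduction to $V$ and this extension step are missing from your proposal; without the first, your appeal to the two-index theorem is not justified.
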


\begin{proof}
Without loss of generality, we may assume that $M^{n}$ is connected, and that $f_{0}(M^{n}) = f_{1}(M^{n}) = [0, 1]$.
For $i = 0, 1$ let $c_{i}^{0}$ and $c_{i}^{1}$ denote the unique critical points of $f_{i}$ of index $0$ and $n$, respectively.
For $i, j \in \{0, 1\}$ and suitable $\varepsilon > 0$ there exist orientation preserving embeddings $\iota_{i}^{j} \colon D^{n}_{2 \varepsilon} \rightarrow M$ such that $\iota_{i}^{j}(0) = c_{i}^{j}$ and
\begin{displaymath}
(f_{i} \circ \iota_{i}^{j})(x) = e^{j}(||x||^{2}) := j + (-1)^{j}||x||^{2}, \qquad x \in D^{n}_{2 \varepsilon}. \qquad (\ast)
\end{displaymath}
Furthermore, for possibly smaller $\varepsilon > 0$, there exists an isotopy $H \colon [0, 1] \times M \rightarrow M$ of diffeomorphisms $H_{t} := H(t, -) \colon M \rightarrow M$ such that $H_{0} = \operatorname{id}_{M}$ and $H_{1} \circ \iota_{0}^{j} = \iota_{1}^{j}$ for $j = 0, 1$.
Therefore, after replacing $f_{1}$ by $f_{1} \circ H_{1}$, we may without loss of generality work with the assumption that $f_{0} \circ \iota_{0}^{j} = f_{1} \circ \iota_{0}^{j}$ for $j = 0, 1$.
Set $U^{j} := \iota_{0}^{j}(D^{n}_{\varepsilon})$ for $j = 0, 1$.
Set $V := M \setminus (\iota_{0}^{0}(\operatorname{int} D^{n}_{\varepsilon}) \cup \iota_{0}^{1}(\operatorname{int} D^{n}_{\varepsilon}))$ and $V^{j} := U^{j} \cap V \cong S^{n-1}$ for $j = 0, 1$.
Then, $f_{i}$ restricts for $i = 0, 1$ to a Morse function
\begin{displaymath}
g_{i} := f_{i}|_{V} \colon (V, V^{0}, V^{1}) \rightarrow ([\varepsilon^{2}, 1-\varepsilon^{2}], \varepsilon^{2}, 1-\varepsilon^{2})
\end{displaymath}
all of whose critical points have Morse index contained in the set $\{k, \dots, n-k\}$.
Choose a generic $1$-parameter family $g_{t}$, $t \in [0, 1]$, as described in \cite[Theorem 9.4, pp. 188f]{cieeli} with regular level sets $V_{0} = g_{t}^{-1}(\varepsilon^{2})$ and $V_{1} = g_{t}^{-1}(1-\varepsilon^{2})$.
Since the cardinality of the set $\{k, \dots, n-k\}$ is at least $2$ (recall that $k < n/2$), we can use the two-index theorem of Hatcher and Wagoner (see \cite[Chapter V, Proposition 3.5]{hatwag}) in the form presented in \cite[Section 9.9, pp. 212f]{cieeli} to modify the family $g_{t}$ rel $g_{0}$ and $g_{1}$ iteratively in such a way that the resulting generic map $G \colon V \times [0, 1] \rightarrow [\varepsilon^{2}, 1-\varepsilon^{2}] \times [0, 1]$, $(x, t) \mapsto (g_{t}(x), t)$ is $k$-constrained.

In the following, we sketch the construction of the desired map $F$, which amounts to a careful extension of $G$ over $U^{j} \times [0, 1]$ for $j = 0, 1$.
(The construction is presented in full detail in \cite[Section 8.4]{wra} using \cite[Appendix B]{wra}.)
Without loss of generality, we may assume for $t \in [0, 1]$ that $g_{t} = g_{0}$ when $t$ is near $0$, and that $g_{t} = g_{1}$ when $t$ is near $1$.
We extend $g \colon V \times [0, 1] \rightarrow [\varepsilon^{2}, 1-\varepsilon^{2}]$ to a smooth map $\tilde{g} \colon \tilde{V} \times [0, 1] \rightarrow \mathbb{R}$ for some open neighborhood $\tilde{V}$ of $V$ in $M$ such that,  for $t \in [0, 1]$, $\tilde{g}|_{\tilde{V} \times \{t\}} = f_{0}|_{\tilde{V}}$ when $t$ is near $0$, and that $\tilde{g}|_{\tilde{V} \times \{1\}} = f_{1}|_{\tilde{V}}$ when $t$ is near $1$.
For $j = 0, 1$, we define a tubular neighborhood of $V^{j} \times [0, 1]$ in $\tilde{V} \times [0, 1]$ by
\begin{displaymath}
\alpha^{j} \colon (-\delta, \delta) \times V^{j} \times [0, 1] \rightarrow \tilde{V} \times [0, 1], \qquad \alpha^{j}(u, v, t) = (\iota_{0}^{j}(\rho(u) \cdot (\iota_{0}^{j})^{-1}(v)), t),
\end{displaymath}
where $\rho \colon (-1/2, 1/2) \rightarrow \mathbb{R}$ is given by $\rho(r) = \sqrt{r+1}$.
By construction, we have $\operatorname{pr}_{[0, 1]} \circ \alpha^{j} = \operatorname{pr}_{[0, 1]}$ and $(\tilde{g} \circ \alpha^{j})(u, v, t) = e^{j}(\varepsilon^{2}(u+1))$ when $t \in [0, 1]$ is near $0$ or near $1$.
For $j = 0, 1$, we use the technique of integral curves of vector fields on manifolds with boundary (see \cite[Chapter 6 \S 2, pp. 149ff]{hir}) to construct another tubular neighborhood of $V^{j} \times [0, 1]$ in $\tilde{V} \times [0, 1]$, say
\begin{displaymath}
\beta^{j} \colon (-\delta, \delta) \times V^{j} \times [0, 1] \rightarrow \tilde{V} \times [0, 1],
\end{displaymath}
such that $\operatorname{pr}_{[0, 1]} \circ \beta^{j} = \operatorname{pr}_{[0, 1]}$ and $(\tilde{g} \circ \beta^{j})(u, v, t) = e^{j}(\varepsilon^{2}(u+1))$.
By adapting the proof of \cite[Theorem 5.3, p. 112]{hir} we can construct for some open neighborhood $U \subset \tilde{V} \times [0, 1]$ of $\partial V \times [0, 1]$ an isotopy rel $U \cap (\tilde{V} \times \{0, 1\})$ from the inclusion $U \hookrightarrow \tilde{V} \times [0, 1]$ to an embedding $\theta \colon U \rightarrow \tilde{V} \times [0, 1]$ such that $\theta \circ \alpha^{j} = \beta^{j}$ on a neighborhood of $V^{j} \times [0, 1]$ in $\tilde{V} \times [0, 1]$.
A version of the isotopy extension theorem (see \cite[Theorem 1.4, p. 180]{hir}) provides an ambient isotopy rel a neighborhood of $M \times \{0, 1\}$ in $M \times [0, 1]$ from $\operatorname{id}_{M \times [0, 1]}$ to an automorphism $\Theta$ of $M \times [0, 1]$ such that $\Theta \circ \alpha^{j} = \beta^{j}$ for $j = 0, 1$ on a neighborhood of $V^{j} \times [0, 1]$ in $M \times [0, 1]$.
Finally, the desired map $F \colon M \times [0, 1] \rightarrow \mathbb{R}^{2}$ can be defined as
\begin{displaymath}
F(x, t) = \begin{cases}
(G \circ \Theta)(x, t), \text{ if } x \in V, \\
(e^{j}(||(\iota_{0}^{j})^{-1}(x)||^{2}), t), \text{ if } x \in U^{j}.
\end{cases}
\end{displaymath}
\end{proof}

\begin{remark}\label{remark boundary case}
In \cite[Lemma 3.1, p. 291]{sae2}, Cerf's pseudo-isotopy theorem \cite{cer} is used to show that the statement of \Cref{theorem cylinder bordism between constrained morse functions} also holds for $n \geq 6$ and $k > n/2$.
\end{remark}

\subsection{Handle Extension Theorem}\label{handle extension theorem}
Using the techniques of standard Morse functions (see \Cref{theorem existence standard Morse functions}) and forward handles (see \Cref{remark forward handle}) of Gay and Kirby \cite{gaykir}, we prove a handle extension theorem (\Cref{theorem handle extension}) for constrained Morse functions (see also \cite[Chapter 7]{wra}).

Let $W_{0}^{n}$ and $W_{1}^{n}$ be oriented closed manifolds of dimension $n$, and let $W^{n+1}$ be an oriented compact $(n+1)$-manifold with oriented boundary $\partial W = W_{0} \sqcup W_{1}$.
Let us consider the problem of extending a given $k$-constrained Morse function $f_{0} \colon W_{0} \rightarrow \mathbb{R}$ over $W$, i.e. to construct a $k$-constrained Morse function $f_{1} \colon W_{1} \rightarrow \mathbb{R}$, and an oriented $k$-constrained bordism $F \colon W \rightarrow \mathbb{R}^{2}$ from $f_{0}$ to $f_{1}$.
For $k = 1$ this can always be achieved by applying the techniques of \Cref{section cusps} to a suitable generic extension of $f_{0}$ over $W$.
The ``handle extension theorem'' addresses this problem for $k > 1$, provided that $W$ admits a handle decomposition with only handles of a single index contained in $\{k+1, \dots n-k\}$, and that $f_{0}$ is nicely compatible with the attaching maps of the handles of $W$ (see \Cref{standard Morse functions}).
More specifically, the purpose of \Cref{proof of handle extension theorem} is to derive the following result.

\begin{theorem}\label{theorem handle extension}
Let $n \geq 5$ and $k \in \{2, \dots, \lceil n/2 \rceil -1\}$.
Suppose that $\tau \colon W^{n+1} \rightarrow [0, 1]$ is a Morse function with regular value sets $W_{0} = \tau^{-1}(0)$ and $W_{1} = \tau^{-1}(1)$ such that all critical points of $\tau$ have the same Morse index $\lambda \in \{k+1, \dots, n-k\}$ and are contained in the slice $\tau^{-1}(1/2)$.
Furthermore, suppose that $W_{0}^{n}$ is $(k-1)$-connected.
Then, there exists a smooth function $\sigma \colon W \rightarrow \mathbb{R}$ with the following properties:
\begin{enumerate}[(i)]
\item $\sigma$ restricts for every $t \neq 1/2$ to 
a $k$-constrained Morse function
$$\sigma_{t} \colon \tau^{-1}(t) \rightarrow \mathbb{R}.$$
\item $\sigma$ and $\tau$ form the components of an oriented $k$-constrained bordism
\begin{displaymath}
(\sigma, \tau) \colon W \rightarrow \mathbb{R} \times [0, 1]
\end{displaymath}
from $\sigma_{0}$ to $\sigma_{1}$.
\end{enumerate}
\end{theorem}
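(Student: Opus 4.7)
The strategy is to build $\sigma$ in three stages: first on $W_{0}$, then by product extension on the lower collar $\tau^{-1}([0, 1/2 - \varepsilon])$, then across the critical slice $\tau^{-1}(1/2)$ via Gay and Kirby's forward handle extension, and finally by a mirror product extension on the upper collar $\tau^{-1}([1/2 + \varepsilon, 1])$. Because $W_{0}^{n}$ is $(k-1)$-connected and $n \geq 5$, the paper's fundamental observation on existence of $k$-constrained Morse functions yields a $k$-constrained self-indexing Morse function $\sigma_{0} \colon W_{0} \to \mathbb{R}$. The attaching link $L \subset \tau^{-1}(1/2) \cong W_{0}$ of the $\lambda$-handles of $\tau$ is a framed collection of embedded $(\lambda - 1)$-spheres with $\lambda - 1 \geq k$. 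Invoking the Gay--Kirby existence theorem for standard Morse functions (\Cref{theorem existence standard Morse functions}), I would then modify $\sigma_{0}$ so that it becomes standard along $L$, arguing from the connectivity of $W_{0}$ and the attaching-sphere dimension bound $\lambda - 1 \geq k$ that the modification can be performed without creating any critical point of indefinite index in $\{1, \dots, k-1\} \cup \{n-k+1, \dots, n-1\}$.

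On the lower collar, set $\sigma := \sigma_{0} \circ \mathrm{pr}_{W_{0}}$; then $(\sigma, \tau)$ is a cusp-free fold map whose fold locus is $\operatorname{Crit}(\sigma_{0}) \times [0, 1/2 - \varepsilon]$, with absolute fold indices inherited from $\sigma_{0}$ and hence in $\{\lceil n/2\rceil, \dots, n-k\} \cup \{n\}$. Across the critical slice, apply the Gay--Kirby forward handle extension (\Cref{remark forward handle}) to each $\lambda$-handle of $\tau$. Since $\sigma_{0}$ is standard along the attaching link, the forward handle model applies directly; it introduces no cusps and only an arc of indefinite fold singularities whose absolute index is determined by $\lambda$ and lies in $\{\lceil n/2\rceil, \dots, n-k\}$ under the hypothesis $\lambda \in \{k+1, \dots, n-k\}$. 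Extend $\sigma$ over the upper collar as a product using the Morse function induced on $\tau^{-1}(1/2 + \varepsilon)$ by the forward handle model, and set $\sigma_{1} := \sigma|_{W_{1}}$.

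Property (i) is then immediate for $t \neq 1/2$, since off the critical slice $(\sigma, \tau)$ is an honest fold map whose slicewise restrictions are the required $k$-constrained Morse functions $\sigma_{t}$; property (ii) follows because the total fold locus is the union of the slicewise critical-set products with the indefinite fold arcs from the forward handles, no cusps are introduced, and all absolute fold indices lie in the admissible set $\{\lceil n/2\rceil, \dots, n-k\} \cup \{n\}$. I expect the main obstacle to be the first stage: simultaneously achieving $k$-constrainedness and standardness of $\sigma_{0}$ along $L$. The Gay--Kirby standardization can a priori produce complementary critical pairs of consecutive Morse indices, and one must exploit the $(k-1)$-connectedness of $W_{0}$ together with Smale's trick (which is legitimate precisely in the ranges $n \geq 5$ and $k \leq \lceil n/2 \rceil - 1$) to cancel any such pair that would land in the forbidden indefinite index range.
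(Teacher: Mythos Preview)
Your approach is essentially the paper's: construct a $k$-constrained Morse function on the regular slice just below the critical level that is standard along the attaching spheres (the paper packages this step as \Cref{corollary constrained standard Morse functions}), then use the forward-handle model of \Cref{forward k handle} to extend across the critical level, and propagate elsewhere along the gradient flow of $\tau$. Your diagnosis of the main obstacle---simultaneous $k$-constrainedness and standardness---and its resolution via Smale-type cancellation is exactly the content of the proof of \Cref{corollary constrained standard Morse functions}.

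There is, however, one genuine gap. Both \Cref{theorem existence standard Morse functions} and \Cref{corollary constrained standard Morse functions} require the framed submanifolds $L_i^d$ to satisfy $d < n/2$ (respectively $d \leq \lceil n/2 \rceil - 1$). Your attaching spheres have $d = \lambda - 1$, and the hypothesis on $\lambda$ only gives $\lambda - 1 \in \{k, \dots, n-k-1\}$; when $\lambda > \lceil n/2 \rceil$ the bound $d < n/2$ fails and the standardization step cannot be invoked as stated. The paper handles this at the outset by a symmetry reduction: replace $\tau$ by $1 - \tau$, which swaps $W_0 \leftrightarrow W_1$ and $\lambda \leftrightarrow (n+1)-\lambda$, so one may assume $k+1 \leq \lambda \leq \lceil n/2 \rceil$. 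This in turn requires checking that $W_1$ is also $(k-1)$-connected, which follows because surgery of index $\lambda - 1 \in \{k, \dots, n-k-1\}$ on a closed $n$-manifold does not affect homology below degree $\min\{\lambda-1, n-\lambda\} \geq k$, and simple-connectivity persists as in \cite[Remark~1, p.~70]{mil2}. With this reduction in place your outline goes through.

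A minor slip: the attaching link lives in a regular slice such as $\tau^{-1}(1/2 - \delta^2)$, not in the critical level $\tau^{-1}(1/2)$ itself.
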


\begin{remark}\label{remark forward handle}
Following \cite{gaykir}, the main idea behind the construction of the desired function $\sigma$ in \Cref{theorem handle extension} can be illustrated as follows for a bordism $W$ of dimension $n+1=2$ with a single critical point $c$ of index $\lambda = 1$ (see \Cref{fig:main_theorem_forward_handle}).

\begin{figure}[htbp]
  \centering
  \fbox{\begin{tikzpicture}[scale=0.3652]
    \draw (0, 0) node[inner sep=0] {\includegraphics[width=0.7\textwidth]{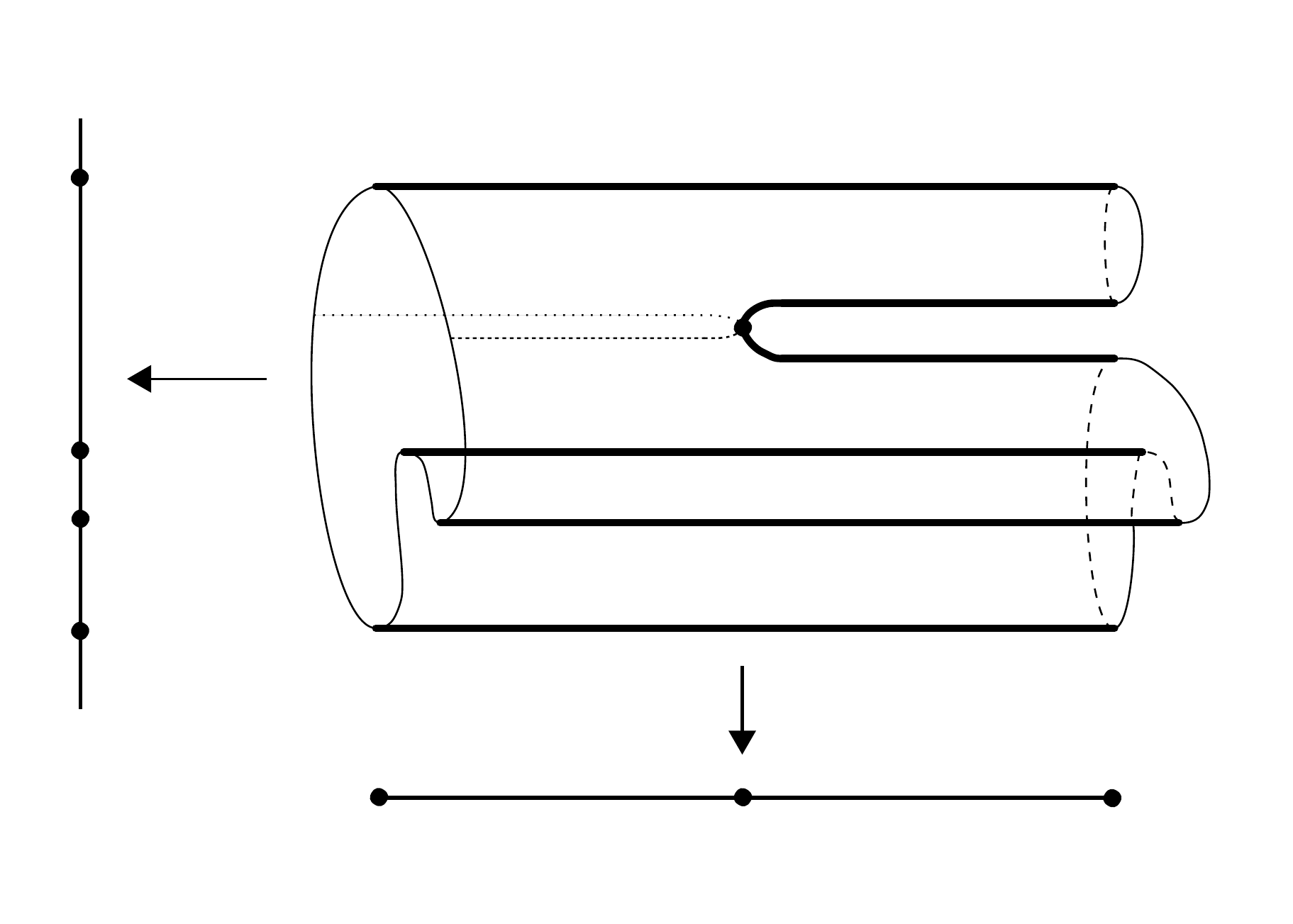}};
    \draw (2.5, 6.4) node {$W^{n+1}$};
    \draw (-5, 6.4) node {$W_{0}$};
    \draw (8.8, 6.4) node {$W_{1}$};
    \draw (1.8, 1.6) node {$c$};
    \draw (-5, -7.2) node {$0$};
    \draw (8.8, -7.2) node {$1$};
    \draw (1.3, -4.5) node {$\tau$};
    \draw (-8.2, 2.2) node {$\sigma$};
    \draw (-11.3, 6.6) node {$\mathbb{R}$};
\end{tikzpicture}}
  \caption{Construction of $\sigma$ as the height function on a $2$-dimensional cobordism $W$. The fold lines of $(\sigma, \tau)$ are marked as bold lines on $W$.
  Note that $\sigma$ restricts to an excellent Morse function on $W_{0}$ that is standard with respect to the left-hand sphere of the critical point $c$ of $\tau$ in $W_{0}$.}
  \label{fig:main_theorem_forward_handle}
\end{figure}

Choose local coordinates $(x, y) \in \mathbb{R}^{\lambda} \times \mathbb{R}^{n+1-\lambda}$ centered at $c$ in which $\tau$ has the normal form $(x, y) \mapsto -||x||^{2} + ||y||^2 + 1/2$, and consider the so-called \emph{forward $\lambda$-handle} (see \cite[Fig. 29]{gaykir})
\begin{displaymath}
\mathbb{R}^{\lambda} \times \mathbb{R}^{n+1-\lambda} \rightarrow \mathbb{R}^{2}, \qquad (x, y) \mapsto (-||x||^{2} + ||y||^2+1/2, y_{1}).
\end{displaymath}
By composition with the diffeomorphism $\mathbb{R}^{2} \rightarrow \mathbb{R}^{2}$, $(a, b) \mapsto (a-b^{2}, b)$, this map can be shown to be a fold map with a single fold line (namely, the $y_{1}$-axis) of absolute index $\operatorname{max}\{\lambda, n-\lambda\}$.
Still working in the local chart around $c$, the proof of \Cref{forward k handle} uses a bump function to modify the forward $\lambda$-handle outside a compact neighborhood of the origin in a way that allows to extend it to the desired function $(\sigma, \tau)$ on all of $W$.
By means of integral curves of a gradient-like vector field of $\tau$ we reduce this extension problem for suitable $t_{-} \in (0, 1/2)$ to the construction of a $k$-constrained Morse function $\sigma_{-} \colon \tau^{-1}(t_{-}) \rightarrow \mathbb{R}$ which is in addition \emph{standard} (see \cite[Section 4]{gaykir}) with respect to the left-hand sphere of the critical point of $\tau$.
Then, $\sigma$ can be taken to be a height function in \Cref{fig:main_theorem_forward_handle}.
Finally, as indicated in \Cref{fig:main_theorem_forward_handle}, the fold lines of $(\sigma, \tau)$ are given by the suspended fold points of $\sigma_{-}$, and one additional indefinite fold line of absolute index $\operatorname{max}\{\lambda, n-\lambda\}$ coming from the forward $\lambda$-handle.
\end{remark}

\begin{remark}
The original intention of \cite{gaykir} is to navigate between so-called \emph{Morse $2$-functions}, i.e., generic maps from a bordism into a surface.
Motivated by the study of Lefschetz fibrations in the context of symplectic geometry, Gay and Kirby focus on Morse $2$-functions without definite fold points, and with connected fibers.
Our achievement in \Cref{theorem handle extension} is to adapt their method to the case that stronger index constraints are imposed on indefinite absolute indices of fold points (compare \cite[Remark 1.6, p. 8]{gaykir}).
\end{remark}

\subsubsection{Standard Morse Functions}\label{standard Morse functions}
In \Cref{theorem existence standard Morse functions} below we recall a result of Gay and Kirby (see \cite[Theorem 4.2]{gaykir}) on the existence of so-called indefinite standard Morse functions.
For this purpose, let $Y_{0}^{n-1}$ and $Y_{1}^{n-1}$ denote nonempty closed manifolds of dimension $n-1 \geq 1$, and let $Y^{n}$ be a connected compact $n$-manifold with boundary $\partial Y = Y_{0} \sqcup Y_{1}$.
For $i = 1, \dots, N$ let
\begin{displaymath}
\phi_{i} \colon L_{i}^{d} \times \operatorname{int} D_{\varepsilon}^{n-d} \rightarrow Y \setminus \partial Y, \qquad \varepsilon > 0,
\end{displaymath}
be pairwise disjoint embeddings, where $L_{i}^{d}$ are closed manifolds of dimension $d < n/2$.
Fix real numbers $0 < z_{1} < \dots < z_{N} < 1$.

\begin{theorem}\label{theorem existence standard Morse functions}
There exists a Morse function $g \colon Y^{n} \rightarrow [0, 1]$ with regular level sets $Y_{0} = f^{-1}(0)$ and $Y_{1} = f^{-1}(1)$, and with the following properties:
\begin{enumerate}[(i)]
\item $g$ is standard with respect to the pairs $(\phi_{i}, z_{i})$, i.e., there exists $\varepsilon' \in (0, \varepsilon)$ such that for $i = 1, \dots, N$,
\begin{displaymath}
g(\phi_{i}(u, v)) = v_{1} + z_{i}, \qquad (u, v) \in L_{i}^{d} \times \operatorname{int} D_{\varepsilon'}^{n-d}.
\end{displaymath}
\item $g$ is indefinite, i.e., $g$ has neither critical points of index $0$ nor of index $n$.
\item Every critical point $c$ of $g$ of index $\leq d$ satisfies $g(c) < z_{1}$, whereas every critical point $c$ of $g$ of index $> d$ satisfies $g(c) > z_{N}$.
\end{enumerate}
\end{theorem}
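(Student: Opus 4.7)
My plan is to build $g$ outward from the required standard germ on the tubes, and then clean up by classical Morse-theoretic moves.

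First, I fix some $\varepsilon' \in (0, \varepsilon)$ small enough that the sub-tubes $U_{i} := \phi_{i}(L_{i}^{d} \times \operatorname{int} D^{n-d}_{\varepsilon'})$ are pairwise disjoint in $Y \setminus \partial Y$ and the intervals $(z_{i} - \varepsilon', z_{i} + \varepsilon')$ fit disjointly inside $(0,1)$. On $U_{i}$, set $g := (v_{1} + z_{i}) \circ \phi_{i}^{-1}$; this is a submersion whose image is contained in $(z_{i} - \varepsilon', z_{i} + \varepsilon')$. Extend $g$ arbitrarily to a smooth map $\tilde g \colon Y \to [0,1]$ with $\tilde g^{-1}(0) = Y_{0}$, $\tilde g^{-1}(1) = Y_{1}$, and such that $\tilde g$ is regular on a collar of $\partial Y$. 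A $C^{\infty}$-generic perturbation of $\tilde g$ supported in the complement of $\bigcup_{i} U_{i}$ and of the collar then produces a Morse function $g_{0}$ satisfying (i), with regular values at $0$ and $1$.

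Next, I improve $g_{0}$ to satisfy (ii) without touching it on $\bigcup_{i} U_{i}$ or on the collar. Since $Y$ is connected and $Y_{0}, Y_{1}$ are both nonempty, any index-$0$ critical point of $g_{0}$ lies in a component of $g_{0}^{-1}([0,c])$ that is eventually joined to the rest by some index-$1$ critical point, so the Morse cancellation lemma eliminates them in pairs; dually I cancel each index-$n$ critical point against an index-$(n-1)$ one. The gradient-like vector field realizing these cancellations can be chosen to coincide with $\partial/\partial v_{1}$ on each $U_{i}$, and the cancelling trajectories, being $1$-dimensional, can be isotoped off the $d$-dimensional submanifolds $\phi_{i}(L_{i} \times \{0\})$ by general position (using $1 + d < n$, which holds because $d < n/2$ and $n \geq 2$). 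The result is a Morse function $g_{1}$ still satisfying (i), whose critical points all have indices in $\{1, \dots, n-1\}$.

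Finally, I use Smale's handle-reordering theorem with the same gradient-like vector field to obtain (iii). Since $d < n/2$, the index-ranges $\{1, \dots, d\}$ and $\{d+1, \dots, n-1\}$ are disjoint and complementary, so the classical reordering argument pushes all critical points of index $\leq d$ below the level $z_{1}$ and all critical points of index $> d$ above the level $z_{N}$. Because the reordering modifies $g_{1}$ only along gradient trajectories, and the vector field is vertical on each $U_{i}$, no trajectory ever enters a $U_{i}$, and the standard germ is preserved. Rescaling the values at the end so that $0$ and $1$ remain extrema and the levels $z_{i}$ remain fixed yields the desired $g$.

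The main obstacle is precisely ensuring that steps two and three do not disturb the standard germ set up in step one. This is resolved by the following two ingredients, both tied to the hypothesis $d < n/2$: choosing a gradient-like vector field whose restriction to each $U_{i}$ equals $\partial/\partial v_{1}$ (so that trajectories leaving a $U_{i}$ never return), and using the codimension inequality $n-d > n/2 > d$ to place all cancelling and reordering arcs in general position with respect to the $d$-dimensional cores $\phi_{i}(L_{i} \times \{0\})$.
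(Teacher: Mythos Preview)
The paper does not prove this theorem itself; it is quoted from Gay--Kirby \cite[Theorem~4.2]{gaykir}, so there is no in-paper argument to compare against and your sketch must stand on its own.

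The decisive gap is in your last paragraph. Choosing the gradient-like vector field equal to $\partial/\partial v_{1}$ on each $U_{i}$ forces trajectories to be \emph{vertical inside} $U_{i}$; it does not make them avoid $U_{i}$. Flow lines enter each tube at $v_{1}=-\varepsilon'$ and exit at $v_{1}=+\varepsilon'$, so the ascending and descending manifolds $K_{p}$ of a critical point $p$ will in general meet $U_{i}$, and Milnor's rearrangement modification---which is supported on an open neighbourhood of all of $K_{p}$, not on a one-dimensional arc---will alter $g_{1}$ on $U_{i}$ and destroy the standard form $v_{1}+z_{i}$. Your transversality count $1+d<n$ only lets you push one-dimensional arcs off the $d$-dimensional cores $\phi_{i}(L_{i}\times\{0\})$; it does not keep the modifications away from the open $n$-dimensional tubes $U_{i}$, and for the reordering step the relevant set $K_{p}$ has dimension $\max(\lambda,n-\lambda)\ge\lceil n/2\rceil$, not $1$. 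Making handle cancellation and rearrangement compatible with a prescribed germ on the $\phi_{i}$ is exactly the substantive content of Gay--Kirby's Theorem~4.2, and it requires a considerably more careful argument than the one you have sketched.
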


\begin{remark}
By property $(i)$ of \Cref{theorem existence standard Morse functions}, the submanifold $L_{i} = \phi_{i}(L_{i} \times 0) \subset Y$ lies in the fiber $g^{-1}(z_{i})$ of $g$.
Furthermore, the framing of $L_{i}$ in $Y$ induced by $\phi_{i}$ is nicely compatible with $g$ in such a way that $g$ has no critical points in $\phi_{i}(L_{i} \times \operatorname{int} D_{\varepsilon'}^{n-d})$.
\end{remark}

The following corollary is concerned with the existence of constrained Morse functions that are standard with respect to prescribed pairs $(\phi_{i}, z_{i})$.

\begin{corollary}\label{corollary constrained standard Morse functions}
Let $n \geq 5$ and $k \in \{2, \dots, \lceil n/2 \rceil-1\}$.
Let $M^{n}$ be a $(k-1)$-connected closed $n$-manifold.
Suppose that $L_{1}^{d}, \dots, L_{N}^{d}$ are closed manifolds of dimension $d \in \{k, \dots, \lceil n/2 \rceil-1\}$, and that, for some $\varepsilon > 0$, there exist embeddings
\begin{displaymath}
\phi_{i} \colon L_{i}^{d} \times \operatorname{int} D_{2 \varepsilon}^{n-d} \rightarrow M^{n}, \qquad i \in \{1, \dots, N\},
\end{displaymath}
whose images have pairwise disjoint closures.
Furthermore, let $0 < z_{1} < \dots < z_{N} < 1$ be real numbers.
Then, $M^{n}$ admits a $k$-constrained Morse function $f \colon M^{n} \rightarrow \mathbb{R}$ such that, for some $C > 0$,
\begin{displaymath}
f(\phi_{i}(u, v)) = C \cdot v_{1} + z_{i}, \qquad (u, v) \in L_{i}^{d} \times \operatorname{int} D_{\varepsilon}^{n-d}, \qquad i = 1, \dots, N.
\end{displaymath}
\end{corollary}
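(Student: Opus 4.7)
The plan is to apply \Cref{theorem existence standard Morse functions} to a twice-punctured version of $M$, re-close $M$ by a local minimum and maximum, and then use Smale's handle-cancellation to eliminate the forbidden critical indices $1,\dots,k-1$ and $n-k+1,\dots,n-1$, keeping the standard form over the tubes $\phi_i$ intact.

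Concretely, choose disjoint open $n$-discs $D_0, D_1 \subset M$ disjoint from the closures of the $\phi_i(L_i \times D^{n-d}_{2\varepsilon})$ and set $Y := M \setminus (D_0 \cup D_1)$; this is still $(k-1)$-connected since $k-1 \leq n-2$. Apply \Cref{theorem existence standard Morse functions} to $Y$ (valid since $d < n/2$ by the hypothesis $d \leq \lceil n/2 \rceil - 1$) with boundary components $Y_j \cong S^{n-1}$ ($j = 0, 1$) to obtain an indefinite Morse function $g \colon Y \to [0,1]$ standard with respect to the $(\phi_i, z_i)$ on some $L_i \times \operatorname{int} D^{n-d}_{\varepsilon'}$ (with $\varepsilon' \in (0, 2\varepsilon)$), with critical points of index $\leq d$ at levels below $z_1$ and of index $> d$ at levels above $z_N$. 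Extend $g$ across $D_0, D_1$ by inserting a local minimum and maximum at suitable levels outside $[0, 1]$ to obtain a Morse function $f_0 \colon M \to \mathbb{R}$.

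Let $M_- := f_0^{-1}((-\infty, z_1 - \eta])$ and $M_+ := f_0^{-1}([z_N + \eta, \infty))$ for small regular values $\eta$. The complement of $M_-$ in $M$ is assembled purely by handles of index $> d$ plus the top cell, so the inclusion $M_- \hookrightarrow M$ is $d$-connected, and combined with the hypothesis that $M$ is $(k-1)$-connected and $d \geq k$, this forces $M_-$ to be $(k-1)$-connected. Since $k \leq \lceil n/2 \rceil - 1$ implies $n \geq 2k+1$, Smale's handle-cancellation (Whitney trick; see Milnor's proof of the $h$-cobordism theorem \cite{mil2}) modifies $f_0|_{M_-}$ rel $\partial M_-$ to eliminate all critical points of index $< k$. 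The dual argument applied to $-f_0$ on $M_+$, which is $(k-1)$-connected by the analogous count using $d \leq n-k-1$ (a consequence of $d + k \leq 2\lceil n/2 \rceil - 2 \leq n-1$), eliminates critical points of index $> n-k$. Both modifications are performed away from $f_0^{-1}([z_1, z_N])$, so the standard form over the tubes survives; denote the resulting $k$-constrained Morse function by $f_1$.

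Finally, pre-compose $f_1$ by an ambient self-diffeomorphism $\psi$ of $M$ that is the identity outside $\bigcup_i \phi_i(L_i \times D^{n-d}_{2\varepsilon})$ and inside each tube is $\phi_i(u, v) \mapsto \phi_i(u, \mu(v))$, where $\mu \colon D^{n-d}_{2\varepsilon} \to D^{n-d}_{2\varepsilon}$ is a radial diffeomorphism equal to $v \mapsto (\varepsilon'/\varepsilon) v$ on $\{|v| < \varepsilon\}$ and equal to the identity near $\partial D^{n-d}_{2\varepsilon}$. The function $f := f_1 \circ \psi$ remains $k$-constrained (critical points are merely relocated, with unchanged indices) and satisfies $f(\phi_i(u, v)) = C v_1 + z_i$ on $L_i \times \operatorname{int} D^{n-d}_\varepsilon$ with $C := \varepsilon'/\varepsilon > 0$, as required. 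The main obstacle is to carry out the cancellation step without disturbing the standard tubes; this is made possible precisely by the segregation property (iii) of \Cref{theorem existence standard Morse functions}, which confines all forbidden critical indices to $\{f_0 < z_1\} \cup \{f_0 > z_N\}$, a region disjoint from the tubes.
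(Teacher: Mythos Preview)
Your proof is correct and follows essentially the same route as the paper: puncture $M$ twice, invoke \Cref{theorem existence standard Morse functions} on the resulting cobordism $Y$, use the segregation property $(iii)$ together with the $(k-1)$-connectivity of $M$ to cancel the forbidden indices via Milnor's handle-trading, and finally rescale the normal disc by a radial diffeomorphism to obtain the constant $C = \varepsilon'/\varepsilon$. The only differences are cosmetic: the paper performs the cancellation on $Y$ before capping off with the extremal discs, whereas you cap off first and then cancel inside $M_{\pm}$; and the paper verifies the hypotheses for cancellation by directly computing $H_{j}(Y, Y_{0}) = H_{j}(Y, Y_{1}) = 0$, whereas you deduce $(k-1)$-connectivity of $M_{\pm}$ from the $d$-connectedness (resp.\ $(n-d-1)$-connectedness) of the inclusions $M_{\pm} \hookrightarrow M$.

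Two small points of precision: when you write ``eliminate all critical points of index $< k$'' you of course mean indices $1, \dots, k-1$, keeping the unique index-$0$ minimum; and since the tubes $\phi_{i}(L_{i} \times \operatorname{int} D^{n-d}_{\varepsilon'})$ have $f_{0}$-image $(z_{i}-\varepsilon', z_{i}+\varepsilon')$ rather than lying inside $[z_{1}, z_{N}]$, you should shrink $\varepsilon'$ (or adjust $\eta$) so that the cancellation regions $M_{\pm}$ are genuinely disjoint from the tubes. Neither affects the argument.
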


\begin{proof}
Choose disjoint embeddings $\iota_{0}, \iota_{1} \colon D^{n} \rightarrow M^{n}$ which are also disjoint to the closures of the images of the embeddings $\phi_{i}$.
We apply \Cref{theorem existence standard Morse functions} to the compact $n$-manifold $Y^{n} = M^{n} \setminus \iota_{0}(\operatorname{int} D^{n}) \cup \iota_{1}(\operatorname{int} D^{n})$ with boundary components $Y_{0}^{n-1} = \iota_{0}(S^{n-1})$ and $Y_{1}^{n-1} = \iota_{1}(S^{n-1})$ to obtain a Morse function $g \colon Y^{n} \rightarrow [0, 1]$ with regular level sets $Y_{0} = g^{-1}(0)$ and $Y_{1} = g^{-1}(1)$ satisfying properties $(i)$ to $(iii)$.
By construction, $Y^{n}$, $Y_{0}^{n-1}$ and $Y_{1}^{n-1}$ are simply connected, and we have
\begin{displaymath}
\qquad H_{j}(Y, Y_{0}; \mathbb{Z}) = H_{j}(Y, Y_{1}; \mathbb{Z}) = 0, \qquad j = 1, \dots, k-1.
\end{displaymath}
Hence, in view of properties $(ii)$ and $(iii)$, we can use standard arguments of Morse theory (including \cite[Theorem 8.1, page 100]{mil2} as well as the techniques used in the proof of \cite[Theorem 7.8, p. 97]{mil2}) to eliminate all critical points of $g$ of indices $1, \dots, k-1$ by a homotopy with compact support in $g^{-1}((0, z_{1}))$, and all critical points of $g$ of indices $n-k+1, \dots, n-1$ by a homotopy with compact support in $g^{-1}((z_{N}, 1))$.
Therefore, we may replace property $(ii)$ by the stronger assumption that all indices of critical points of $g$ are contained in the set $\{k, \dots, n-k\}$.

Set $C = \varepsilon'/\varepsilon \in (0, 1)$.
By means of the isotopy extension lemma \cite[Theorem 1.3, p. 180]{hir} we construct a diffeomorphism $\rho \colon \operatorname{int} D^{n-d}_{2\varepsilon} \stackrel{\cong}{\longrightarrow} \operatorname{int} D^{n-d}_{2\varepsilon}$ such that $\rho(v) = C \cdot v$ for $||v|| < \varepsilon$ and $\rho(v) = v$ for $||v|| > 3 \varepsilon/2$.
Let $\Xi$ denote the automorphism of $Y^{n}$ which extends the automorphisms $\phi_{i} \circ (\operatorname{id}_{L_{i}} \times \rho) \circ \phi_{i}^{-1}$ of $\phi_{i}(L_{i}^{d} \times \operatorname{int} D_{2 \varepsilon}^{n-d})$, $i = 1, \dots, N$, by the identity map.
Then, by property $(i)$ of $g$,
\begin{displaymath}
(g \circ \Xi)(\phi_{i}(u, v)) = C \cdot v_{1} + z_{i}, \qquad (u, v) \in L_{i}^{d} \times \operatorname{int} D_{\varepsilon}^{n-d}, \qquad i = 1, \dots, N.
\end{displaymath}
Finally, we use \cite[Lemma 3.7, p. 26]{mil2} to extend $g \circ \Xi \colon Y^{n} \rightarrow [0, 1]$ to the desired Morse function $f \colon M^{n} \rightarrow \mathbb{R}$ with two additional critical points, namely one at $\iota_{0}(0)$ of index $0$, and one at $\iota_{1}(0)$ of index $n$.
\end{proof}

\begin{remark}
\Cref{corollary constrained standard Morse functions} will eventually be brought to bear in the proof of \Cref{theorem handle extension} (see \Cref{proof of handle extension theorem}) in the case where $L_{i} = S^{d}$, and the embeddings $\phi_{i}$ play the role of attaching maps of $(d+1)$-handles.
\end{remark}

\subsubsection{Constructing Fold Maps from Local Handles into the Plane}\label{constructing fold maps from local handles into the plane}
We refer to \cite[Section 7.2, pp. 178ff]{wra} for complete details of the construction presented here.

Fix a pair $(m, \lambda)$ consisting of integers $m \geq 2$ and $\lambda \in \{1, \dots, m-1\}$.
Define a Morse function $\mu \colon \mathbb{R}^{m} \rightarrow \mathbb{R}$ with a single critical point of Morse index $\lambda$ at the origin of $\mathbb{R}^{m} = \mathbb{R}^{\lambda} \times \mathbb{R}^{m-\lambda}$ by
\begin{displaymath}
\mu \colon \mathbb{R}^{\lambda} \times \mathbb{R}^{m-\lambda} \rightarrow \mathbb{R}, \qquad \mu(p, q) = -||p||^{2} + ||q||^{2}.
\end{displaymath}
A gradient-like vector field $\upsilon$ for $\mu$ is given by
\begin{displaymath}
\upsilon \colon \mathbb{R}^{\lambda} \times \mathbb{R}^{m-\lambda} \rightarrow \mathbb{R}^{m}, \qquad \upsilon(p, q) = (-p, q).
\end{displaymath}
Note that the flow $\eta$ of $\upsilon$ is given by
\begin{displaymath}
\eta \colon \mathbb{R}^{\lambda} \times \mathbb{R}^{m-\lambda} \times \mathbb{R} \rightarrow \mathbb{R}^{m}, \qquad \eta(p, q, t) = (e^{-t} p, e^{t} q).
\end{displaymath}
Indeed, for any point $x = (p, q) \in \mathbb{R}^{\lambda} \times \mathbb{R}^{m-\lambda}$, the integral curve
\begin{displaymath}
\eta_{x} \colon \mathbb{R} \rightarrow \mathbb{R}^{\lambda} \times \mathbb{R}^{m-\lambda}, \qquad \eta_{x}(t) = \eta(p, q, t) = (e^{-t} p, e^{t} q),
\end{displaymath}
satisfies $\eta_{x}(0) = x$ and $\eta_{x}'(t) = (-e^{-t} p, e^{t} q) = \upsilon(\eta_{x}(t))$ for all $t \in \mathbb{R}$.

\begin{definition}\label{k-handle definition}
Given $\varepsilon, \delta > 0$, we define the \emph{local $(\varepsilon, \delta)$-handle} by
\begin{displaymath}
H^{\varepsilon}_{\delta} = \{(p, q) \in \mathbb{R}^{\lambda} \times \mathbb{R}^{m-\lambda}; \; -\delta^{2} \leq -||p||^{2} + ||q||^{2} \leq \delta^{2}, \; ||p|| \cdot ||q|| < \varepsilon \cdot \sqrt{\varepsilon^{2} + \delta^{2}}\}.
\end{displaymath}
\end{definition}

The following two lemmas (see \cite[Proposition 7.2.4, p. 180]{wra}) are essentially observed in the proof of \cite[Theorem 3.12, page 30]{mil2}.
Note that one part of the boundary of $H^{\varepsilon}_{\delta}$ is contained in the slice $\mu^{-1}(-\delta^{2})$, and forms a tubular neighborhood of the left-hand sphere of the unique critical point of $\mu$.

\begin{lemma}\label{handle properties}
The local $(\varepsilon, \delta)$-handle $H^{\varepsilon}_{\delta}$ is an $m$-dimensional submanifold of $\mathbb{R}^{m} = \mathbb{R}^{\lambda} \times \mathbb{R}^{m-\lambda}$ with boundary $\partial H^{\varepsilon}_{\delta} = L^{\varepsilon}_{\delta} \sqcup R^{\varepsilon}_{\delta}$, where $L^{\varepsilon}_{\delta} := H^{\varepsilon}_{\delta} \cap \mu^{-1}(- \delta^{2})$ and $R^{\varepsilon}_{\delta} := H^{\varepsilon}_{\delta} \cap \mu^{-1}(+ \delta^{2})$.
There are diffeomorphisms
\begin{align*}
\lambda^{\varepsilon}_{\delta} \colon S^{\lambda-1} \times \operatorname{int} D_{\varepsilon}^{m-\lambda}
&\stackrel{\cong}{\longrightarrow} L^{\varepsilon}_{\delta}, \qquad \lambda^{\varepsilon}_{\delta}(u, v) = (\sqrt{||v||^{2} + \delta^{2}} \cdot u, v), \\
\rho^{\varepsilon}_{\delta} \colon \operatorname{int} D_{\varepsilon}^{\lambda} \times S^{m-\lambda-1} &\stackrel{\cong}{\longrightarrow} R^{\varepsilon}_{\delta}, \qquad \rho^{\varepsilon}_{\delta}(u, v) = (u, \sqrt{||u||^{2} + \delta^{2}} \cdot v).
\end{align*}
\end{lemma}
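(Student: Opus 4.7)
My plan is to verify the three assertions in order: that $H^{\varepsilon}_{\delta}$ is a smooth $m$-submanifold with boundary, that its boundary decomposes as claimed, and that $\lambda^{\varepsilon}_{\delta}$ and $\rho^{\varepsilon}_{\delta}$ are diffeomorphisms with the obvious inverses. The only substantive observation needed throughout is that the real function $\sigma(t) = t \sqrt{t^{2} + \delta^{2}}$ is strictly increasing and smooth on $[0, \infty)$ with smooth inverse, so the constraints $\|p\|\cdot\|q\| < \varepsilon \sqrt{\varepsilon^{2} + \delta^{2}}$ decouple cleanly on $\mu^{-1}(\pm \delta^{2})$.

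First I would establish the submanifold claim. The set $H^{\varepsilon}_{\delta}$ is carved out of $\mathbb{R}^{m}$ by the open condition $\|p\|\cdot\|q\| < \varepsilon \sqrt{\varepsilon^{2} + \delta^{2}}$ together with the two closed inequalities $\mu \geq -\delta^{2}$ and $\mu \leq \delta^{2}$. The open condition only restricts us to an open subset of $\mathbb{R}^{m}$ and contributes no boundary. The two closed inequalities live on the two level sets $\mu^{-1}(\pm \delta^{2})$, which are disjoint, so no corners can appear; it remains to check that $\pm \delta^{2}$ are regular values of $\mu$ along the region in question. Since $d\mu_{(p,q)} = 2(-p, q)$ vanishes only at the origin, and since $(p,q) \in \mu^{-1}(-\delta^{2})$ forces $\|p\|^{2} = \|q\|^{2} + \delta^{2} > 0$ (and symmetrically on $\mu^{-1}(\delta^{2})$), regularity holds, and $H^{\varepsilon}_{\delta}$ is a smooth $m$-manifold with boundary equal to the disjoint union $L^{\varepsilon}_{\delta} \sqcup R^{\varepsilon}_{\delta}$.

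Next I would verify $\lambda^{\varepsilon}_{\delta}$ is a diffeomorphism. For $(u,v) \in S^{\lambda - 1} \times \operatorname{int} D^{m-\lambda}_{\varepsilon}$, a direct computation shows $\mu(\lambda^{\varepsilon}_{\delta}(u,v)) = -(\|v\|^{2} + \delta^{2}) + \|v\|^{2} = -\delta^{2}$, and the product $\|p\|\cdot\|q\| = \sqrt{\|v\|^{2} + \delta^{2}} \cdot \|v\| = \sigma(\|v\|)$ lies strictly below $\sigma(\varepsilon) = \varepsilon \sqrt{\varepsilon^{2} + \delta^{2}}$ by monotonicity of $\sigma$ and the condition $\|v\| < \varepsilon$. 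Hence $\lambda^{\varepsilon}_{\delta}$ takes values in $L^{\varepsilon}_{\delta}$. Conversely, given any $(p,q) \in L^{\varepsilon}_{\delta}$, the constraint $\|p\|^{2} = \|q\|^{2} + \delta^{2}$ forces $\|p\| = \sqrt{\|q\|^{2} + \delta^{2}} > 0$; setting $u = p/\|p\|$ and $v = q$ produces the unique preimage, and the open condition translates via monotonicity of $\sigma$ into $\|v\| < \varepsilon$. Both $\lambda^{\varepsilon}_{\delta}$ and its inverse are visibly smooth, giving the desired diffeomorphism. The argument for $\rho^{\varepsilon}_{\delta}$ is entirely symmetric after swapping the roles of $p$ and $q$.

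There is no serious obstacle here; the lemma is essentially a bookkeeping exercise around the two one-variable identities $\mu(\sqrt{\|v\|^{2}+\delta^{2}}\cdot u, v) = -\delta^{2}$ and $\sigma(\|v\|) < \sigma(\varepsilon)$, together with a regular-value check on $\mu$. The only place demanding a moment of care is the verification that the product condition in the definition of $H^{\varepsilon}_{\delta}$ really matches the condition $\|v\| < \varepsilon$ on the left-hand (respectively $\|u\| < \varepsilon$ on the right-hand) sphere: this is precisely what the constant $\varepsilon \sqrt{\varepsilon^{2} + \delta^{2}}$ in \Cref{k-handle definition} is tuned for, and it follows from the strict monotonicity of $\sigma$.
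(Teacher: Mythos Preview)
Your proof is correct. The paper does not actually supply a proof of this lemma; it merely remarks that the statement is ``essentially observed in the proof of \cite[Theorem 3.12, page 30]{mil2}'' and refers to \cite[Proposition 7.2.4, p. 180]{wra} for details, so your argument is precisely the kind of elementary verification the reader is expected to reconstruct, and it follows the standard route one finds in Milnor's treatment.
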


Let $Z:= (\mathbb{R}^{\lambda} \times \{0\}) \cup (\{0\} \times \mathbb{R}^{m-\lambda})$.
The flow $\eta$ of $\mu$ is used in the following lemma to express $H^{\varepsilon}_{\delta} \setminus Z$ in terms of $L^{\varepsilon}_{\delta} \setminus Z$.

\begin{lemma}\label{lemma handle diffeomorphism}
Let $\varepsilon, \delta > 0$.
We define manifolds with boundary by
\begin{align*}
X^{\varepsilon}_{\delta} &:= \{(v, t) \in \mathbb{R}^{m-\lambda} \times \mathbb{R}; \; 0 < ||v|| < \varepsilon, \; 0 \leq t \leq 1/2 \cdot \operatorname{log}(1+ \delta^{2}/||v||^{2})\}, \\
Y^{\varepsilon}_{\delta} &:= \{(x, t) = (p, q, t) \in L^{\varepsilon}_{\delta} \times \mathbb{R}; \; q \neq 0, \; 0 \leq t \leq \operatorname{log}(||p||/||q||)\}.
\end{align*}
The diffeomorphism $\lambda^{\varepsilon}_{\delta}$ (see \Cref{handle properties}) induces a diffeomorphism
\begin{displaymath}
S^{\lambda-1} \times X^{\varepsilon}_{\delta} \stackrel{\cong}{\longrightarrow} Y^{\varepsilon}_{\delta}, \qquad (u, v, t) \mapsto (\lambda^{\varepsilon}_{\delta}(u, v), t).
\end{displaymath}
The flow $\eta$ of $\mu$ restricts to a diffeomorphism
\begin{displaymath}
Y^{\varepsilon}_{\delta} \stackrel{\cong}{\longrightarrow} H^{\varepsilon}_{\delta} \setminus Z, \qquad (x, t) = (p, q, t) \mapsto \eta(p, q, t) = (e^{-t} p, e^{t} q).
\end{displaymath}
By composition we obtain a diffeomorphism
\begin{displaymath}
\Lambda^{\varepsilon}_{\delta} \colon S^{\lambda-1} \times X^{\varepsilon}_{\delta} \stackrel{\cong}{\longrightarrow} H^{\varepsilon}_{\delta} \setminus Z, \qquad (u, v, t) \mapsto (e^{-t} \cdot \sqrt{||v||^{2} + \delta^{2}} \cdot u, e^{t} \cdot v).
\end{displaymath}
\end{lemma}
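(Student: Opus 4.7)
My plan is to verify the three claimed diffeomorphisms in the order they are stated, relying in each case on an explicit computation together with the level-set behavior of $\mu$ along the flow $\eta$.

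First I would treat the induced map $S^{\lambda-1} \times X^{\varepsilon}_{\delta} \to Y^{\varepsilon}_{\delta}$, $(u,v,t) \mapsto (\lambda^{\varepsilon}_{\delta}(u,v), t)$. The diffeomorphism $\lambda^{\varepsilon}_{\delta} \colon S^{\lambda-1} \times \operatorname{int} D^{m-\lambda}_{\varepsilon} \to L^{\varepsilon}_{\delta}$ of \Cref{handle properties} is already known, so I only need to match the $t$-ranges. Setting $(p,q) = \lambda^{\varepsilon}_{\delta}(u,v) = (\sqrt{\|v\|^{2} + \delta^{2}} \cdot u, v)$, we have $\|p\| = \sqrt{\|v\|^{2}+\delta^{2}}$ and $\|q\| = \|v\|$, hence
\[
\log(\|p\|/\|q\|) = \tfrac{1}{2} \log\!\bigl(1 + \delta^{2}/\|v\|^{2}\bigr).
\]
The condition $q \neq 0$ translates to $\|v\| > 0$, so the defining constraints of $X^{\varepsilon}_{\delta}$ and $Y^{\varepsilon}_{\delta}$ are in bijective correspondence, and the map is a diffeomorphism.

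Next I would analyze the restriction of $\eta$ to $Y^{\varepsilon}_{\delta}$. For $(p,q) \in L^{\varepsilon}_{\delta}$ one has $\|q\|^{2} - \|p\|^{2} = -\delta^{2}$, and a direct computation gives
\[
\mu(\eta(p,q,t)) = -e^{-2t}\|p\|^{2} + e^{2t}\|q\|^{2},
\]
whose derivative in $t$ is strictly positive whenever $q \neq 0$. Solving $\mu(\eta(p,q,t)) = \delta^{2}$ and using $\|p\|^{2} = \|q\|^{2}+\delta^{2}$, one finds $e^{2t} = 1 + \delta^{2}/\|q\|^{2}$, i.e.\ $t = \log(\|p\|/\|q\|)$. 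Hence as $t$ runs from $0$ to $\log(\|p\|/\|q\|)$, the point $\eta(p,q,t)$ sweeps monotonically from $L^{\varepsilon}_{\delta}$ to $R^{\varepsilon}_{\delta}$ through level sets of $\mu$ inside $[-\delta^{2}, \delta^{2}]$. Since $\|e^{-t}p\| \cdot \|e^{t}q\| = \|p\|\cdot\|q\|$, the second defining inequality of $H^{\varepsilon}_{\delta}$ is automatic from membership in $L^{\varepsilon}_{\delta}$. Finally, $\|p\| \geq \delta > 0$ and $q \neq 0$ imply $\eta(p,q,t) \notin Z$ for all $t$, so $\eta$ really lands in $H^{\varepsilon}_{\delta} \setminus Z$.

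For bijectivity, given any $(p',q') \in H^{\varepsilon}_{\delta} \setminus Z$, the integral curve through $(p',q')$ crosses each level set of $\mu$ inside $[-\delta^{2},\delta^{2}]$ exactly once, and in particular hits $\mu^{-1}(-\delta^{2})$ at a unique $t \leq 0$; the resulting pair $(p,q) = \eta(p',q',t)$ lies in $L^{\varepsilon}_{\delta}$ because $\|p\|\cdot\|q\|$ is preserved along the flow, so it remains $< \varepsilon\sqrt{\varepsilon^{2}+\delta^{2}}$. This produces the smooth inverse. The composition of the two diffeomorphisms then yields $\Lambda^{\varepsilon}_{\delta}(u,v,t) = \eta(\sqrt{\|v\|^{2}+\delta^{2}}\cdot u,\ v,\ t) = (e^{-t}\sqrt{\|v\|^{2}+\delta^{2}}\cdot u,\ e^{t}v)$, as stated.

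The main (and really only) subtlety is the monotonicity/invertibility of $t \mapsto \mu(\eta(p,q,t))$ on the prescribed interval; everything else reduces to plugging in the formulas for $\lambda^{\varepsilon}_{\delta}$ and $\eta$ and reading off that $\|p\|\cdot\|q\|$ is a flow invariant, so no obstruction remains once that computation is carried out.
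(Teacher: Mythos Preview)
Your argument is correct. The paper itself does not supply a proof of this lemma; it merely records that the statement (together with \Cref{handle properties}) is ``essentially observed in the proof of \cite[Theorem 3.12, page 30]{mil2}'' and points to \cite[Proposition 7.2.4]{wra} for details. What you have written is precisely the expected verification: matching the $t$-ranges via $\log(\|p\|/\|q\|) = \tfrac{1}{2}\log(1+\delta^{2}/\|v\|^{2})$, using monotonicity of $t \mapsto \mu(\eta(p,q,t))$ to see that the flow sweeps each trajectory bijectively through the level sets in $[-\delta^{2},\delta^{2}]$, and invoking the flow invariant $\|p\|\cdot\|q\|$ to keep everything inside $H^{\varepsilon}_{\delta}$. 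One tiny notational slip: in your inverse construction you use the same letter $t$ for the (nonpositive) backward flow time and implicitly for the (nonnegative) coordinate in $Y^{\varepsilon}_{\delta}$; writing the backward time as $s \leq 0$ and the preimage as $((p,q),-s)$ would make the bijection cleaner, but the mathematics is unaffected.
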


The next result implements the technique of forward handles (see \cite[Fig. 29]{gaykir}).

\begin{proposition}\label{forward k handle}
Let $\varepsilon > 0$.
If $\delta > 0$ is sufficiently small, then there exists a smooth function $\nu \colon H^{\varepsilon}_{\delta} \rightarrow \mathbb{R}$ with the following properties:
\begin{enumerate}[(i)]
\item The map $(\mu|_{H^{\varepsilon}_{\delta}}, \nu) \colon H^{\varepsilon}_{\delta} \rightarrow \mathbb{R}^{2}$ is a fold map whose singular locus is the fold line $S_{\delta} := \{0\} \times [-\delta, \delta] \times \{0\} \subset \mathbb{R}^{\lambda} \times \mathbb{R} \times \mathbb{R}^{m-1-\lambda}$ of absolute index $\operatorname{max}\{\lambda, m-1-\lambda\}$.
\item For $0 < |s| \leq \delta^{2}$ the set of critical points of the restriction $\nu|_{H^{\varepsilon}_{\delta, s}} \colon H^{\varepsilon}_{\delta, s} \rightarrow \mathbb{R}$ of $\nu$ to the slice $H^{\varepsilon}_{\delta, s} := H^{\varepsilon}_{\delta} \cap \mu^{-1}(s)$ is given by
\begin{displaymath}
H^{\varepsilon}_{\delta, s} \cap S_{\delta} =
\begin{cases}
\emptyset, \quad s < 0, \\
\{(0, \pm\sqrt{s}, 0)\} =: \{x^{\pm}_{s}\}, \quad s > 0.
\end{cases}
\end{displaymath}
We have $H^{\varepsilon}_{\delta, -\delta^{2}} = L^{\varepsilon}_{\delta}$, and the restriction $\nu|_{L^{\varepsilon}_{\delta}} \colon L^{\varepsilon}_{\delta} \rightarrow \mathbb{R}$ is the projection $(p,q) \mapsto q_{1}$.
Moreover, if $s > 0$, then $x^{-}_{s}$ is a non-degenerate critical point of $\nu|_{H^{\varepsilon}_{\delta, s}}$ of Morse index $\lambda$ at level $-\sqrt{s}$, and $x^{-}_{s}$ is a non-degenerate critical point of $\nu|_{H^{\varepsilon}_{\delta, s}}$ of Morse index $m-\lambda-1$ at level $+\sqrt{s}$.
\item There exists $\varepsilon' \in (0, \varepsilon)$ such that $\nu(\eta_{x}(t)) = \nu(\eta_{x}(0))$ for all $(x, t) \in Y^{\varepsilon}_{\delta} \setminus \overline{Y^{\varepsilon'}_{\delta}}$.
\end{enumerate}
\end{proposition}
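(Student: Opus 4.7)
The plan is to follow the forward $\lambda$-handle picture from \Cref{remark forward handle}: set $\nu$ equal to $q_{1}$ on an open neighborhood of the fold line $S_{\delta}$ so that $(\mu,\nu)$ literally realizes the forward $\lambda$-handle there, and modify $\nu$ elsewhere so that it becomes invariant under the flow $\eta$ of $\upsilon$. The transition will be arranged by a cutoff depending on the flow-invariant quantity $\|p\|^{2}\|q\|^{2}$.

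First I would verify that the naive candidate $F(p,q) := (\mu(p,q), q_{1})$ is already a fold map on $\mathbb{R}^{m}$ realizing the asserted singular structure. Composing $F$ with the target diffeomorphism $(a,b) \mapsto (a-b^{2}, b)$ produces $(p,q) \mapsto (-\|p\|^{2} + q_{2}^{2} + \cdots + q_{m-\lambda}^{2},\, q_{1})$, a $q_{1}$-parametrized family of Morse functions in the remaining variables, each with a unique non-degenerate critical point of index $\lambda$ at the origin of its fiber. Hence the singular locus of $F$ is the $q_{1}$-axis, which intersects $H^{\varepsilon}_{\delta}$ precisely in $S_{\delta}$, and the absolute index is $\max\{\lambda, m-1-\lambda\}$. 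Given (i), property (ii) is read off from this normal form together with the identity $\mu|_{S_{\delta}} = q_{1}^{2}$: no critical points on $H^{\varepsilon}_{\delta,s}$ for $s<0$; for $s>0$, the two intersection points $x_{s}^{\pm} = (0, \pm\sqrt{s}, 0)$ are non-degenerate critical points of Morse indices $\lambda$ and $m-\lambda-1$ at levels $\mp\sqrt{s}$, respectively, by a short Hessian computation in the implicit parametrization of $\mu^{-1}(s)$ by $(p, q_{2}, \ldots, q_{m-\lambda})$.

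For the flow-invariant modification, use $\Lambda^{\varepsilon}_{\delta}$ of \Cref{lemma handle diffeomorphism} to define $\tilde{\nu} := v_{1} \circ (\Lambda^{\varepsilon}_{\delta})^{-1}$ on $H^{\varepsilon}_{\delta} \setminus Z$; this is $\eta$-invariant by construction and agrees with $q_{1}$ on $L^{\varepsilon}_{\delta}$. Inverting the identity $\|v\|^{2}(\|v\|^{2} + \delta^{2}) = \|p\|^{2}\|q\|^{2}$ gives
\begin{displaymath}
\|v\|^{2} = \tfrac{1}{2}\bigl(-\delta^{2} + \sqrt{\delta^{4} + 4\|p\|^{2}\|q\|^{2}}\bigr),
\end{displaymath}
which is smooth on $H^{\varepsilon}_{\delta}$ and vanishes precisely on $Z \cap H^{\varepsilon}_{\delta}$. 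Fix $\varepsilon' \in (0, \varepsilon)$, pick $r_{1} \in (0, (\varepsilon')^{2})$, and take a smooth monotone cutoff $\rho \colon [0,\infty) \to [0,1]$ with $\rho \equiv 1$ on $[0, r_{1}]$ and $\rho \equiv 0$ on $[(\varepsilon')^{2}, \infty)$. Set $\chi := \rho(\|v\|^{2})$ and define
\begin{displaymath}
\nu := \chi \cdot q_{1} + (1-\chi) \cdot \tilde{\nu},
\end{displaymath}
extended by $\nu := q_{1}$ on the open set $\{\chi = 1\}$, which contains a neighborhood of $Z \cap H^{\varepsilon}_{\delta}$ and hence of $S_{\delta}$. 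Property (iii) is then immediate: $\Lambda^{\varepsilon}_{\delta}(Y^{\varepsilon}_{\delta} \setminus \overline{Y^{\varepsilon'}_{\delta}})$ lies in the $\eta$-invariant set $\{\|v\|^{2} \geq (\varepsilon')^{2}\}$, where $\chi \equiv 0$ and $\nu = \tilde{\nu}$ is flow-invariant.

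The central obstacle is verifying (i) on the interpolation annulus $A := \{0 < \chi < 1\}$. The closure $\overline{A}$ is compact in $H^{\varepsilon}_{\delta}$, with $\|p\|$ and $\|q\|$ bounded below, so the reference map $(\mu, q_{1})$ is a submersion on $\overline{A}$. Writing $\tilde{\nu} = \psi \cdot q_{1}$ with $\psi := \|p\|/\sqrt{\|v\|^{2}+\delta^{2}}$ and using the $H^{\varepsilon}_{\delta}$-identity $\|p\|^{2} = \|q\|^{2} + \mu$, a direct expansion yields $\psi = (1 + \mu/\|q\|^{2})^{1/4}\bigl(1 + O(\delta^{2})\bigr)$ uniformly on $\overline{A}$. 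Differentiating shows that the leading part of $d\psi$ is a scalar multiple of $d\mu$, the remainder being $O(\delta^{2})$; combined with $|q_{1} - \tilde{\nu}| = O(\delta^{2})$ and the fact that $d\chi$ and $d\tilde{\nu}$ both annihilate $\upsilon$ while $d\mu(\upsilon) = 2(\|p\|^{2}+\|q\|^{2})$ is bounded below, this leads to
\begin{displaymath}
d\mu \wedge d\nu = d\mu \wedge dq_{1} + O(\delta^{2})
\end{displaymath}
uniformly on $\overline{A}$. Since $d\mu \wedge dq_{1}$ is bounded away from zero there, $(\mu, \nu)$ remains a submersion on $A$ for all sufficiently small $\delta > 0$; this quantitative estimate is the one place where the hypothesis ``$\delta$ sufficiently small'' is essentially used.
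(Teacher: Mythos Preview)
Your construction is correct and yields the same conclusion as the paper, but the route is genuinely different in two respects. First, the paper interpolates multiplicatively in flow coordinates: writing $(\nu \circ \eta)(x,t) = e^{t\,\xi(\|v\|^{2})}\,v_{1}$ for a cutoff $\xi$, so that the two regimes $e^{t}v_{1}=q_{1}$ and $v_{1}=\tilde\nu$ are bridged by exponentials; you instead take the additive convex combination $\chi q_{1}+(1-\chi)\tilde\nu$. Second, and more substantially, the paper verifies the submersion property on the transition region by an explicit Jacobian computation in the coordinates $(u,v,t)$ of \Cref{lemma handle diffeomorphism}, analyzing several $2\times 2$ minors case by case and extracting an explicit smallness condition on $\delta$. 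Your argument replaces this by the perturbative estimate $d\mu\wedge d\nu = d\mu\wedge dq_{1} + O(\delta^{2})$ on $\overline{A}$, which is cleaner conceptually but requires you to check carefully that all bounds are uniform in $\delta$ (you do: $\|p\|,\|q\|$ are bounded above and below on $\overline{A}$ independently of small $\delta$, and $d\chi$, $d\psi$ are controlled there). The key observation driving your estimate---that the leading part of $d\psi$ is a scalar multiple of $d\mu$---is what makes the wedge with $d\mu$ collapse to $O(\delta^{2})$; this is a nice structural point that the paper's coordinate computation obscures.

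Two small slips to fix: the identity on $H^{\varepsilon}_{\delta}$ is $\|p\|^{2}=\|q\|^{2}-\mu$, not $+\mu$ (this does not affect your $O(\delta^{2})$ conclusion since $|\mu|\le\delta^{2}$), and your ``levels $\mp\sqrt{s}$'' for $x_{s}^{\pm}$ is inverted, since $\nu(x_{s}^{\pm})=q_{1}(0,\pm\sqrt{s},0)=\pm\sqrt{s}$.
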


\begin{proof}
Let $\varepsilon_{0} := \varepsilon/3$.
First, we construct a smooth map $\nu_{<} \colon H^{\varepsilon_{0}}_{\delta} \rightarrow \mathbb{R}$ on the open subset $H^{\varepsilon_{0}}_{\delta} \subset H^{\varepsilon}_{\delta}$, and a smooth map $\nu_{>} \colon H^{\varepsilon}_{\delta}\setminus Z \rightarrow \mathbb{R}$ on the open subset $H^{\varepsilon}_{\delta}\setminus Z \subset H^{\varepsilon}_{\delta}$ such that $\nu_{<}$ and $\nu_{>}$ agree on the intersection $H^{\varepsilon_{0}}_{\delta} \cap (H^{\varepsilon}_{\delta}\setminus Z) = H^{\varepsilon_{0}}_{\delta} \setminus Z$.
Let $\nu_{<}$ be the smooth map given by the projection to the $(\lambda+1)$-th coordinate:
\begin{displaymath}
\nu_{<} \colon H^{\varepsilon_{0}}_{\delta} \rightarrow \mathbb{R}, \qquad x = (p, q) \mapsto q_{1}.
\end{displaymath}
For the construction of $\nu_{>}$, choose a smooth map $\xi \colon [0, \infty) \rightarrow \mathbb{R}$ such that
$\xi([0, \infty)) \subset [0, 1]$, $\xi(r) = 1$ for $r < \varepsilon_{0}^{2}$ and $\xi(r) = 0$ for $r > (2 \varepsilon_{0})^{2}$.
Then, use \Cref{lemma handle diffeomorphism} to define the smooth map $\nu_{>} \colon H^{\varepsilon}_{\delta}\setminus Z \rightarrow \mathbb{R}$ by requiring that
$$(\nu_{>} \circ \eta)(x, t) = e^{t \cdot \xi(||q||^{2})} q_{1}$$
for all $(x, t) = (p, q, t) \in Y^{\varepsilon}_{\delta}$.
Since $\nu_{>}$ and $\nu_{<}$ agree at all $H^{\varepsilon_{0}}_{\delta} \setminus Z$
$$(\nu_{>} \circ \eta)(x, t) = e^{t \cdot \xi(||q||^{2})} q_{1} = e^{t} q_{1} = (\nu_{<} \circ \eta)(x, t)$$
for all $(x, t) = (p, q, t) \in Y^{\varepsilon}_{\delta}$ because $||q|| < \varepsilon_{0}$ whenever $x = (p, q) \in L^{\varepsilon_{0}}_{\delta}$.

It remains to show that the glued map
\begin{displaymath}
\nu \colon H^{\varepsilon}_{\delta} \rightarrow \mathbb{R}, \qquad \nu(x) = \begin{cases}
\nu_{<}(x), \quad \text{if} \quad x \in H^{\varepsilon_{0}}_{\delta}, \\
\nu_{>}(x), \quad \text{if} \quad x \in H^{\varepsilon}_{\delta} \setminus Z,
\end{cases}
\end{displaymath}
satisfies the desired properties when $\delta > 0$ is sufficiently small.

$(i)$
By construction, $(\mu|_{H^{\varepsilon}_{\delta}}, \nu)$ restricts on $H^{\varepsilon_{0}}_{\delta}$ to the forward $\lambda$-handle discussed in \Cref{remark forward handle}.
Let us check that $(\mu|_{H^{\varepsilon}_{\delta}}, \nu)$ is a submersion on $H^{\varepsilon}_{\delta} \setminus Z$.
Since precomposition with the diffeomorphisms $\Phi^{\varepsilon}_{-}$ of \Cref{handle properties} is constant in the variable $u \in S^{\lambda-1}$, we have to show that the following map is a submersion:
\begin{displaymath}
F \colon X^{\varepsilon}_{\delta} \rightarrow \mathbb{R}^{2}, \qquad (v, t) \mapsto (-\delta^{2}e^{-2t} + ||v||^{2}(e^{2t} - e^{-2t}), e^{t \cdot \xi(||v||^{2})} v_{1}).
\end{displaymath}
Writing $r := ||v||^{2}$, the Jacobian of $F$ at $(v, t)$ is given by the $2 \times (m-\lambda+1)$-matrix
{\footnotesize
\begin{displaymath}
\left(\begin{matrix}
2 v_{1}(e^{2t} - e^{-2t}) & 2 v_{2}(e^{2t} - e^{-2t}) & \dots & 2 v_{m-\lambda}(e^{2t} - e^{-2t}) & 2 \delta^{2}e^{-2t} + 2r(e^{2t} + e^{-2t}) \\
(1 + 2t\xi'(r)v_{1}^{2})e^{t \cdot \xi(r)} & 2t\xi'(r)v_{1}v_{2}e^{t \cdot \xi(r)} & \dots & 2t\xi'(r)v_{1}v_{m-\lambda}e^{t \cdot \xi(r)} & e^{t \cdot \xi(r)} v_{1} \xi(r)
\end{matrix}
\right)
\end{displaymath}}
For $i \in \{2, \dots, m-\lambda\}$ the determinant of the $2 \times 2$-submatrix given by the first and the $i$-th column is
\begin{displaymath}
\operatorname{det}\left(
\begin{matrix}
2 v_{1}(e^{2t} - e^{-2t}) & 2 v_{i}(e^{2t} - e^{-2t}) \\
(1 + 2t\xi'(r)v_{1}^{2})e^{t \cdot \xi(r)} & 2t\xi'(r)v_{1}v_{i}e^{t \cdot \xi(r)}
\end{matrix}\right)
= -2 v_{i}(e^{2t} - e^{-2t})e^{t \cdot \xi(r)}.
\end{displaymath}
This determinant vanishes if and only if $t = 0$ or $v_{i} = 0$. Thus, the rank of the Jacobian of $F$ at $(v, t)$ remains to be investigated only in the case that $t = 0$ or $v_{2} = \dots = v_{m-\lambda} = 0$.
For this purpose, we consider the $2 \times 2$-submatrix of the Jacobian of $F$ at $(v, t)$ consisting of the first and the last column, which is
\begin{align*}
&\operatorname{det}\left(
\begin{matrix}
2 v_{1}(e^{2t} - e^{-2t}) & 2 \delta^{2}e^{-2t} + 2r(e^{2t} + e^{-2t}) \\
(1 + 2t\xi'(r)v_{1}^{2})e^{t \cdot \xi(r)} & e^{t \cdot \xi(r)} v_{1} \xi(r)
\end{matrix}\right) \\
&= 2 v_{1}^{2}(e^{2t} - e^{-2t})e^{t \cdot \xi(r)} \xi(r) - (2 \delta^{2}e^{-2t} + 2r(e^{2t} + e^{-2t}))(1 + 2t\xi'(r)v_{1}^{2})e^{t \cdot \xi(r)}.
\end{align*}
If $t = 0$, then this determinant is further equal to $-(2 \delta^{2} + 4r)$, which is always negative.
In the following, we assume that $v_{2} = \dots = v_{m-\lambda} = 0$.
Then, $r = ||v||^{2} = v_{1}^{2}$, and
the determinant vanishes if and only if the following term vanishes
\begin{displaymath}
(\ast) \qquad r(e^{2t} - e^{-2t}) \xi(r) - (\delta^{2}e^{-2t} + r(e^{2t} + e^{-2t}))(1 + 2t\xi'(r)r).
\end{displaymath}
If $r < \varepsilon_{0}^{2}$, then $\xi(r) = 1$ and $\xi'(r) = 0$, and $(\ast)$ reduces to
\begin{displaymath}
r(e^{2t} - e^{-2t}) - (\delta^{2}e^{-2t} + r(e^{2t} + e^{-2t})) = -(2r + \delta^{2})e^{-2t} < 0.
\end{displaymath}
In the following, we will assume that $\varepsilon_{0}^{2} \leq r$ ($< \varepsilon^{2}$).
Choose $\delta > 0$ so small that
\begin{displaymath}
\operatorname{log}(1 + \delta^{2}/\varepsilon_{0}^{2}) < \operatorname{min} \{1/(2 \varepsilon^{2} \operatorname{max}_{s \in \mathbb{R}}|\xi'(s)|), \operatorname{sinh}^{-1}(1/4)\}.
\end{displaymath}
for all $(v, t) = (v_{1}, 0, \dots, 0, t) \in X^{\varepsilon}_{\delta}$ with $\varepsilon_{0}^{2} \leq r = v_{1}^{2}$ that
\begin{displaymath}
0 \leq t < \operatorname{min} \{1/(4 \varepsilon^{2} \operatorname{max}_{s \in \mathbb{R}}|\xi'(s)|), 1/2 \cdot \operatorname{sinh}^{-1}(1/4)\}.
\end{displaymath}
Consequently, $1 + 2t\xi'(r)r > 1/2 > e^{2t} - e^{-2t} \geq 0$.
Therefore, we obtain the following estimate for the expression $(\ast)$:
\begin{displaymath}
(\ast) \leq r(e^{2t} - e^{-2t}) - r(1 + 2t\xi'(r)r) = r\left[(e^{2t} - e^{-2t}) - (1 + 2t\xi'(r)r)\right] < 0.
\end{displaymath}

$(ii)$
Let $0 < |s| \leq \delta^{2}$.
Any critical point of $\nu|_{H^{\varepsilon}_{\delta, s}}$ is necessarily a critical point of the map $(\mu|_{H^{\varepsilon}_{\delta}}, \nu)$, whose singular locus $S_{\delta}$ has been determined in part $(i)$.
If $s < 0$, then $H^{\varepsilon}_{\delta, s} \cap S_{\delta} = \emptyset$, and the claim about $\nu|_{L^{\varepsilon}_{\delta}}$ holds by construction.
If $s > 0$, then the points of $H^{\varepsilon}_{\delta, s} \cap S_{\delta} = \{(0, \pm\sqrt{s}, 0)\} =: \{x^{\pm}_{s}\}$ can be checked to be non-degenerate critical points of $\nu|_{H^{\varepsilon}_{\delta, s}}$ with the desired properties by a straightforward computation.
For this purpose, note that $x^{\pm}_{s}$ is contained in the open subset $R^{\varepsilon_{0}}_{\sqrt{s}} \subset H^{\varepsilon}_{\delta, s}$ (see \Cref{handle properties}).
On $R^{\varepsilon_{0}}_{\sqrt{s}}$, $\nu$ agrees with $\nu_{<}$, and can be studied near $x^{\pm}_{s}$ by means of 
$$\rho^{\varepsilon_{0}}_{\sqrt{s}} \colon \operatorname{int} D_{\varepsilon_{0}}^{\lambda} \times S^{m-\lambda-1} \stackrel{\cong}{\longrightarrow} R^{\varepsilon_{0}}_{\sqrt{s}}, \qquad \rho^{\varepsilon_{0}}_{\sqrt{s}}(u, v) = (u, \sqrt{||u||^{2} + s} \cdot v),$$
and the inverse of the stereographic projection,
\begin{align*}
\sigma_{\pm} \colon \mathbb{R}^{m-\lambda-1} &\stackrel{\cong}{\longrightarrow} S^{m-\lambda-1}\setminus \{(\mp 1, 0, \dots, 0)\}, \\
w = (w_{1}, \dots, w_{m-\lambda-1}) &\mapsto \left(\mp\frac{|w|^{2}-1}{|w|^{2}+1}, \frac{2w_{1}}{|w|^{2}+1}, \dots, \frac{2w_{m-\lambda-1}}{|w|^{2}+1}\right).
\end{align*}

$(iii)$
Let $\varepsilon' := 2 \varepsilon_{0}$ and $(x, t) = (p, q, t) \in Y^{\varepsilon}_{\delta} \setminus \overline{Y^{\varepsilon'}_{\delta}}$.
Then, it follows from $\eta_{x}(t) \in H^{\varepsilon}_{\delta} \setminus Z$ and $||q||^{2} > (2 \varepsilon_{0})^{2}$ that
\begin{displaymath}
\nu(\eta_{x}(t)) = (\nu_{>} \circ \eta)(x, t) = e^{t \cdot \xi(||q||^{2})} q_{1} = q_{1} = \nu(\eta_{x}(0)).
\end{displaymath}
\end{proof}

\subsubsection{Proof of \Cref{theorem handle extension}}\label{proof of handle extension theorem}

Without loss of generality we may assume that $k+1 \leq \lambda \leq \lceil n/2 \rceil$.
(In fact, it suffices to show that $W_{1}$ is $(k-1)$-connected.
By an argument similar to \cite[Remark 1, p. 70]{mil2} we can show that $W_{1}$ is simply connected.
Then the claim follows because
the effect of a $p$-surgery on a closed manifold of dimension $d \geq p+2$ does not affect the homology groups in dimensions strictly below $\operatorname{min}\{p, d-p-1\}$. In our case, $p = \lambda -1 \in \{k, \dots, n-k-1\}$.)

Let $\xi$ be a gradient-like vector fields of $\tau$ (see \cite[Definition 3.2, p. 20]{mil2}).
Let $c_{1}, \dots, c_{N}$ be the critical points of $\tau$.
For every $i = 1, \dots, N$ there exist open neighborhoods $U_{i}$ of $c_{i} \in W$ and $V_{i}$ of $0 \in \mathbb{R}^{n+1}$, and a chart $\psi_{i} \colon U_{i} \stackrel{\cong}{\longrightarrow} V_{i}$ of $W$ with $\psi_{i}(c_{i}) = 0$, such that the pair $(\tau, \xi)$ corresponds via $\psi_{i}$ to the pair $(\mu, \upsilon)$ introduced at the beginning of \Cref{constructing fold maps from local handles into the plane}), i.e.,
\begin{align*}
\tau \circ \psi_{i}^{-1} = \mu|_{V_{i}} + 1/2, \\
d\psi_{i} \circ \xi \circ \psi_{i}^{-1} = \upsilon|_{V_{i}}.
\end{align*}

We choose $\varepsilon, \delta > 0$ so small that the local $(2\varepsilon, \delta)$-handle $H^{2\varepsilon}_{\delta}$ of \Cref{k-handle definition} is contained in $V_{i}$ for all $i = 1, \dots, N$.
By choosing $\delta > 0$ small enough, we may in addition assume that there exists a smooth map $\nu \colon H_{\delta}^{\varepsilon} \rightarrow \mathbb{R}$ with the properties $(i)$ to $(iii)$ of \Cref{forward k handle}.

We apply \Cref{corollary constrained standard Morse functions} to the $(k-1)$-connected closed $n$-manifold $M^{n} = \tau^{-1}(1/2 - \delta^{2})$, the left-hand spheres $L_{i} = S^{\lambda-1}$ of dimension $d = \lambda-1$, some fixed real numbers $0 < z_{1} < \dots < z_{N} < 1$, and the embeddings
\begin{displaymath}
\phi_{i} \colon S^{\lambda-1} \times \operatorname{int} D_{2 \varepsilon}^{n-\lambda+1} \stackrel{\lambda^{2 \varepsilon}_{\delta}}{\longrightarrow} L^{2 \varepsilon}_{\delta} \stackrel{\psi_{i}^{-1}}{\longrightarrow} U_{i} \cap M^{n} \hookrightarrow M^{n}, \qquad i \in \{1, \dots, N\},
\end{displaymath}
to obtain a $k$-constrained Morse function $f \colon M^{n} \rightarrow \mathbb{R}$ such that, for some $C > 0$,
\begin{displaymath}
f(\phi_{i}(u, v)) = C \cdot v_{1} + z_{i}, \qquad (u, v) \in S^{\lambda-1} \times \operatorname{int} D_{\varepsilon}^{n-\lambda+1}, \qquad i = 1, \dots, N.
\end{displaymath}

Note that it suffices to construct the desired map $\sigma$ in a neighborhood of $\tau^{-1}(1/2)$ in $W$.
In fact, a smooth function $\sigma \colon \tau^{-1}([1/2 - \delta^{2}, 1/2 + \delta^{2}]) \rightarrow \mathbb{R}$ with the desired properties is given by
\begin{displaymath}
\sigma(w) = \begin{cases}
C \cdot \nu(\psi_{i}(w)) + z_{i}, \qquad \text{if} \; w \in \psi_{i}^{-1}(H_{\delta}^{\varepsilon}) \text{ for some } i = 1, \dots, N, \\
f(M \cap \Gamma_{w}), \qquad \text{else},
\end{cases}
\end{displaymath}
where $\Gamma_{w} \subset W$ denotes the image of the uniquely determined maximally extended integral curve through $w$ with respect to $\xi$.
To show that $\sigma$ is a well-defined smooth function, use the behavior of $f$ on $\psi_{i}^{-1}(L_{\delta}^{\varepsilon})$ to that of $\nu$ on $L_{\delta}^{\varepsilon}$, and use property $(iii)$ of $\nu$ in \Cref{forward k handle}.
The desired properties of $\sigma$ follow from the properties $(i)$ and $(ii)$ of $\nu$ in \Cref{forward k handle}.

This completes the proof of \Cref{theorem handle extension}.

%%%%%%%%%%%%%%%%%%%%%%%%%%%%%%%%%%%%%%%%%%%%%%%%%%%%%%%%%%%%%%%%%%%%%%%%%%%%%%%%%%%%%%
\subsection{Stein Factorization}\label{stein factorization}
The importance of Stein factorization for the global study of singularities of smooth maps was first realized when Burlet and de Rham \cite{burder} used it as a tool to study special generic maps of $3$-manifolds into the plane.

We recall the concept of Stein factorization of an arbitrary continuous map $f \colon X \rightarrow Y$ between topological spaces.
Define an equivalence relation $\sim_{f}$ on $X$ as follows.
Two points $x_{1}, x_{2} \in X$ are called equivalent, $x_{1} \sim_{f} x_{2}$, if they are mapped by $f$ to the same point $y := f(x_{1}) = f(x_{2}) \in Y$, and lie in the same connected component of $f^{-1}(y)$.
The quotient map $\pi_{f} \colon X \rightarrow X/\sim_{f}$ gives rise to a unique set-theoretic factorization of $f$ of the form

\begin{center}
\begin{tikzpicture}
  \matrix (m) [matrix of math nodes,row sep=3em,column sep=4em,minimum width=2em]
  {
     X & Y \\
     X/\sim_{f} &  \\};
  \path[-stealth]
    (m-1-1) edge node [left] {$\pi_{f}$} (m-2-1)
            edge node [above] {$f$} (m-1-2)
    (m-2-1) edge node [below] {$\overline{f}$} (m-1-2);
\end{tikzpicture}.
\end{center}

If we equip the quotient space $X_{f} := X/\sim_{f}$ with the quotient topology induced by the surjective map $\pi_{f} \colon X \rightarrow X_{f}$,
then it follows that the maps $\pi_{f}$ and $\overline{f}$ are continuous.

In the following, both the above diagram and the quotient space $X_{f} = X/\sim_{f}$ will be referred to as the \emph{Stein factorization} of $f$.

In generalization of \cite[Lemma 2.1, p. 290]{sae2}, the following result clarifies the structure of Stein factorization for $k$-constrained generic bordisms $W^{m} \rightarrow \mathbb{R}^{2}$ for $m \geq 3$ and $k > 1$.

\begin{theorem}\label{stein factorization for fold maps with indefinite fold lines}
Let $(W^{m}, M_{1}, M_{2})$ be a smooth manifold triad of dimension $m := \operatorname{dim} W \geq 3$.
Suppose that $F \colon W^{m} \rightarrow \mathbb{R}^{2}$ is an oriented $k$-constrained generic bordism, and that $F$ is a stable map.

If $k > 1$, then the Stein factorization $W_{F} = W/\sim_{F}$ of $F$ can be given the structure of a compact smooth manifold of dimension $2$ with corners in such a way that $\pi_{F} \colon W \rightarrow W_{F}$ is a
generic smooth map and $\overline{F} \colon W_{F} \rightarrow \mathbb{R}^{2}$ is an immersion.
Furthermore, if $D(F)$ denotes the union of the definite fold lines of $F$, then the boundary of $W_{F}$ decomposes as
\begin{displaymath}
\partial W_{F} = \pi_{F}(\partial W) \cup \pi_{F}(D(F)),
\end{displaymath}
where $\pi_{F}(\partial W) \cap \pi_{F}(D(F)) = \pi_{F}(\partial W \cap D(F))$ is the set of corners of $W_{F}$, and $\pi_{F}$ restricts to an embedding $D(F) \rightarrow \partial W_{F}$.
\end{theorem}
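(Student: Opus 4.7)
The strategy is to build the smooth structure on $W_F$ chart by chart from local normal forms of $F$, and then verify compatibility. The hypothesis $k > 1$ enters in one decisive way: at every indefinite fold point of $F$ the local Morse index $\lambda$ satisfies $\min(\lambda, n-\lambda) \geq k \geq 2$, which forces each model quadric $\{-\|x'\|^2 + \|x''\|^2 = c\}$ to be connected for every $c \in \mathbb{R}$ (parametrize as $\mathbb{R}^{\lambda} \times S^{n-\lambda-1}$ when $c>0$ and as $\mathbb{R}^{n-\lambda} \times S^{\lambda-1}$ when $c<0$). Consequently, near every regular point and every indefinite fold point, the local fibers of $F$ are connected, and the equivalence relation $\sim_F$ collapses nothing locally there.

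First, I would classify the points of $W$ into six local types and describe $W_F$ for each. At (1a) interior regular and (1b) interior indefinite fold points, $\pi_F$ is a local homeomorphism, so $W_F$ is locally diffeomorphic to an open disc in $\mathbb{R}^2$ via $\overline{F}$. At (1c) interior definite fold points (absolute index $n$), the normal form $(t,x) \mapsto (t, \pm\|x\|^2)$ has nonempty fibers equal to connected spheres, which $\pi_F$ collapses to points, yielding a local half-plane whose boundary edge lies in $\pi_F(D(F))$ and onto which $\pi_F|_{D(F)}$ is a bijection. At (1d) regular boundary and (1e) indefinite boundary fold points, the collar form $F = f \times \operatorname{id}$ combined with the connectedness fact again yields a local half-plane, but now with boundary piece in $\pi_F(\partial W)$. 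At (1f) boundary definite fold points (critical points of $f$ of index $0$ or $n$), $W_F$ is locally a quadrant whose two edges lie in $\pi_F(\partial W)$ and $\pi_F(D(F))$ respectively and meet at a corner.

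Finally, I would assemble these local charts into a global $C^\infty$ atlas with corners. On overlaps away from self-crossings of the fold image, transition maps factor through the local open embedding $\overline{F}$, so smoothness is automatic; at a transverse crossing of the fold image of $F$, two fold branches contribute two distinct charts of $W_F$, and the connectedness fact above ensures they are never identified by $\sim_F$, so $\overline{F}$ remains a local immersion near the crossing. With the atlas in place, $\pi_F$ is a generic smooth map (fold-only, locally the explicit fiber-collapse projection) and $\overline{F}$ is an immersion. Reading off boundaries from the types (1c)--(1f) gives $\partial W_F = \pi_F(\partial W) \cup \pi_F(D(F))$ with corner set $\pi_F(\partial W \cap D(F))$, and $\pi_F|_{D(F)}$ is an embedding because at each definite fold point $p$ the fiber $F^{-1}(F(p))$ has $\{p\}$ as an isolated connected component (the entire collapsed sphere degenerates to $p$ at the critical value), so distinct definite fold points are never identified even when their images in $\mathbb{R}^2$ coincide at a crossing. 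The main obstacle will be the chart-compatibility verification at boundary corners and at fold-image crossings; in each case the argument reduces via stability of $F$ to an explicit normal-form computation. A subsidiary technical point is that the conclusion requires $F$ to be essentially cusp-free in the region where $\overline{F}$ is claimed to be immersive, since a cusp of $F$ would produce a non-immersive point of $\overline{F}$ even after the Stein collapse.
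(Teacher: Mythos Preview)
Your chart-by-chart construction is essentially the local analysis carried out by Kobayashi--Saeki in \cite{kobsae} (which the paper simply cites), so the overall strategy is sound and matches the paper's in spirit. However, there is a genuine gap: you omit cusps from your list of local types, and your closing remark about them is incorrect.

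A $k$-constrained \emph{generic} bordism (\Cref{definition constrained generic bordism}) may have cusps; it is only the $k$-constrained \emph{bordism} of \Cref{definition constrained bordism} that is cusp-free. Your six-type classification is therefore incomplete. More importantly, your claim that ``a cusp of $F$ would produce a non-immersive point of $\overline{F}$'' is false under the hypothesis $k>1$. At a cusp with normal form
\[
(t,x)\;\longmapsto\;\bigl(t,\; t x_1 + x_1^3 - x_2^2 - \cdots - x_{j+1}^2 + x_{j+2}^2 + \cdots + x_n^2\bigr),
\]
the index constraint forces $j\geq 2$ and $n-1-j\geq 2$. For any nearby value $(t_0,s_0)$ and any fixed $x_1$, the slice of the fiber is a quadric $\{\|v\|^2-\|u\|^2=c\}$ with $\dim u=j\geq 2$ and $\dim v=n-1-j\geq 2$, hence connected by exactly the argument you used for indefinite folds; varying $x_1$ over $\mathbb{R}$ keeps the full fiber connected. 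Thus $\pi_F$ is locally a bijection onto an open disc, $\overline{F}$ is a local diffeomorphism there, and the cusp of $F$ becomes a cusp of $\pi_F$, not of $\overline{F}$. In the Kobayashi--Saeki classification this is the type $(c2)$ that the paper explicitly lists. Consequently your assertion that $\pi_F$ is ``fold-only'' is also wrong: $\pi_F$ inherits the cusps of $F$.

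Once you add this seventh local type and drop the final caveat, your argument goes through. The paper's proof is shorter only because it invokes the Kobayashi--Saeki local classification directly and observes that $k>1$ excludes precisely the absolute fold index $m-2$ (and hence the corresponding ``bad'' local types) for which $W_F$ would fail to be a $2$-manifold.
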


\begin{proof}
The claims follow from \cite[Theorem 2.2, p. 2609]{kobsae}.
Note that the only local neighborhoods of points in $W_{F}$ that can occur are those of types $(a)$, $(b1)$, $(b3)$, $(c2)$ and $(d2)$ in \cite[Figure 1, p. 2610]{kobsae} because we have excluded fold points of absolute index $m-2$.
This implies that $W_{F}$ is a topological $2$-manifold.
Finally, the desired smooth structure on $W_{F}$ is induced by requiring $\overline{F}$ to be a local diffeomorphism.
\end{proof}

%%%%%%%%%%%%%%%%%%%%%%%%%%%%%%%%%%%%%%%%%%%%%%%%%%%%%%%%%%%%%%%%%%%%%%%%%%%%%%%%%%%%%%
\section{Proof of \Cref{theorem constrained generic bordism and connective bordism}}\label{proof of main theorem}

Fix integers $n \geq 3$ and $1 \leq k < n$.
\par\medskip

\Cref{lemma epsilon homomorphism} below is a straightforward generalization of \cite[Lemma 3.2, p. 292]{sae2}, and will be used frequently.
Note that \Cref{lemma epsilon homomorphism} can be considered as a version of \Cref{theorem handle extension} for the case that all critical points of $\tau$ have the same Morse index $\lambda \in \{1, m-1\}$.

\begin{lemma}\label{lemma epsilon homomorphism}
Let $f \colon M^{n} \rightarrow \mathbb{R}$ be a $k$-constrained Morse function.
Then, there exist a $k$-constrained Morse function $g \colon \sharp(M^{n}) \rightarrow \mathbb{R}$ and an oriented $k$-constrained bordism $G \colon W^{n+1} \rightarrow \mathbb{R}^{2}$ from $g$ to $f$.
\end{lemma}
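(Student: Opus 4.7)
The plan is to generalize the argument of \cite[Lemma 3.2]{sae2} to the $k$-constrained setting by induction on the number $c$ of connected components of $M^n$.

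\emph{Base case} ($c = 1$): Take $W^{n+1} = M^n \times [0, 1]$ with $G = f \times \mathrm{id}_{[0, 1]}$ and $g = f$. Since the absolute index of each fold line of $G$ equals $\max\{i, n-i\}$ for $i$ ranging over the Morse indices of critical points of $f$, the $k$-constraint on $f$ forces these indices to lie in $\{\lceil n/2\rceil, \ldots, n-k\} \cup \{n\}$, so $G$ is a $k$-constrained bordism from $f$ to itself.

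\emph{Inductive step} ($c \geq 2$): It suffices to construct a $k$-constrained bordism from $f$ to a $k$-constrained Morse function on a manifold with one fewer component (obtained by one connected sum), and then apply the inductive hypothesis and concatenate bordisms. Write $M^n = M_1 \sqcup M_2 \sqcup M_{\mathrm{rest}}$. By subtracting suitable constants from $f|_{M_1}$ and $f|_{M_2}$ (an adjustment realized by a trivial $k$-constrained bordism), one may assume $f$ attains a common global minimum value $a$ at points $p_i \in M_i$ for $i = 1, 2$, with local normal form $x \mapsto a + \|x\|^2$. Start with the cylinder $M^n \times [0, 1]$ equipped with $f \times \mathrm{id}$: near each definite fold arc $\{p_i\} \times [0, 1]$ this map has the local form $(x, t) \mapsto (a + \|x\|^2, t)$. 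Modify the cylinder in a neighborhood of the pair of arcs $\{p_1, p_2\} \times [0, 1]$ by gluing in a \emph{pair-of-pants gadget} $P^{n+1}$, whose fold map to $\mathbb{R}^2$ has definite fold locus a ``Y''-shaped $1$-manifold that merges two incoming definite fold arcs into a single outgoing one. Concretely, $P^{n+1}$ is modelled as a (trivially framed, orientable) $D^n$-bundle over a planar pair-of-pants $\Pi \subset \mathbb{R}^2$, with the fold map being the composition of bundle projection $P^{n+1} \to \Pi$ with the inclusion $\Pi \hookrightarrow \mathbb{R}^2$.

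The resulting bordism $W^{n+1}$ has boundary $M^n \sqcup -M'$ with $M' = (M_1 \sharp M_2) \sqcup M_{\mathrm{rest}}$, and the new Morse function $g := G|_{M' \times \{1\}}$ has critical points identical to those of $f$ except that the minima $p_1, p_2$ are replaced by a single minimum in the merged component; in particular, $g$ is $k$-constrained. By construction $G$ has no cusps (the modification only affects definite folds), and its indefinite fold indices coincide with those of $f \times \mathrm{id}$, hence lie in $\{\lceil n/2 \rceil, \ldots, n-k\}$. The main technical point — and the only nontrivial obstacle — is assembling the pants gadget $P^{n+1}$ and gluing it into the cylinder so that both the smooth and the fold-map structures match along the boundary collar (including orientation compatibility); this amounts to a verbatim extension of the construction in \cite[Lemma 3.2]{sae2} (which treats the special generic case $k > n/2$) to general $k$, and works in our setting because the modification is entirely supported in the definite fold stratum and therefore leaves the indefinite fold structure — and hence the $k$-constraint — untouched.
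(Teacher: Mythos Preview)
Your overall approach matches the paper's: both defer to \cite[Lemma 3.2]{sae2}, and your key observation --- that the modification is supported entirely near the definite fold locus, so the indefinite absolute indices (and hence the $k$-constraint) are untouched --- is precisely the content of the generalization. The paper likewise gives no details, remarking only that the lemma is a straightforward generalization of Saeki's result and can be regarded as the $\lambda \in \{1,n\}$ case of the handle extension theorem.

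That said, your explicit description of the gadget $P^{n+1}$ is not correct. The composition of a $D^n$-bundle projection $P^{n+1}\to\Pi$ with an inclusion $\Pi\hookrightarrow\mathbb{R}^2$ is a \emph{submersion} and has no fold points whatsoever, so it cannot carry the definite fold locus you claim. Moreover, the singular locus of a fold map is always a smooth $1$-manifold and therefore can never be ``Y-shaped'' (a trivalent vertex is impossible). In the actual construction --- the $\lambda=1$ analogue of the forward handle, taking $\sigma$ to be the descending coordinate $x$ rather than $y_1$ --- the local handle contributes a \emph{single} definite fold arc joining a minimum of one component to a maximum of the other; counting endpoints shows that ``two minima in, one minimum out'' is incompatible with a fold locus that is a $1$-manifold without introducing extra (indefinite) components. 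The Stein factorization of the local piece is a hexagonal disc whose six sides alternate between images of $\partial W$ and images of definite fold arcs. Since you ultimately defer to Saeki for the details, these inaccuracies in your heuristic picture do not break the argument, but the bundle-projection model and the ``merge two minima / Y-shaped locus'' description should be replaced.
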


\Cref{lemma delta well-defined} below generalizes \cite[Lemma 3.3, p. 293]{sae2} by showing that the assignment $[f \colon M^{n} \rightarrow \mathbb{R}] \mapsto [\sharp(M^{n})]$ defines a well-defined map $\delta_{k}^{n} \colon \mathcal{G}_{k}^{n} \rightarrow \mathcal{C}_{k-1}^{n}$ for any integer $1 < k < n$.
Then, it follows that $\delta_{k}^{n} \colon \mathcal{G}_{k}^{n} \rightarrow \mathcal{C}_{k-1}^{n}$ is a homomorphism because the existence of an orientation preserving diffeomorphism $\sharp(M \sqcup N) \cong \sharp(\sharp(M) \sqcup \sharp(N))$ implies that $\delta_{k}^{n}([f] + [g]) = [\sharp(M \sqcup N)] = [\sharp(\sharp(M) \sqcup \sharp(N))] = \delta_{k}^{n}([f]) + \delta_{k}^{n}([g])$ for any $k$-constrained Morse functions $f \colon M^{n} \rightarrow \mathbb{R}$ and $g \colon N^{n} \rightarrow \mathbb{R}$.

\begin{proposition}\label{lemma delta well-defined}
Suppose that $1 < k < n$.
Let $f \colon M^{n} \rightarrow \mathbb{R}$ and $g \colon N^{n} \rightarrow \mathbb{R}$ be $k$-constrained Morse functions which are oriented $k$-constrained generic bordant.
Then, there exists an oriented $(k-1)$-connective bordism $V^{n+1}$ from $M^{n}$ to $N^{n}$.
\end{proposition}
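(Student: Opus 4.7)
The plan is to build the required oriented $(k-1)$-connective bordism $V$ by reshaping the given bordism $W$, guided by the Stein factorization of $G$. Since $k > 1$, \Cref{stein factorization for fold maps with indefinite fold lines} gives the Stein factorization $\pi_G \colon W \to W_G$ as a compact smooth $2$-manifold with corners, with $\partial W_G = \pi_G(\partial W) \cup \pi_G(D(G))$, where $D(G)$ denotes the definite fold locus of $G$. I will use this structure to separate the definite-fold contributions from the indefinite-fold contributions to $W$.

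Next, I would excise the caps coming from definite fold components and replace them with trivial fillers. Each component $C \subset D(G)$ is a smooth $1$-manifold in $W$, and the local normal form $(t, x) \mapsto (t, ||x||^{2})$ shows that a tubular neighborhood of $\pi_G(C)$ in $W_G$ lifts under $\pi_G$ to a $D^n$-bundle over $C$ inside $W$. Following Saeki's strategy in \cite[Lemma 3.3, p. 293]{sae2}, I would excise each such cap and glue in a trivial bordism piece preserving boundary and orientation, producing a smooth oriented manifold $V$ with $\partial V = M^n \sqcup -N^n$.

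The third step is to verify that $V$ is $(k-1)$-connected. After the replacement step, only indefinite folds remain, with actual fold indices $i \in \{k, \dots, n-k\}$. Taking a generic perturbation $\tau \colon V \to [0, 1]$ of $\pi_{2} \circ G$ to be Morse, the critical points of $\tau$ can arise only at vertical tangencies of the remaining indefinite fold lines; a local computation shows that such a tangency at a fold of index $i$ contributes a Morse critical point of index $i$ or $i+1$. Hence every critical point of $\tau$ has Morse index in $\{k, \dots, n-k+1\}$, so the associated handle decomposition of $V$ from $M$ uses only handles of index $\geq k$. Combined with the $(k-1)$-connectedness of $M^n$ (which is automatic from the existence of a $k$-constrained Morse function on it), this forces $V$ itself to be $(k-1)$-connected.

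The hard part will be the excision-and-replacement step. The replacement of each $D^n$-bundle cap must be compatible with the smooth structure, orientation, and boundary; in addition, the cusps of $G$ (which, for $k > 1$, only bound pairs of indefinite folds) must be handled in the Morse-theoretic third step. As a useful sanity check, when $k > n/2$ the fold index constraint excludes all indefinite folds and forces the absence of cusps, so $G$ is already a special generic map and Saeki's \cite[Lemma 3.3]{sae2} applies verbatim to produce an orientation-preserving diffeomorphism $\sharp(M^n) \cong \sharp(N^n)$, from which the required $(k-1)$-connective bordism follows by concatenation with the standard $1$-handle bordisms from $M$ to $\sharp(M)$ and back to $N$.
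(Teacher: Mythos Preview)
Your excision-and-replacement step is a genuine gap, and you yourself flag it as ``the hard part'' without resolving it. Definite fold components of $G$ come in two flavours, and neither admits the surgery you describe. Arc components necessarily terminate at the definite critical points of $f$ and $g$ on $\partial W$, so their $D^n$-bundle neighbourhoods meet $M$ and $N$; excising them alters the boundary, and no ``trivial filler'' is specified that restores $\partial V = M \sqcup -N$ while carrying an extension of $\tau$ free of low-index critical points. Circle components have neighbourhoods that are $D^n$-bundles over $S^1$; the obvious replacement $S^{n-1} \times D^2$ contributes a $0$-handle to any extended $\tau$, and again you give no construction. Saeki's argument in \cite[Lemma~3.3]{sae2} does not proceed by local cap-excision in this sense, so invoking it does not close the gap. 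A further issue: if $M$ or $N$ is disconnected (which the hypotheses allow), your $V$ will be disconnected too, hence never $(k-1)$-connected for $k \geq 2$.

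The paper's proof takes a different route that sidesteps all of this. It first uses \Cref{lemma epsilon homomorphism} to pass to a null-bordism $H \colon U \to \mathbb{R}^2$ of the \emph{connected} manifold $\sharp(M \sqcup -N)$; the Stein factorization $U_H$ is then a connected surface with exactly two corners, hence a half-disc with finitely many $1$-handles attached. Rather than excising anything, the paper \emph{cuts} $U$ along the preimage of a single arc $L \subset U_H$ that separates a rectangular piece from the handles. The resulting $V \subset U$ has Stein factorization $V_F \cong [0,1]^2$, and the projection $\rho \circ \pi_F$ to one factor is (after a small perturbation) a $k$-constrained Morse function on $V$, giving $(k-1)$-connectivity immediately. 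The new boundary component $P = \pi_H^{-1}(L) \cong \sharp(P_1 \sqcup (-P_1) \sqcup \dots \sqcup P_r \sqcup (-P_r))$ is visibly null in $\mathcal{C}_{k-1}^n$. The missing idea in your approach is this ``cut the Stein surface, don't excise the caps'' manoeuvre, together with the preliminary reduction to a connected domain.
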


\begin{figure}[htbp]
  \centering
  
   \fbox{\begin{tikzpicture}[scale=0.2]
    \draw (0, 0) node[inner sep=0] {\includegraphics[width=0.65\textwidth]{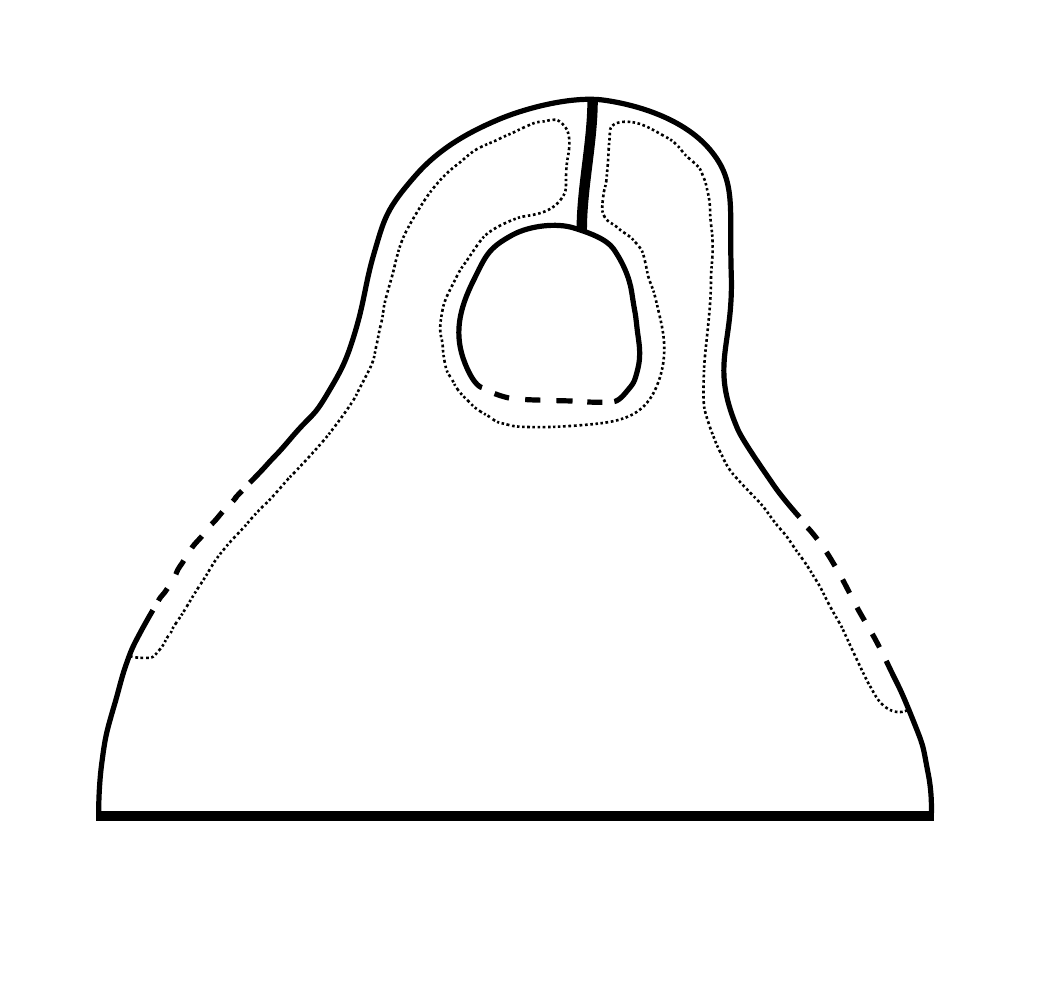}};
    \draw (-1, -15) node {$\pi_{H}(\sharp(M^{n} \sqcup - N^{n}))$};
    \draw (2.5, 17.5) node {$L_{i}$};
    \draw (-2, 2) node {$L$};
    \draw (0, -7) node {$U_{H}$};
\end{tikzpicture}}
 
  \caption{Stein factorization $\pi_{H} \colon U \rightarrow U_{H}$ of $H \colon U^{n+1} \rightarrow \mathbb{R}^{2}$.
  Exemplarily, only some $1$-handle $T_{i}$ is shown explicitly.
  The line segment $[0, 1] \cong L \subset U_{H}$ is chosen as indicated by the dotted line.}
  \label{fig:stein_factorization}
\end{figure}

\begin{proof}
By assumption there exists an oriented $k$-constrained generic bordism from $f \sqcup -g \colon M^{n} \sqcup -N^{n} \rightarrow \mathbb{R}$ to $\emptyset \rightarrow \mathbb{R}$.
\Cref{lemma epsilon homomorphism} implies that there exist a $k$-constrained Morse function $h \colon \sharp(M^{n} \sqcup - N^{n}) \rightarrow \mathbb{R}$ and an oriented $k$-constrained generic bordism $H \colon U^{n+1} \rightarrow \mathbb{R}^{2}$ from $h$ to $\emptyset \rightarrow \mathbb{R}$.
By \Cref{stein factorization for fold maps with indefinite fold lines} the Stein factorization $U_{H}$ of $H$ (see \Cref{fig:stein_factorization}) is a connected oriented compact $2$-manifold with two corners, where we have assumed without loss of generality that $H$ is stable.
(In fact, $H$ can always be achieved to be stable by first choosing $h$ to have a single critical point on each level set, and then perturbing $H$ on the interior of $U^{n+1}$.)
Then, by the classification of compact surfaces, we may think of $U_{H}$ as a half disc with a finite number $r \geq 0$ of $1$-handles attached, say $T_{1}, \dots, T_{r}$.
Let $L_{1}, \dots, L_{r} \subset U^{n+1}$ be the line segments indicated in \Cref{fig:stein_factorization}.
We choose an oriented compact submanifold $V^{n+1}$ of $U^{n+1}$ with boundary $\partial V = \sharp(M^{n} \sqcup - N^{n}) \sqcup -P^{n}$, where $P^{n} \subset U^{n+1}$ is an oriented submanifold of codimension $1$ which is constructed as the preimage $P^{n} = \pi_{H}^{-1}(L)$ of a smooth line segment $L \subset U_{H}$ chosen as indicated in \Cref{fig:stein_factorization}.
Here, we have assumed that $L$ avoids images of cusps of $\pi_{H}$, and is transverse to $\pi_{H}(S(\pi_{H}))$.
Then, we have an orientation preserving diffeomorphism $P^{n} \cong \sharp(P_{1} \sqcup (-P_{1}) \sqcup \dots \sqcup P_{r} \sqcup (-P_{r}))$, where $P_{i} := \pi_{H}^{-1}(L_{i})$.
By construction, the Stein factorization of the restriction $F := H|_{V}$ is diffeomorphic to a rectangle, $V_{F} \cong [0, 1] \times [0, 1]$, in such a way that $\{0, 1\} \times [0, 1]$ corresponds to $\pi_{F}(\partial V)$, and $[0, 1] \times \{0, 1\}$ corresponds to $\pi_{F}(S)$, where $S$ denotes the definite fold locus of $F$.
Let $\rho \colon V_{F} \rightarrow [0, 1]$ denote the projection of $V_{F} \cong [0, 1] \times [0, 1]$ to the first factor.
Then, by slightly perturbing $\rho$ on the interior of $V_{F}$, we can achieve that the composition $\rho \circ \pi_{F} \colon V^{n+1} \rightarrow [0, 1]$ is a $k$-constrained Morse function with regular level sets $\sharp(M^{n} \sqcup N^{n})$ and $P^{n}$.
Consequently, $V^{n+1}$ is a $(k-1)$-connective bordism from $\sharp(M^{n} \sqcup N^{n})$ to $P^{n}$, and the claim follows.
\end{proof}

From now on, suppose that $n \geq 6$. \par\medskip

Let us show that the choice of $k$-constrained Morse functions $f \colon M^{n} \rightarrow \mathbb{R}$ on given $k$-connected oriented closed manifolds $M^{n}$ induces a well-defined map $\varepsilon_{k}^{n} \colon \mathcal{C}_{k}^{n} \rightarrow \mathcal{G}_{k}^{n}$, $[M^{n}] \mapsto [f \colon M^{n} \rightarrow \mathbb{R}]$.
Note that $\varepsilon_{k}^{n}$ will be a homomorphism by \Cref{lemma epsilon homomorphism}.
We distinguish between the cases that $k > n/2$, $k = n/2$ and $k < n/2$.

For $k > n/2$, note that $\mathcal{C}_{k}^{n} = \Theta_{n}$, the group of homotopy spheres, and $\mathcal{G}_{k}^{n} = \widetilde{\Gamma}(n, 1)$, the bordism group of special generic functions (see \Cref{G and M coincide for large k}).
Then, $\varepsilon_{k}^{n}$ coincides with the homomorphism
$$\widetilde{\Phi} \colon \Theta_{n} \rightarrow \widetilde{\Gamma}(n, 1)$$
introduced in the proof of \cite[Theorem 1.1, p. 288]{sae2} (compare \cite[p. 294]{sae2}), and is well-defined by \cite[Lemma 3.1, p. 291]{sae2} because $n \geq 6$.

For $k = n/2$ we have already noted in \Cref{connective bordism} that $\mathcal{C}_{n/2}^{n} = \Theta_{n}$, the group of homotopy spheres.
All indefinite critical points of an $n/2$-constrained Morse function $f \colon M^{n} \rightarrow \mathbb{R}$ are of index $n/2$, so there are none if $M^{n}$ is a homotopy sphere.
Hence, the desired homomorphism $\varepsilon_{n/2}^{n} \colon \mathcal{C}_{n/2}^{n} \rightarrow \mathcal{G}_{n/2}^{n}$ is the composition of the above homomorphism $\mathcal{C}_{n/2}^{n} = \Theta_{n} \stackrel{\widetilde{\Phi}}{\longrightarrow} \widetilde{\Gamma}(n, 1) = \mathcal{G}_{n/2+1}^{n}$ and the homomorphism $\mathcal{G}_{n/2+1}^{n} \rightarrow \mathcal{G}_{n/2}^{n}$.

Let $k < n/2$.
For $k = 1$, the map $\varepsilon_{1}^{n}$ is just the composition of the map $\mathcal{C}_{1}^{n} \rightarrow \Omega_{n}^{SO}$, $[M^{n}] \rightarrow [M^{n}]$, with the inverse of the isomorphism $\mathcal{G}_{1}^{n} \stackrel{\cong}{\longrightarrow} \Omega_{n}^{SO}$ described in \Cref{remark interpolation of generic constrained bordism}.
For $1 < k < n/2$, evaluation of the map $\varepsilon_{k}^{n} \colon \mathcal{C}_{k}^{n} \rightarrow \mathcal{G}_{k}^{n}$ on a given element of $\mathcal{C}_{k}^{n}$ depends a priori on the choice of a representative $M^{n}$, and on the choice of a $k$-constrained Morse function $f \colon M^{n} \rightarrow \mathbb{R}$.
Independence of choices follows from
the following result (compare \cite[Lemma 3.1, p. 291]{sae2}).

\begin{proposition}\label{existence of constrained bordisms}
Suppose that $1 < k < n/2$.
Let $W^{n+1}$ be an oriented $(k-1)$-connective bordism from $M_{0}^{n}$ to $M_{1}^{n}$, and let $g_{0} \colon M_{0}^{n} \rightarrow \mathbb{R}$ and $g_{1} \colon M_{1}^{n} \rightarrow \mathbb{R}$ be $k$-constrained Morse functions.
If $W^{n+1}$ is $k$-connected, then there exists an oriented $k$-constrained generic bordism $G \colon W^{n+1} \rightarrow \mathbb{R}^{2}$ from $g_{0}$ to $g_{1}$.
\end{proposition}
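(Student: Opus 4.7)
\medskip

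\noindent\textbf{Plan of proof.} The idea is to build $G$ piecewise from elementary handle-cobordism slices, apply \Cref{theorem handle extension} on each slice to produce an oriented $k$-constrained bordism, and then splice the slices together (and to the prescribed boundary data $g_0,g_1$) by inserting cylindrical oriented $k$-constrained generic bordisms supplied by \Cref{theorem cylinder bordism between constrained morse functions}.

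First I would produce a Morse function $\tau\colon W^{n+1}\to[0,m+1]$ with $M_0=\tau^{-1}(0)$ and $M_1=\tau^{-1}(m+1)$ as regular level sets, whose critical points all have index in $\{k+1,\dots,n-k\}$ and are arranged so that all critical points at height $j+\tfrac12$ share a common index $\lambda_j\in\{k+1,\dots,n-k\}$. Because $W$ is $k$-connected and the two boundary components $M_0,M_1$ are $(k-1)$-connected (since each admits a $k$-constrained Morse function), the long exact sequences of the pairs $(W,M_0)$ and $(W,M_1)$ show that both pairs are $k$-connected. The hypothesis $1<k<n/2$ gives $k\geq 2$ and $\dim W=n+1\ge 2k+2$, so the relative handle-cancellation techniques of \cite{mil2} permit trading away all handles of index $\le k$ relative to $M_0$, and dually all handles of index $\ge n-k+1$, as in the proof of \Cref{corollary constrained standard Morse functions}. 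Since attaching a handle of index $\ge k+1$ to a $(k-1)$-connected closed $n$-manifold preserves $(k-1)$-connectedness (surgery of index $\lambda-1\ge k$ leaves $\pi_i$ untouched for $i\le k-1$), an inductive argument shows that every intermediate level set $N_j:=\tau^{-1}(j)$, $j=0,\dots,m+1$, is $(k-1)$-connected, which is exactly the hypothesis needed to invoke \Cref{theorem handle extension}.

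Next, for $j=0,\dots,m$, set $W_j:=\tau^{-1}([j,j+1])$; this is an elementary cobordism whose critical points all have index $\lambda_j$ and lie on the middle slice $\tau^{-1}(j+\tfrac12)$. Rescaling $\tau|_{W_j}$ to $[0,1]$ and applying \Cref{theorem handle extension} produces $k$-constrained Morse functions $\sigma^{(j)}_0\colon N_j\to\mathbb{R}$ and $\sigma^{(j)}_1\colon N_{j+1}\to\mathbb{R}$ together with an oriented $k$-constrained bordism $(\sigma^{(j)},\tau|_{W_j})\colon W_j\to\mathbb{R}\times[j,j+1]$ from $\sigma^{(j)}_0$ to $\sigma^{(j)}_1$. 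In general the functions $\sigma^{(j)}_1$ and $\sigma^{(j+1)}_0$ on $N_{j+1}$ will not agree, and likewise $\sigma^{(0)}_0$ and $\sigma^{(m)}_1$ will not agree with the prescribed $g_0$ and $g_1$ on $M_0$ and $M_1$. Here \Cref{theorem cylinder bordism between constrained morse functions} is applied to furnish oriented $k$-constrained generic bordisms $F^{(-1)}\colon M_0\times[0,1]\to\mathbb{R}^2$ from $g_0$ to $\sigma^{(0)}_0$, intermediate cylinders $F^{(j)}\colon N_{j+1}\times[0,1]\to\mathbb{R}^2$ from $\sigma^{(j)}_1$ to $\sigma^{(j+1)}_0$ for $j=0,\dots,m-1$, and a closing cylinder $F^{(m)}\colon M_1\times[0,1]\to\mathbb{R}^2$ from $\sigma^{(m)}_1$ to $g_1$.

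Finally I would concatenate the pieces $F^{(-1)},(\sigma^{(0)},\tau|_{W_0}),F^{(0)},(\sigma^{(1)},\tau|_{W_1}),\ldots,F^{(m)}$ along the collar neighborhoods in which each piece is of the product form required by \Cref{definition constrained generic bordism}, reparametrizing the second $\mathbb{R}^2$-coordinate so that the resulting composite has total time parameter in $[0,1]$. Since inserting product collars does not alter the diffeomorphism type, the ambient manifold is canonically identified with $W^{n+1}$, and the glued map is the desired oriented $k$-constrained generic bordism $G\colon W^{n+1}\to\mathbb{R}^2$ from $g_0$ to $g_1$. The main technical obstacle I expect is the opening step: executing the relative handle-cancellation scheme simultaneously from both ends of the bordism in such a way that the intermediate level sets remain $(k-1)$-connected at every stage, since it is this inductive control on connectivity---made possible precisely by the constraint $k<n/2$---that legitimates the repeated use of \Cref{theorem handle extension} along the tower of slices.
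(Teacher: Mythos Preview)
Your proposal is correct and follows essentially the same route as the paper: arrange a Morse function on $W^{n+1}$ with indices confined to $\{k+1,\dots,n-k\}$, apply \Cref{theorem handle extension} on each single-index slice, and bridge the slices (and the prescribed boundary data) with the cylindrical generic bordisms of \Cref{theorem cylinder bordism between constrained morse functions}. Your write-up is in fact slightly more explicit than the paper's in verifying the $(k-1)$-connectedness of the intermediate level sets needed to invoke \Cref{theorem handle extension}, which the paper leaves implicit.
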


\begin{proof}
By the rearrangement theorem \cite[p. 44]{mil2} we can modify an arbitrarily chosen Morse function $f \colon W^{n+1} \rightarrow [-1/2, n+1+1/2]$ with regular level sets $M_{0}^{n} = f^{-1}(-1/2)$ and $M_{1}^{n} = f^{-1}(n+1+1/2)$ to be self-indexing, i.e., the Morse index of every critical point $c$ of $f$ is $f(c)$.
Following the proof of Smale's $h$-cobordism theorem \cite[Theorem 9.1, p. 107]{mil2}, we then modify $f$ in such a way that all Morse indices of critical points of $f$ are contained in the set $\{k+1, \dots, n-k\}$.
By \Cref{theorem handle extension} there exist for every $\lambda \in \{k+1, \dots, n-k\}$ $k$-constrained Morse functions $g_{\lambda}^{-} \colon f^{-1}(\lambda - 1/4) \rightarrow \mathbb{R}$ and $g_{\lambda}^{+} \colon f^{-1}(\lambda + 1/4) \rightarrow \mathbb{R}$, and an oriented $k$-constrained generic bordism $G_{\lambda} \colon f^{-1}([\lambda - 1/4, \lambda + 1/4]) \rightarrow \mathbb{R}^{2}$ from $g_{\lambda}^{-}$ to $g_{\lambda}^{+}$.
Furthermore, by \Cref{theorem cylinder bordism between constrained morse functions} there exist oriented $k$-constrained generic bordisms
$G_{0, k+1} \colon f^{-1}([-1/2, (k + 1) - 1/4]) \rightarrow \mathbb{R}^{2}$ from $g_{0}$ to $g_{k+1}^{-}$,
$G_{\lambda, \lambda+1} \colon f^{-1}([\lambda +1/4, (\lambda + 1) - 1/4]) \rightarrow \mathbb{R}^{2}$ from $g_{\lambda}^{+}$ to $g_{\lambda+1}^{-}$, $\lambda \in \{k+1, \dots n-k-1\}$, and
$G_{n-k, n+1} \colon f^{-1}([(n-k) + 1/4, n+1+1/2]) \rightarrow \mathbb{R}^{2}$ from $g_{n-k}^{+}$ to $g_{1}$, and the claim follows.
\end{proof}

From now on, let $1 < k < n$.
The composition $\delta_{k}^{n} \circ \varepsilon_{k}^{n}$ coincides by definition with the natural homomorphism $\mathcal{C}_{k}^{n} \rightarrow \mathcal{C}_{k-1}^{n}$, $[M^{n}] \mapsto [M^{n}]$.
Let us show that the composition $\varepsilon_{k-1}^{n} \circ \delta_{k}^{n}$ coincides with the natural homomorphism $\mathcal{G}_{k}^{n} \rightarrow \mathcal{G}_{k-1}^{n}$, $[f \colon M^{n} \rightarrow \mathbb{R}] \mapsto [f]$.
By definition, the composition $\varepsilon_{k-1}^{n} \circ \delta_{k}^{n}$ assigns to the class $[f] \in \mathcal{G}_{k}^{n}$ of a $k$-constrained Morse function the class $[g] \in \mathcal{G}_{k-1}^{n}$ represented by an arbitrarily chosen $(k-1)$-constrained Morse function $g \colon \sharp(M^{n}) \rightarrow \mathbb{R}$.
In view of \Cref{lemma epsilon homomorphism} it suffices to show that any two $(k-1)$-constrained Morse functions on $\sharp(M^{n})$ are oriented $(k-1)$-constrained generic bordant.
If $k-1 < n/2$, then this holds by \Cref{theorem cylinder bordism between constrained morse functions}.
If $k-1 \geq n/2$, then $\mathcal{G}_{k}^{n} = \widetilde{\Gamma}(n, 1)$, the bordism group of special generic functions (see \Cref{G and M coincide for large k}).
Hence, $f \colon M^{n} \rightarrow \mathbb{R}$ is a special generic function, $\sharp(M^{n})$ is a homotopy $n$-sphere, and the claim follows from \cite[Lemma 3.1, p. 291]{sae2} because $n \geq 6$.
\par\medskip

This completes the proof of \Cref{theorem constrained generic bordism and connective bordism}.

\begin{remark}\label{remark unoriented version for constrained generic bordism and connective bordism}
Unoriented versions $\mathcal{D}_{k}^{n}$ and $\mathcal{H}_{k}^{n}$ of the groups $\mathcal{C}_{k}^{n}$ and $\mathcal{G}_{k}^{n}$, can be defined in a straightforward way by forgetting about orientations in \Cref{definition connective bordism} and \Cref{definition constrained generic bordism}, respectively.
It is easy to show that the natural epimorphisms $\mathcal{C}_{k}^{n} \rightarrow \mathcal{D}_{k}^{n}$, $[M^{n}] \mapsto [M^{n}]$, has kernel $2 \mathcal{C}_{k}^{n}$.
Moreover, for $k > 1$ we can modify the argument of the proof of \Cref{lemma delta well-defined} in order to show that the epimorphism $\mathcal{G}_{k}^{n} \rightarrow \mathcal{H}_{k}^{n}$, $[f \colon M^{n} \rightarrow \mathbb{R}] \mapsto [f \colon M^{n} \rightarrow \mathbb{R}]$, has kernel $2 \mathcal{G}_{k}^{n}$ (compare \cite[Remark 3.4, p. 293]{sae2}).
For $k = 1$ we use the isomorphism $\mathcal{G}_{1}^{n} \stackrel{\cong}{\longrightarrow} \Omega_{n}^{SO}$ described in \Cref{remark interpolation of generic constrained bordism}, and the analogously defined isomorphism $\mathcal{H}_{1}^{n} \stackrel{\cong}{\longrightarrow} \Omega_{n}^{O}$ to the unoriented smooth bordism group to show that the homomorphism $\mathcal{G}_{1}^{n} \rightarrow \mathcal{H}_{1}^{n}$, $[f \colon M^{n} \rightarrow \mathbb{R}] \mapsto [f \colon M^{n} \rightarrow \mathbb{R}]$, has kernel $2 \mathcal{G}_{1}^{n}$.
(In fact, note that the homomorphism $\Omega_{n}^{SO} \rightarrow \Omega_{n}^{O}$, $[M^{n}] \rightarrow [M^{n}]$, has kernel $2 \Omega_{n}^{SO}$.)
Hence, all the statements of \Cref{theorem constrained generic bordism and connective bordism} carry over to unoriented bordism groups.
\end{remark}

%%%%%%%%%%%%%%%%%%%%%%%%%%%%%%%%%%%%%%%%%%%%%%%%%%%%%%%%%%%%%%%%%%%%%%%%%%%%%%%%%%%%%%
\section{Proof of \Cref{theorem computation of oriented bordism of constrained Morse functions}}\label{section proof constrained bordism}

Fix integers $n \geq 4$ and $1 < k \leq n/2$.
We present the proof of \Cref{theorem computation of oriented bordism of constrained Morse functions} in a sequence of lemmas.

\begin{lemma}\label{lemma surjectivity of beta}
The homomorphism
\begin{displaymath}
\beta_{k}^{n} \colon \mathcal{M}_{k}^{n} \rightarrow \mathcal{G}_{k}^{n} \oplus \mathbb{Z}^{\oplus \lfloor n/2 \rfloor - k}, \quad [f \colon M^{n} \rightarrow \mathbb{R}] \mapsto ([f], \Phi_{k}^{n}([f])),
\end{displaymath}
is surjective, where $\Phi_{k}^{n} \colon \mathcal{M}_{k}^{n} \rightarrow \mathbb{Z}^{\oplus \lfloor n/2 \rfloor - k}$ is defined in \Cref{section cusps}.
\end{lemma}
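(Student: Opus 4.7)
The plan is to factor surjectivity of $\beta_{k}^{n}$ through its two components. The forgetful homomorphism $\mathcal{M}_{k}^{n} \to \mathcal{G}_{k}^{n}$ is tautologically surjective: every class of $\mathcal{G}_{k}^{n}$ is represented by a $k$-constrained Morse function, which simultaneously represents a class in $\mathcal{M}_{k}^{n}$ lifting it. For $k = n/2$, \Cref{remark no cusps for 2k = n} gives $\mathcal{M}_{k}^{n} = \mathcal{G}_{k}^{n}$ and the $\mathbb{Z}$-summand is trivial, so the claim is immediate. Assuming from now on $1 < k < n/2$, it suffices to produce, for each $\mu \in \{\lfloor (n+3)/2 \rfloor, \dots, n-k\}$, a class $[h_{\mu}] \in \ker(\mathcal{M}_{k}^{n} \to \mathcal{G}_{k}^{n})$ such that the vectors $\Phi_{k}^{n}([h_{\mu}])$ assemble into a unimodular integer matrix. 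Given such $h_{\mu}$'s, any pair $(\xi, \mathbf{a}) \in \mathcal{G}_{k}^{n} \oplus \mathbb{Z}^{\oplus \lfloor n/2 \rfloor - k}$ can be hit by combining a Morse representative of $\xi$ with a suitable integer combination of the $[h_{\mu}]$'s.

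For the construction I would take $h_{\mu} \colon S^{n} \to \mathbb{R}$ to be a Morse function with exactly four critical points, namely a minimum, a maximum, and a geometrically cancelling pair of critical points of consecutive indices $\mu - 1$ and $\mu$; such an $h_{\mu}$ exists by adding one cancelling handle pair to the standard height function on $S^{n}$. Because $\mu \leq n-k$ and $\mu - 1 \geq \lceil n/2 \rceil \geq k$ (the latter using $k < n/2$), every indefinite critical index of $h_{\mu}$ lies in $\{k, \dots, n-k\}$, so $h_{\mu}$ is $k$-constrained.

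To show $[h_{\mu}] = 0 \in \mathcal{G}_{k}^{n}$, I would apply Levine's cusp creation technique (\Cref{section cusps}) on the cylinder $S^{n} \times [0,1]$ to introduce a single cusp eliminating the cancelling pair, producing an oriented generic bordism from $h_{\mu}$ to the standard height function $h_{\mathrm{std}}$ on $S^{n}$. The two fold lines meeting at this cusp carry absolute indices $\mu - 1$ and $\mu$, both lying in $\{\lceil n/2 \rceil, \dots, n-k\}$, so the bordism respects the $k$-constraint. Since $[h_{\mathrm{std}}] = 0 \in \mathcal{G}_{k}^{n}$ (this follows functorially from the case $k > n/2$, where $\mathcal{G}_{k}^{n} \cong \Theta_{n}$ and the standard sphere represents the identity), concatenation of bordisms yields the desired null-bordism of $h_{\mu}$.

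Finally, a direct count gives $\Phi_{k}^{n}([h_{\mu}])$: for $\nu \in \{\lfloor (n+3)/2 \rfloor, \dots, n-k\}$ the inequality $n - \nu < n/2 \leq \mu - 1$ forces $C_{n-\nu}(h_{\mu}) = 0$, whereas $C_{\nu}(h_{\mu}) = 1$ precisely when $\nu \in \{\mu - 1, \mu\}$. Thus $\Phi_{k}^{n}([h_{\mu}])$ carries a $1$ at coordinate $\mu$, a $1$ at coordinate $\mu - 1$ whenever $\mu - 1$ still lies in the index range, and zeros elsewhere. Arranging these columns in order of increasing $\mu$ yields an upper-triangular integer matrix with unit diagonal, hence unimodular. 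The main obstacle in executing this plan will be verifying that the cusp used in the null-bordism of $h_{\mu}$ genuinely respects the $k$-constraint on absolute fold indices; it is precisely this numerical check that forces the lower cutoff $\mu \geq \lfloor (n+3)/2 \rfloor$ built into the definition of $\Phi_{k}^{n}$.
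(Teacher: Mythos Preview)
Your argument is correct and rests on the same underlying mechanism as the paper's proof: both use birth/death of cancelling Morse pairs, realized as a single cusp in a generic homotopy, to adjust the vector $\Phi_{k}^{n}$ while staying in the same $\mathcal{G}_{k}^{n}$-class. The paper phrases this more directly: given a target $([g], c_{1}, \dots, c_{\lfloor n/2 \rfloor - k})$, it inserts the required critical point pairs into $g$ itself on $M^{n}$ and observes that the resulting generic homotopy on $M^{n} \times [0,1]$ is a $k$-constrained generic bordism. Your version instead isolates the effect of each insertion as a standalone class $[h_{\mu}] \in \mathcal{M}_{k}^{n}$ supported on $S^{n}$, checks $[h_{\mu}] = 0$ in $\mathcal{G}_{k}^{n}$, and then invokes unimodularity of the matrix $(\widetilde{\varphi}_{\nu}([h_{\mu}]))_{\nu, \mu}$ to hit an arbitrary integer vector. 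This buys you a cleaner separation between the two summands and makes the index checks (in particular the lower cutoff $\mu - 1 \geq \lceil n/2 \rceil$ needed for the absolute-index constraint) more transparent, at the cost of a slightly longer argument; the paper's direct modification avoids the linear algebra step but leaves those numerics implicit.
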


\begin{proof}
Given an element $([g \colon M^{n} \rightarrow \mathbb{R}], c_{1}, \dots, c_{\lfloor n/2\rfloor - k}) \in \mathcal{G}_{k}^{n} \oplus \mathbb{Z}^{\oplus \lfloor n/2 \rfloor - k}$, we can use the same argument as in \cite[p. 220]{ike} to modify $g$ iteratively by introducing pairs of critical points of successive Morse indices in order to produce a $k$-constrained Morse function $f \colon M^{n} \rightarrow \mathbb{R}$ which satisfies $\Phi_{k}^{n}([f]) = (c_{1}, \dots, c_{\lfloor n/2\rfloor - k})$.
Since $[f] = [g]$ in $\mathcal{G}_{k}^{n}$ by means of a generic homotopy that realizes the sequence of births of critical point pairs, we obtain $\beta_{k}^{n}([f \colon M^{n} \rightarrow \mathbb{R}]) = ([g], c_{1}, \dots, c_{\lfloor n/2\rfloor - k})$.
\end{proof}

The following lemma completes the proof of part $(i)$.

\begin{lemma}
If $n$ is even, then $\beta_{k}^{n}$ is injective.
\end{lemma}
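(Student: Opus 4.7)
The plan is to reduce the lemma directly to the cusp-elimination theorem (\Cref{proposition creating and eliminating cusps}). Suppose $[f\colon M^n\to\mathbb{R}]\in\ker\beta_k^n$. Unpacking the definition of $\beta_k^n$, this means simultaneously that $[f]=0\in\mathcal{G}_k^n$ and that $\Phi_k^n([f])=0\in\mathbb{Z}^{\oplus\lfloor n/2\rfloor-k}$. By definition of $\mathcal{G}_k^n$ (see \Cref{definition constrained generic bordism}), the first condition gives an oriented $k$-constrained generic bordism $G\colon W^{n+1}\to\mathbb{R}^2$ from $f$ to the empty map $f_\emptyset\colon\emptyset\to\mathbb{R}$.

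The idea is now to upgrade $G$ to a $k$-constrained bordism, i.e.\ to eliminate the cusps of $G$. Here \Cref{proposition creating and eliminating cusps} is tailor-made for the task: it asserts that if $\Phi_k^n([g_0])=\Phi_k^n([g_1])$ and (when $n$ is odd) $G$ has an even number of cusps, then $[g_0]=[g_1]\in\mathcal{M}_k^n$. In our setup $g_0=f$ and $g_1=f_\emptyset$, and the relevant invariants are $\Phi_k^n([f])=0$ by hypothesis and $\Phi_k^n([f_\emptyset])=0$ trivially, since $f_\emptyset$ has no critical points.

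Because we are assuming $n$ is even, the parity condition on the number of cusps of $G$ is not required, so \Cref{proposition creating and eliminating cusps} applies without further work and yields $[f]=[f_\emptyset]=0\in\mathcal{M}_k^n$. Hence $\beta_k^n$ is injective. There is no genuine obstacle in this argument; the entire content has been absorbed into \Cref{proposition creating and eliminating cusps}, and the role of the parity assumption on $n$ is precisely to bypass the cusp-count hypothesis there. (The odd case, by contrast, is exactly where the obstruction $A_k^n$ in parts $(ii)$--$(iv)$ of \Cref{theorem computation of oriented bordism of constrained Morse functions} can appear, which explains the asymmetry in the statement.)
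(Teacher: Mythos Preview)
Your proof is correct and follows essentially the same approach as the paper: assume $[f]\in\ker\beta_k^n$, extract from $[f]=0\in\mathcal{G}_k^n$ an oriented $k$-constrained generic bordism $G$ from $f$ to $f_\emptyset$, and then invoke \Cref{proposition creating and eliminating cusps} using $\Phi_k^n([f])=0$ and the evenness of $n$ to conclude $[f]=0\in\mathcal{M}_k^n$. The paper's proof is just a terser version of exactly this argument.
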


\begin{proof}
Suppose that $\beta_{k}^{n}([f]) = 0 \in \mathcal{G}_{k}^{n} \oplus \mathbb{Z}^{\oplus \lfloor n/2 \rfloor - k}$ for some $[f \colon M^{n} \rightarrow \mathbb{R}] \in \mathcal{M}_{k}^{n}$.
Then, there exists an oriented $k$-constrained generic bordism $G \colon W^{n+1} \rightarrow \mathbb{R}^{2}$ from $f$ to $f_{\emptyset}$.
Since $\Phi_{k}^{n}([f]) = 0$, \Cref{proposition creating and eliminating cusps} implies that $[f] = [f_{\emptyset}] = 0 \in \mathcal{M}_{k}^{n}$.
\end{proof}

In order to studying the kernel of $\beta_{k}^{n}$ when $n$ is odd, we introduce a homomorphism $\alpha_{k}^{n} \colon \mathbb{Z}/2 \rightarrow \mathcal{M}_{k}^{n}$ as follows.

\begin{lemma}\label{lemma definition of homomorphism alpha}
Suppose that $n$ is odd, and let $l := (n-1)/2$.
If $f, g \colon S^{n} \rightarrow \mathbb{R}$ are Morse functions with exactly $4$ critical points whose Morse indices form the set $\{0, l, l+1, n\}$, then $[f] = [g] \in \mathcal{M}_{k}^{n}$.
Hence, there is a well-defined homomorphism
\begin{displaymath}
\alpha_{k}^{n} \colon \mathbb{Z}/2 \rightarrow \mathcal{M}_{k}^{n}, \qquad \overline{1} \mapsto [f_{\alpha} \colon S^{n} \rightarrow \mathbb{R}],
\end{displaymath}
where $f_{\alpha} \colon S^{n} \rightarrow \mathbb{R}$ denotes a fixed Morse function with the above properties.
\end{lemma}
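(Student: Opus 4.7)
The plan is to prove the lemma in two stages. Stage one establishes that any two Morse functions $f, g \colon S^n \to \mathbb{R}$ with critical index set $\{0, l, l+1, n\}$ represent the same class in $\mathcal{M}_k^n$. Stage two verifies that this common class $[f_\alpha]$ is killed by $2$, so that $\bar{1} \mapsto [f_\alpha]$ extends to a homomorphism from $\mathbb{Z}/2$. The existence of at least one such $f_\alpha$ is immediate, since one may introduce a cancelling pair of critical points of successive indices $l, l+1$ into the standard height function on $S^n$.

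For stage one, note first that because $n$ is odd and $1 < k \leq n/2$, we actually have $k \leq (n-1)/2 = l$, so both $l$ and $l+1$ lie in $\{k,\dots,n-k\}$ and both $f$ and $g$ are genuinely $k$-constrained. Since $n \geq 5$ and $1 < k < n/2$, \Cref{theorem cylinder bordism between constrained morse functions} provides an oriented $k$-constrained generic bordism $F \colon S^n \times [0,1] \to \mathbb{R}^2$ from $f$ to $g$. I will then promote $F$ to a cusp-free oriented $k$-constrained bordism by applying \Cref{proposition creating and eliminating cusps}. Its first hypothesis, $\Phi_k^n([f]) = \Phi_k^n([g])$, is automatic: with $\lfloor n/2 \rfloor = l$ and $\lfloor (n+3)/2 \rfloor = l+2$, for each $\lambda \in \{l+2,\dots,n-k\}$ we have $n-\lambda \in \{k,\dots,l-1\}$, so both $\lambda$ and $n-\lambda$ miss the set $\{0,l,l+1,n\}$, giving $\widetilde{\varphi}_\lambda([f]) = \widetilde{\varphi}_\lambda([g]) = 0$.

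The main subtlety is the parity hypothesis on the number of cusps of $F$ required because $n$ is odd. Here I plan to invoke \Cref{proposition euler characteristic and cusps} with $g_0 = f$, $g_1 = g$, and $W = S^n \times [0,1]$. Writing $c$ for the number of cusps of $F$ and $\nu = 8$ for the total number of critical points of $f \sqcup g$, the proposition yields $c + 4 \equiv \chi(S^n \times [0,1]) \pmod 2$. Since $n$ is odd, $\chi(S^n \times [0,1]) = \chi(S^n) = 0$, so $c$ is even, completing the verification of the hypotheses of \Cref{proposition creating and eliminating cusps} and giving $[f] = [g] \in \mathcal{M}_k^n$.

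For stage two, the group inverse is $-[f_\alpha] = [-f_\alpha \colon -S^n \to \mathbb{R}]$. A coordinate-hyperplane reflection of $S^n$ is an orientation-reversing self-diffeomorphism, so $-S^n \cong S^n$ as oriented manifolds; pulling $-f_\alpha$ back along such an identification produces a Morse function on $S^n$ whose critical indices are $\{n-0,\, n-l,\, n-(l+1),\, n-n\} = \{0,l,l+1,n\}$. Stage one then forces $-[f_\alpha] = [f_\alpha]$, hence $2[f_\alpha] = 0$, and $\alpha_k^n$ is a well-defined homomorphism. The only place requiring any care in the write-up is the Euler-characteristic parity count in stage one; the rest consists of straightforward applications of the tools already set up in \Cref{section cusps} and \Cref{two-index theorem}.
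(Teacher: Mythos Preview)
Your proof is correct and follows essentially the same route as the paper: apply \Cref{theorem cylinder bordism between constrained morse functions} to get a generic bordism on $S^n\times[0,1]$, use \Cref{proposition euler characteristic and cusps} to see that the cusp count is even, invoke \Cref{proposition creating and eliminating cusps} to conclude $[f]=[g]$, and then use an orientation-reversing self-diffeomorphism of $S^n$ to deduce $2[f_\alpha]=0$. You simply spell out a few details (the vanishing of $\Phi_k^n$ and the Euler-characteristic computation) that the paper leaves implicit.
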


\begin{proof}
By \Cref{theorem cylinder bordism between constrained morse functions} there exists an oriented $l$-constrained generic bordism $G \colon S^{n} \times [0, 1] \rightarrow \mathbb{R}^{2}$ from $f$ to $g$.
Moreover, \Cref{proposition euler characteristic and cusps} implies that the number of cusps of $G$ is even.
Hence, the claim that $[f] = [g] \in \mathcal{M}_{k}^{n}$ follows from \Cref{proposition creating and eliminating cusps}.
As $S^{n}$ admits an orientation reversing automorphism, we may choose $g = -f$, and obtain $2 [f] = 0$.
\end{proof}

\begin{lemma}\label{lemma image of alpha equals kernel of beta}
If $n$ is odd, then $\operatorname{im} \alpha_{k}^{n} = \operatorname{ker} \beta_{k}^{n}$.
\end{lemma}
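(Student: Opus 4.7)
The plan is to verify the two inclusions separately, with \Cref{proposition creating and eliminating cusps} serving as the principal tool in both directions.

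For $\operatorname{im} \alpha_{k}^{n} \subseteq \operatorname{ker} \beta_{k}^{n}$, I will check that both components of $\beta_{k}^{n}([f_{\alpha}])$ vanish. Writing $l = (n-1)/2$, the critical indices of $f_{\alpha}$ are $\{0, l, l+1, n\}$, and neither these indices nor their reflections $\lambda \mapsto n - \lambda$ lie in the range $\{l+2, \dots, n-k\} = \{\lfloor (n+3)/2 \rfloor, \dots, n-k\}$ relevant for $\Phi_{k}^{n}$, whence $\Phi_{k}^{n}([f_{\alpha}]) = 0$. Since $S^{n}$ bounds the contractible disc $D^{n+1}$, we also have $[S^{n}] = 0 \in \mathcal{C}_{k}^{n}$, so the homomorphism $\varepsilon_{k}^{n}$ of \Cref{theorem constrained generic bordism and connective bordism} gives $[f_{\alpha}] = \varepsilon_{k}^{n}([S^{n}]) = 0 \in \mathcal{G}_{k}^{n}$.

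For the reverse inclusion, let $[f \colon M^{n} \to \mathbb{R}] \in \operatorname{ker} \beta_{k}^{n}$. Since $[f] = 0 \in \mathcal{G}_{k}^{n}$, there exists an oriented $k$-constrained generic bordism $G \colon W^{n+1} \to \mathbb{R}^{2}$ from $f$ to $f_{\emptyset}$; denote its number of cusps by $c$. If $c$ is even, then the assumption $\Phi_{k}^{n}([f]) = 0 = \Phi_{k}^{n}([f_{\emptyset}])$ lets \Cref{proposition creating and eliminating cusps} conclude $[f] = 0 = \alpha_{k}^{n}(\overline{0}) \in \operatorname{im} \alpha_{k}^{n}$. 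If $c$ is odd, my plan is to produce an oriented $k$-constrained generic bordism $G' \colon D^{n+1} \to \mathbb{R}^{2}$ from $f_{\alpha}$ to $f_{\emptyset}$; by \Cref{proposition euler characteristic and cusps} its cusp number $c'$ satisfies $c' + \nu(f_{\alpha})/2 \equiv \chi(D^{n+1}) = 1 \operatorname{mod} 2$, and since $\nu(f_{\alpha}) = 4$ this forces $c'$ to be odd. Then $G \sqcup G'$ is an oriented $k$-constrained generic bordism from $f \sqcup f_{\alpha}$ to $f_{\emptyset}$ with the even cusp number $c + c'$, and since $\Phi_{k}^{n}([f \sqcup f_{\alpha}]) = 0$, \Cref{proposition creating and eliminating cusps} yields $[f \sqcup f_{\alpha}] = 0 \in \mathcal{M}_{k}^{n}$; combined with the relation $-[f_{\alpha}] = [f_{\alpha}]$ from \Cref{lemma definition of homomorphism alpha}, this shows $[f] = [f_{\alpha}] \in \operatorname{im} \alpha_{k}^{n}$.

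The main obstacle is constructing the auxiliary bordism $G'$ on $D^{n+1}$. My approach is to start with the explicit map $\phi \colon D^{n+1} \to \mathbb{R}^{2}$, $x \mapsto (x_{1}, 1 - ||x||^{2})$, which restricts on $S^{n} = \partial D^{n+1}$ to the $2$-critical-point height Morse function $h(x) = x_{1}$ and, by a direct normal-form computation around any point of its singular locus $\{(t, 0, \dots, 0) : t \in [-1, 1]\}$, is a fold map with all fold points of absolute index $n$, hence cusp-free and $k$-constrained. After a routine adjustment of $\phi$ in a collar of $\partial D^{n+1}$ to achieve the product form required by \Cref{definition constrained generic bordism}, this gives a $k$-constrained generic bordism from $h$ to $f_{\emptyset}$ with no cusps. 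Finally, \Cref{theorem cylinder bordism between constrained morse functions} (applicable because $n$ is odd with $n \geq 5$ and $1 < k \leq (n-1)/2 < n/2$) provides an oriented $k$-constrained generic bordism from $f_{\alpha}$ to $h$ over $S^{n} \times [0, 1]$, and gluing it to $\phi$ along $S^{n}$ produces $G'$ over a manifold diffeomorphic to $D^{n+1}$, as required.
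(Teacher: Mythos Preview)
Your proof is correct and follows essentially the same route as the paper's: both construct an auxiliary $k$-constrained generic bordism on $D^{n+1}$ from $f_{\alpha}$ to $f_{\emptyset}$ via \Cref{theorem cylinder bordism between constrained morse functions}, use \Cref{proposition euler characteristic and cusps} to force that bordism to have an odd number of cusps, and then invoke \Cref{proposition creating and eliminating cusps} on whichever of $G$ or $G \sqcup G'$ has even cusp count; your construction of $G'$ is simply more explicit than the paper's one-line appeal to \Cref{theorem cylinder bordism between constrained morse functions}. One small correction: your argument for $[f_{\alpha}] = 0 \in \mathcal{G}_{k}^{n}$ via $\varepsilon_{k}^{n}$ cites \Cref{theorem constrained generic bordism and connective bordism}, which assumes $n \geq 6$, whereas the present lemma must also cover $n = 5$; you should instead observe that your own construction of $G'$ already exhibits a $k$-constrained generic bordism from $f_{\alpha}$ to $f_{\emptyset}$, giving $[f_{\alpha}] = 0 \in \mathcal{G}_{k}^{n}$ directly for all odd $n \geq 5$.
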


\begin{proof}
Note that $\widetilde{\Psi}^{k}([f_{\alpha}]) = 0 \in \mathcal{G}_{k}^{n}$ and $\Phi_{k}^{n}([f_{\alpha}]) = 0 \in \mathbb{Z}^{\oplus \lfloor n/2 \rfloor - k}$, and thus $\operatorname{im} \alpha_{k}^{n} \subset \operatorname{ker} \beta_{k}^{n}$.
Conversely, suppose that $\beta_{k}^{n}([f]) = 0 \in \mathcal{G}_{k}^{n} \oplus \mathbb{Z}^{\oplus \lfloor n/2 \rfloor - k}$ for some $[f \colon M \rightarrow \mathbb{R}] \in \mathcal{M}_{k}^{n}$.
Then, there exists an oriented $k$-constrained generic bordism $G \colon W^{n+1} \rightarrow \mathbb{R}^{2}$ from $f$ to $f_{\emptyset}$.
Moreover, by means of \Cref{theorem cylinder bordism between constrained morse functions}, we can construct an oriented $k$-constrained generic bordism $G_{\alpha} \colon D^{n+1} \rightarrow \mathbb{R}^{2}$ from $f_{\alpha}$ to $f_{\emptyset}$.
By \Cref{proposition euler characteristic and cusps} exactly one of the oriented $k$-constrained generic bordisms $G$ and $G \sqcup G_{\alpha}$, say $G_{0}$, has an even number of cusps.
Hence, \Cref{proposition creating and eliminating cusps} implies that $0 = [f_{0}] = [f] + m [f_{\alpha}] \in \mathcal{M}_{k}^{n}$ for suitable $m \in \{0, 1\}$, and we conclude that $\operatorname{ker} \beta_{k}^{n} \subset \operatorname{im} \alpha_{k}^{n}$.
\end{proof}

\Cref{existence of a splitting} below completes the proof of part $(ii)$.
For this purpose, recall from \cite[Definition 2.5, p. 214]{ike} that there is for $n \equiv 1 \, \operatorname{mod} 4$ a well-defined homomorphism $\Lambda \colon \mathcal{M}_{1}^{n} \rightarrow \mathbb{Z}/2$ which assigns to $[f] \in \mathcal{M}_{1}^{n}$ the element $\Lambda([f]) = \sigma(f) - \sigma(M^{n}; \mathbb{Q}) \in \mathbb{Z}/2$, where $\sigma(f) = \sum_{\lambda = 0}^{(n-1)/2}C_{\lambda}(f)$, and $\sigma(M^{n}; \mathbb{Q})$ denotes the Kervaire semi-characteristic of $M^{n}$ over $\mathbb{Q}$ (see \cite{ker}).
Composition with the natural homomorphism $\mathcal{M}_{k}^{n} \rightarrow \mathcal{M}_{1}^{n}$ yields a homomorphism $\Lambda^{n}_{k} \colon \mathcal{M}_{k}^{n} \rightarrow \mathbb{Z}/2$ which turns out to be a splitting of $\alpha_{k}^{n}$ for $n \equiv 1 \, \operatorname{mod} 4$.

\begin{lemma}\label{existence of a splitting}
For $n \equiv 1 \, \operatorname{mod} 4$ we have $\Lambda^{n}_{k} \circ \alpha_{k}^{n} = \operatorname{id}_{\mathbb{Z}/2}$.
\end{lemma}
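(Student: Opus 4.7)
The plan is to verify the identity $\Lambda^n_k \circ \alpha^n_k = \operatorname{id}_{\mathbb{Z}/2}$ by direct evaluation on the generator $\overline{1} \in \mathbb{Z}/2$. Since $\alpha^n_k(\overline{0}) = 0$, it suffices to show that $\Lambda^n_k([f_\alpha]) = \overline{1}$, where $f_\alpha \colon S^n \to \mathbb{R}$ is the fixed reference Morse function with exactly four critical points of Morse indices $\{0, l, l+1, n\}$, with $l = (n-1)/2$.

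First I would unwind the definition of $\Lambda^n_k$. By construction, $\Lambda^n_k$ equals the composition of the natural forgetful homomorphism $\mathcal{M}^n_k \to \mathcal{M}^n_1$ (which sends $[f_\alpha]$ to the class of the same Morse function viewed as a $1$-constrained Morse function on $S^n$) with Ikegami's homomorphism $\Lambda \colon \mathcal{M}^n_1 \to \mathbb{Z}/2$, $[f] \mapsto \sigma(f) - \sigma(M^n; \mathbb{Q}) \bmod 2$. Thus the computation reduces to evaluating $\sigma(f_\alpha)$ and the Kervaire semi-characteristic $\sigma(S^n; \mathbb{Q})$.

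Next I would carry out the two short numerical computations. Writing $n = 4m+1$, we have $l = 2m = (n-1)/2$. The Morse indices of $f_\alpha$ that are $\leq (n-1)/2$ are exactly $0$ and $l$, so
\begin{displaymath}
\sigma(f_\alpha) \;=\; \sum_{\lambda = 0}^{(n-1)/2} C_\lambda(f_\alpha) \;=\; 2.
\end{displaymath}
On the other hand, the rational homology of $S^n$ is concentrated in degrees $0$ and $n$, so the Kervaire semi-characteristic over $\mathbb{Q}$ satisfies
\begin{displaymath}
\sigma(S^n; \mathbb{Q}) \;=\; \sum_{i=0}^{(n-1)/2} \dim_{\mathbb{Q}} H_i(S^n; \mathbb{Q}) \;=\; 1.
\end{displaymath}

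Combining these, $\Lambda([f_\alpha]) = 2 - 1 = 1 \bmod 2 = \overline{1}$ in $\mathbb{Z}/2$, hence $\Lambda^n_k(\alpha^n_k(\overline{1})) = \overline{1}$, which gives $\Lambda^n_k \circ \alpha^n_k = \operatorname{id}_{\mathbb{Z}/2}$ as desired. There is essentially no obstacle here beyond checking that the natural homomorphism $\mathcal{M}^n_k \to \mathcal{M}^n_1$ indeed sends the class of $f_\alpha$ (as a $k$-constrained Morse function) to the class of the same function viewed as a $1$-constrained one, so that the semi-characteristic computation may be performed directly on the representative $f_\alpha$; this is built into the definition of the forgetful map.
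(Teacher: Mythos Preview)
Your proof is correct and follows exactly the paper's approach: the paper's proof simply states that it suffices to check $\Lambda^{n}_{k}([f_{\alpha}]) = \overline{1}$, which follows from $\sigma(f_{\alpha}) = 2$ and $\sigma(S^{n}; \mathbb{Q}) = 1$. You have spelled out the same computation in more detail.
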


\begin{proof}
It suffices to note that $\Lambda^{n}_{k}([f_{\alpha}]) = \overline{1} \in \mathbb{Z}/2$, which follows from $\sigma(f_{\alpha}) = 2$ and $\sigma(S^{n}; \mathbb{Q}) = 1$.
\end{proof}

In \Cref{injectivity of alpha depending on the sequence} below we prove the remaining parts $(iii)$ and $(iv)$ by characterizing injectivity of $\alpha_{k}^{n}$ for $n \equiv 3 \, \operatorname{mod} 4$.
For this purpose, we introduce the required sequence $\kappa_{1}, \kappa_{2}, \dots$ of positive integers in \Cref{definition sequence existence of odd euler characteristic and constrained generic map} below.
Note that $\mathbb{C}P^{2i}$ is for any integer $i \geq 1$ a closed $4i$-manifold with odd Euler characteristic, and any generic map $\mathbb{C}P^{2i} \rightarrow \mathbb{R}^{2}$ defines an oriented $1$-constrained generic bordism from $f_{\emptyset} \colon \emptyset \rightarrow \mathbb{R}$ to $f_{\emptyset} \colon \emptyset \rightarrow \mathbb{R}$.

\begin{definition}\label{definition sequence existence of odd euler characteristic and constrained generic map}
For every integer $i \geq 1$ let $\kappa_{i}$ be the greatest integer $k \geq 1$ for which there exists an oriented $k$-constrained generic bordism $V^{4i} \rightarrow \mathbb{R}^{2}$ from $f_{\emptyset} \colon \emptyset \rightarrow \mathbb{R}$ to $f_{\emptyset} \colon \emptyset \rightarrow \mathbb{R}$ such that $V^{4i}$ has odd Euler characteristic (or, equivalently, odd signature).
\end{definition}

\begin{lemma}\label{injectivity of alpha depending on the sequence}
Suppose that $n \equiv 3 \, \operatorname{mod} 4$.
Then, $\alpha_{k}^{n}$ is injective if and only if $k > \kappa_{(n+1)/4}$.
\end{lemma}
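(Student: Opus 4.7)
Since $\alpha_k^n$ has domain $\mathbb{Z}/2$ and sends $\overline{1} \mapsto [f_\alpha]$, the plan is to translate injectivity into $[f_\alpha] \neq 0 \in \mathcal{M}_k^n$ and then prove the concrete equivalence $[f_\alpha] = 0$ in $\mathcal{M}_k^n$ if and only if $k \leq \kappa_i$, where $i := (n+1)/4$. Both directions will be built out of the Euler-characteristic congruence of \Cref{proposition euler characteristic and cusps}, the cusp-elimination tool \Cref{proposition creating and eliminating cusps}, and the canonical $k$-constrained generic bordism $G_\alpha \colon D^{n+1} \to \mathbb{R}^2$ from $f_\alpha$ to $f_\emptyset$ produced in the proof of \Cref{lemma image of alpha equals kernel of beta}.

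A preliminary step I would dispatch first is the observation that $\Phi_k^n([f_\alpha]) = 0 = \Phi_k^n([f_\emptyset])$. Since the Morse indices of $f_\alpha$ are $0, (n-1)/2, (n+1)/2, n$ while $\Phi_k^n$ only reads off the pairs $(C_\lambda, C_{n-\lambda})$ for $\lambda \in \{(n+3)/2, \ldots, n-k\}$, these indices avoid the relevant range and its reflection, so the vanishing is automatic. This unlocks \Cref{proposition creating and eliminating cusps} as soon as we can produce any $k$-constrained generic bordism from $f_\alpha$ to $f_\emptyset$ with an even total cusp count.

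For the implication ``$k \leq \kappa_i$ implies $[f_\alpha] = 0$'' I would fix a witness $V^{n+1} \to \mathbb{R}^2$ as in \Cref{definition sequence existence of odd euler characteristic and constrained generic map} with $\chi(V)$ odd, and form $G_\alpha \sqcup V$, a $k$-constrained generic bordism from $f_\alpha$ to $f_\emptyset$. Applying \Cref{proposition euler characteristic and cusps} separately shows that $G_\alpha$ has an odd cusp count (since $\nu = 4$ and $\chi(D^{n+1}) = 1$) and that $V$ has an odd cusp count (since $\nu = 0$ and $\chi(V)$ is odd); the total for the disjoint union is then even, and \Cref{proposition creating and eliminating cusps} delivers a $k$-constrained bordism witnessing $[f_\alpha] = 0$.

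For the converse I would start from a cusp-free oriented $k$-constrained bordism $W^{n+1}$ from $f_\alpha$ to $f_\emptyset$ (witnessing $[f_\alpha] = 0$) and glue $W$ to $-D^{n+1}$, equipped with $G_\alpha$, along the shared boundary $S^n$ via the collar form $f_\alpha \times \operatorname{id}$ that both maps carry. The outcome is a closed oriented $4i$-manifold $V$ with a $k$-constrained generic map to $\mathbb{R}^2$; using $\chi(S^n) = 0$ and $\chi(D^{n+1}) = 1$ one computes $\chi(V) = \chi(W) + 1$, while \Cref{proposition euler characteristic and cusps} applied to $W$ forces $\chi(W) \equiv 0 \bmod 2$, so $\chi(V)$ is odd, whence $k \leq \kappa_i$ by definition. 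The main care-point I anticipate is this gluing step itself: one must verify that the constrained generic structures on $W$ and $-D^{n+1}$ fit together across the shared collar into a globally smooth $k$-constrained generic map on $V$, which should follow from both maps having the standard product form $f_\alpha \times \operatorname{id}$ near $S^n$ and the absolute-index and definite/indefinite data on fold loci being preserved verbatim under the gluing and orientation reversal.
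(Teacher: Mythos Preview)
Your proposal is correct and follows essentially the same approach as the paper's own proof: both directions hinge on the canonical bordism $G_\alpha \colon D^{n+1} \to \mathbb{R}^2$, the cusp-parity congruence of \Cref{proposition euler characteristic and cusps}, and the cusp-elimination \Cref{proposition creating and eliminating cusps}, with the forward direction gluing a cusp-free $W$ to $D^{n+1}$ to produce a closed witness with odd Euler characteristic, and the backward direction taking the disjoint union $G_\alpha \sqcup G_1$ to obtain even total cusp count. Your extra remarks on verifying $\Phi_k^n([f_\alpha])=0$ and on the smooth compatibility of the gluing along the shared collar are welcome care-points that the paper leaves implicit.
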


\begin{proof}
As in the proof of \Cref{lemma image of alpha equals kernel of beta} we can construct an oriented $k$-constrained generic bordism $G_{\alpha} \colon D^{n+1} \rightarrow \mathbb{R}^{2}$ from $f_{\alpha}$ to $f_{\emptyset}$.

``$\Leftarrow$''.
Let $k > \kappa_{(n+1)/4}$.
If we suppose that $[f_{\alpha}] = 0 \in \mathcal{M}_{k}^{n}$, then there exists an oriented $k$-constrained bordism $F \colon W^{n+1} \rightarrow \mathbb{R}^{2}$ from $f_{\emptyset}$ to $f_{\alpha}$.
From \Cref{proposition euler characteristic and cusps} we conclude that $\chi(W)$ is even because $F$ has no cusps and $f_{\alpha}$ has $4$ critical points.
Hence, $V^{n+1} := W^{n+1} \cup_{S^{n}} D^{n+1}$ is an oriented closed manifold with odd Euler characteristic.
Now $F$ and $G_{\alpha}$ glue to an oriented $k$-constrained generic bordism $V^{n+1} \rightarrow \mathbb{R}^{2}$ from $f_{\emptyset}$ to $f_{\emptyset}$, which contradicts the assumption that $k > \kappa_{(n+1)/4}$.

``$\Rightarrow$''.
Suppose that $k \leq \kappa_{(n+1)/4}$, and let $V^{n+1}$ be a closed manifold with odd Euler characteristic which admits an oriented $k$-constrained generic bordism $G_{1} \colon V^{n+1} \rightarrow \mathbb{R}^{2}$ from $f_{\emptyset}$ to $f_{\emptyset}$.
Note that both $G_{\alpha}$ and $G_{1}$ have an odd number of cusps by \Cref{proposition euler characteristic and cusps}.
Hence, the oriented $k$-constrained generic bordism $G_{\alpha} \sqcup G_{1} \colon D^{n+1} \sqcup V^{n+1} \rightarrow \mathbb{R}^{2}$ from $f_{\alpha}$ to $f_{\emptyset}$ has an even number of cusps.
Therefore, $[f_{\alpha}] = 0 \in \mathcal{M}_{k}^{n}$ by \Cref{proposition creating and eliminating cusps}.
\end{proof}

\begin{remark}
We do not know if the short exact sequence in \Cref{theorem computation of oriented bordism of constrained Morse functions}$(iv)$ splits.
\end{remark}

Finally, the sequences $\kappa_{1}, \kappa_{2}, \dots$ and $\gamma_{1}, \gamma_{2}, \dots$ are related to each other as follows.
The inequality $\gamma_{i} \leq \kappa_{i}$ is (for $\gamma_{i} > 1$) a consequence of \Cref{existence of constrained bordisms}.
Moreover, the inequality $\kappa_{i} \leq \gamma_{i} + 1$ follows (for $\kappa_{i} > 1$) from \Cref{lemma delta well-defined}, where one has to take care of the parity of the Euler characteristic.
\par\medskip

This completes the proof of \Cref{theorem computation of oriented bordism of constrained Morse functions}.

\begin{remark}
Note that $\kappa_{i} < 2i$ for all $i \geq 1$ (where we have used \Cref{remark no cusps for 2k = n} and \Cref{proposition euler characteristic and cusps} to exclude the case that $\kappa_{i} = 2i$).
Hence, for $\gamma_{i} = 2i-1$ (which happens (at least) for $i = 1, 2 ,4$) we have $\kappa_{i} = \gamma_{i}$.
Moreover, $\gamma_{i} \not\equiv 2, 4, 5, 6 \, \operatorname{mod} 8$ implies that $\kappa_{i} \not\equiv 5, 6 \, \operatorname{mod} 8$ because $\gamma_{i} \in \{\kappa_{i}-1, \kappa_{i}\}$.
\end{remark}

\begin{remark}\label{remark unoriented version for bordism of constrained Morse functions}
An unoriented version $\mathcal{N}_{k}^{n}$ of the group $\mathcal{M}_{k}^{n}$ can be defined in a straightforward way by forgetting orientations in \Cref{definition constrained bordism}.
Then, one obtains a version of \Cref{theorem computation of oriented bordism of constrained Morse functions} involving the unoriented bordism groups $\mathcal{N}_{k}^{n}$ and $\mathcal{H}_{k}^{n}$ (see \Cref{remark unoriented version for constrained generic bordism and connective bordism}), where \Cref{definition sequence existence of odd euler characteristic and constrained generic map} and \Cref{definition sequence gamma} have to be modified appropriately.
\end{remark}

%%%%%%%%%%%%%%%%%%%%%%%%%%%%%%%%%%%%%%%%%%%%%%%%%%%%%%%%%%%%%%%%%%%%%%%%%%%%%%%%%%%%%%
\section{Detecting Exotic Kervaire Spheres}\label{kervaire spheres}\label{application to Kervaire spheres}

We discuss how bordism groups of constrained Morse functions are capable of detecting exotic Kervaire spheres in certain high dimensions.
Recall that Kervaire spheres are a concrete family of homotopy spheres that can be obtained from a plumbing construction as follows (see \cite[p. 162]{law}).
The unique Kervaire sphere $\Sigma_{K}^{n}$ of dimension $n = 4k+1$ can be defined as the boundary of the parallelizable $(4k+2)$-manifold given by plumbing together two copies of the tangent disc bundle of $S^{2k+1}$.
In \Cref{theorem kervaire spheres} we will need to impose stronger conditions on the dimension $n$.
These originate from a result of Stolz \cite{stol} on highly connected bordisms that is used in our argument, and we do not know if they can be eliminated.

Let $\Theta_{n}$ denote the group of homotopy $n$-spheres.
Recall that $\Theta_{n}$ consists of $h$-cobordism classes of oriented differentiable homotopy $n$-spheres, and its group structure is induced by forming the oriented connected sum of homotopy spheres.
The group of homotopy spheres has been introduced and studied by Kervaire and Milnor \cite{KM}, who showed that for $n \geq 5$, $\Theta_{n}$ is a finite abelian group.
As remarked in \cite[p. 505]{KM}, $\Theta_{n}$ classifies for $n \geq 5$ oriented diffeomorphism classes of oriented closed $n$-manifolds homeomorphic to $S^{n}$, where non-trivial classes are usually called \emph{exotic spheres}.

Let $bP_{n+1} \subset \Theta_{n}$ denote the subgroup of those homotopy $n$-spheres that can be realized as the boundary of a parallelizable compact manifold (see \cite[p. 510]{KM}).
For instance, we have $[\Sigma_{K}^{4k+1}] \in bP_{4k+2}$ by construction of the Kervaire spheres, and $bP_{n+1} = 0$ whenever $n$ is even by \cite[Theorem 5.1, p. 512]{KM}.
The following result is part of the classification theorem of homotopy spheres (see \cite[Theorem 6.1, pp. 123f]{luck}).

\begin{theorem}\label{theorem classification of homotopy spheres}
Suppose that $n = 4k+1$ for some integer $k \geq 1$.
Then, $bP_{n+1} = \{[S^{n}], [\Sigma_{K}^{n}]\}$, where $\Sigma_{K}^{n}$ denotes the unique Kervaire sphere of dimension $n$.
Moreover, $bP_{n+1} \cong \mathbb{Z}/2$ whenever $n+3 \notin \{2^{1}, 2^{2}, 2^{3}, \dots\}$, whereas $bP_{n+1} = 0$ for $n \in\{5, 13, 29, 61\}$.
We have $\Theta_{n}/bP_{n+1} \cong \operatorname{coker}J_{n}$, where $J_{n} \colon \pi_{n}(SO) \rightarrow \pi_{n}^{s}$ denotes the stable $J$-homomorphism.
\end{theorem}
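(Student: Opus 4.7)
My plan is to outline the classical Kervaire--Milnor classification argument; since the statement is cited from \cite{luck}, a full derivation would duplicate well-known material. First I would construct the short exact sequence
\begin{displaymath}
0 \to bP_{n+1} \to \Theta_n \stackrel{\eta}{\longrightarrow} \operatorname{coker} J_n \to 0.
\end{displaymath}
For $n \geq 5$ every homotopy $n$-sphere is stably parallelizable, so choosing a stable framing and applying the Pontrjagin--Thom construction yields an element of $\pi_n^s$; the framing is unique up to the action of $\pi_n(SO)$, so the resulting class is well-defined in $\pi_n^s / \operatorname{im} J_n = \operatorname{coker} J_n$. The kernel of $\eta$ consists of those $[\Sigma]$ for which $\Sigma$ bounds a stably parallelizable (hence parallelizable, by simple-connectivity) compact manifold, which is $bP_{n+1}$ by definition; surjectivity follows from the Pontrjagin--Thom correspondence combined with framed surgery below the middle dimension.

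Next I would identify $bP_{n+1}$ for $n = 4k+1$ via the Kervaire (Arf) invariant. Given $[\Sigma] \in bP_{n+1}$ with a parallelizable filling $W^{n+1}$, framed surgery below the middle dimension makes $W$ into a $2k$-connected manifold. The intersection form on $H_{2k+1}(W;\mathbb{Z}/2)$ then admits a quadratic refinement coming from the framing, and its Arf invariant $c(W) \in \mathbb{Z}/2$ is the precise obstruction to further surgery producing a contractible $W$. This yields an injective homomorphism $\Psi \colon bP_{n+1} \to \mathbb{Z}/2$. The Kervaire sphere $\Sigma_{K}^{n}$, being the boundary of the plumbing of two copies of the tangent disc bundle of $S^{2k+1}$, satisfies $\Psi([\Sigma_{K}^{n}]) = 1$ by construction. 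Hence $bP_{n+1} = \{[S^n], [\Sigma_{K}^{n}]\}$, which is either $\mathbb{Z}/2$ or trivial.

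The dichotomy is governed by whether a closed framed $(4k+2)$-manifold of Arf invariant one exists: such a manifold can be connect-summed onto a parallelizable filling of $\Sigma_{K}^{n}$ to kill its Arf invariant, rendering the Kervaire sphere standard. By Browder's theorem this can occur only when $4k+2 = 2^j - 2$, i.e.\ when $n+3$ is a power of $2$, and in all other dimensions one obtains $bP_{n+1} \cong \mathbb{Z}/2$. For $n \in \{5,13,29,61\}$ (corresponding to $2^j \in \{8,16,32,64\}$), explicit framed Kervaire-invariant-one manifolds have been constructed by Mahowald--Tangora and Barratt--Jones--Mahowald, reducing $bP_{n+1}$ to $0$ in these cases.

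The hard part is clearly the Kervaire invariant one problem itself, which is a deep question in stable homotopy theory; for the dimensions appearing in the statement it is settled by Browder's bound together with the low-dimensional existence results just cited, while the remaining borderline case $n = 125$ is deliberately excluded. Since the theorem enters the present paper only as external input to the application in \Cref{application to Kervaire spheres}, I would be content to cite \cite{luck} as a black box packaging these classical results rather than reprove any of them here.
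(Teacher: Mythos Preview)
The paper does not prove this theorem at all: it is stated as external input and attributed to \cite[Theorem 6.1, pp.~123f]{luck}, with no argument given. Your proposal recognizes this and, like the paper, ultimately cites the result as a black box; in that sense your approach and the paper's coincide exactly.

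Your additional sketch of the Kervaire--Milnor argument is accurate for $n = 4k+1$ (the short exact sequence is genuinely exact on the right in these dimensions, the Arf-invariant description of $bP_{4k+2}$ is correct, and the appeal to Browder's theorem together with the known low-dimensional Kervaire-invariant-one classes handles the dichotomy), so it goes strictly beyond what the paper provides. Since the theorem functions here purely as a citation feeding into \Cref{theorem kervaire spheres}, your final sentence---treating it as a packaged reference---is exactly the right disposition.
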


The application of our main results to Kervaire spheres is as follows.

\begin{theorem}\label{theorem kervaire spheres}
Suppose that $n \geq 237$ and $n \equiv 13 \, \operatorname{mod} 16$.
Let $l := (n-1)/2$.
Then, for any exotic $n$-sphere $\Sigma^{n}$ the following statements are equivalent:
\begin{enumerate}[$(i)$]
\item $\Sigma^{n}$ is diffeomorphic to the Kervaire $n$-sphere $\Sigma^{n}_{K}$.
\item $\Sigma^{n}$ admits an $l$-constrained Morse function which represents $0 \in \mathcal{M}_{l}^{n}$.
\end{enumerate}

\end{theorem}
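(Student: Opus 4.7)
The plan is to leverage the decomposition $\mathcal{M}_l^n \cong \mathcal{G}_l^n \oplus \mathbb{Z}/2$ furnished by part $(ii)$ of \Cref{theorem computation of oriented bordism of constrained Morse functions} (noting that $n \equiv 1 \pmod{4}$ and that $\lfloor n/2 \rfloor - l = 0$), where the $\mathbb{Z}/2$-summand is split off by $\Lambda_l^n$. For any $l$-constrained Morse function $f \colon \Sigma^n \to \mathbb{R}$, we have $\beta_l^n([f]) = \varepsilon_l^n([\Sigma^n]) \in \mathcal{G}_l^n$, while introducing a cancelling pair of critical points of indices $(l, l+1)$ (which is itself $l$-constrained) flips $\Lambda_l^n([f])$ by $1$ and leaves the $\mathcal{G}_l^n$-component unchanged. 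Consequently, condition $(ii)$ is equivalent to the single vanishing $\varepsilon_l^n([\Sigma^n]) = 0 \in \mathcal{G}_l^n$.

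For $(i) \Rightarrow (ii)$, recall that $\Sigma_K^n$ bounds the standard plumbing $W^{n+1}$ of two copies of the tangent disc bundle $D\tau(S^{(n+1)/2})$, which is an $l$-connected parallelizable manifold. Removing a small open $(n+1)$-disc from the interior of $W$ produces an oriented $l$-connective bordism between $S^n$ and $\Sigma_K^n$. Applying \Cref{existence of constrained bordisms} (valid since $1 < l < n/2$ and $n \geq 6$) to the standard height function $f_{\mathrm{std}}$ on $S^n$ and any $l$-constrained Morse function $f$ on $\Sigma_K^n$ produces an oriented $l$-constrained generic bordism realizing $[f_{\mathrm{std}}] = [f]$ in $\mathcal{G}_l^n$. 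Since $[f_{\mathrm{std}}] = 0 \in \mathcal{G}_l^n$ (the map $D^{n+1} \to \mathbb{R}^2$, $x \mapsto (x_{n+1}, \|x\|^2 - 1)$, is a generic bordism from $f_{\mathrm{std}}$ to $\emptyset$ whose fold locus consists solely of a definite arc), we conclude $\varepsilon_l^n([\Sigma_K^n]) = 0$, which by the reduction above yields $(ii)$.

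For $(ii) \Rightarrow (i)$, starting from $\varepsilon_l^n([\Sigma^n]) = 0$, applying the homomorphism $\delta_l^n$ of \Cref{theorem constrained generic bordism and connective bordism} yields $[\Sigma^n] = 0 \in \mathcal{C}_{l-1}^n$, so that $\Sigma^n$ bounds an $(l-1)$-connected compact $(n+1)$-manifold. The key next step is to invoke the result of Stolz \cite{stol} on bordism groups of highly connected manifolds: under the arithmetic hypotheses $n \geq 237$ and $n \equiv 13 \pmod{16}$, the kernel of the natural homomorphism $\Theta_n \to \mathcal{C}_{l-1}^n$ coincides with $bP_{n+1}$. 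Combined with \Cref{theorem classification of homotopy spheres}, which gives $bP_{n+1} = \{[S^n], [\Sigma_K^n]\}$ in these dimensions, this forces $\Sigma^n \in bP_{n+1}$; since $\Sigma^n$ is exotic by hypothesis, $\Sigma^n \cong \Sigma_K^n$.

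The main obstacle will be the precise invocation of Stolz's result in the reverse direction: the arithmetic condition $n \equiv 13 \pmod{16}$ secures vanishing of the relevant stable homotopy groups of $SO$ that enter as surgery obstructions when attempting to upgrade the $(l-1)$-connected null-bordism of $\Sigma^n$ to one that is parallelizable, while the bound $n \geq 237$ keeps one in the stable range underlying Stolz's computation. A secondary technical check is that the addition of a cancelling $(l, l+1)$-pair of critical points toggles $\Lambda_l^n$ by exactly $1$; this is immediate from the formula $\Lambda_l^n([f]) \equiv \sigma(f) - \sigma(\Sigma^n;\mathbb{Q}) \pmod{2}$ with $\sigma(f) = C_0(f) + C_l(f)$ and $\sigma(\Sigma^n; \mathbb{Q}) = 1$ for any homotopy sphere.
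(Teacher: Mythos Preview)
Your proposal follows essentially the same route as the paper: the reduction of $(ii)$ to the vanishing $\varepsilon_l^n([\Sigma^n]) = 0$ via the split short exact sequence of \Cref{theorem computation of oriented bordism of constrained Morse functions}$(ii)$ coincides with the paper's final paragraph, and your argument for $(i)\Rightarrow(ii)$ using the plumbing manifold is a concrete instance of the paper's inclusion $bP_{n+1} \subset C_l^n$ (whence $bP_{n+1}\subset G_l^n$).

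There is, however, a real imprecision in your treatment of $(ii)\Rightarrow(i)$. After applying $\delta_l^n$ you land in $\mathcal{C}_{l-1}^n$ and then assert that Stolz's theorem identifies the kernel of $\Theta_n \to \mathcal{C}_{l-1}^n$ with $bP_{n+1}$. But the paper invokes Stolz's Theorem~B$(ii)$ only for the kernel of $\Theta_n \to \mathcal{C}_l^n$: writing $m = n+1 = 2k+d$ with $d=0$ forces $k = (n+1)/2 = l+1$, and the hypotheses that $k$ be odd and $k\geq 113$ are exactly where $n\equiv 13 \pmod{16}$ (giving $l+1\equiv 7 \pmod 8$) and $n\geq 237$ enter. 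The passage from an $(l-1)$-connected to an $l$-connected null-bordism is a \emph{separate} step, carried out in the paper via \Cref{proposition bott periodicity}: since $l \equiv 6 \pmod 8$ one has $\pi_{l-1}(SO)=0$, so Milnor's surgery \cite{mil} upgrades the connectivity without changing the boundary. Your last paragraph gestures at this (``vanishing of the relevant stable homotopy groups of $SO$''), but attributes the whole passage to Stolz; you should keep the two ingredients --- Bott periodicity plus Milnor to reach $\mathcal{C}_l^n$, then Stolz to reach $bP_{n+1}$ --- distinct, as the second does not subsume the first.

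A minor point: your explicit map $D^{n+1}\to\mathbb{R}^2$, $x\mapsto (x_{n+1}, \|x\|^2-1)$, has the correct singular set (a single definite fold arc), but does not literally satisfy the collar normalization of \Cref{definition constrained generic bordism}$(i)$; a routine adjustment near $\partial D^{n+1}$ fixes this.
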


\begin{remark}
There are infinitely many dimensions $n \equiv 13 \, \operatorname{mod} 16$ for which $bP_{n+1} \neq \Theta_{n}$, that is, $\Theta_{n}$ contains exotic spheres different from the Kervaire sphere.
In fact, by \Cref{theorem classification of homotopy spheres} it suffices to show that $\operatorname{coker}J_{n}$ is non-trivial for infinitely many such $n$.
For this purpose, note that for $n \equiv 5 \, \operatorname{mod} 8$ the domain of the stable $J$-homomorphism $J_{n} \colon \pi_{n}(SO) \rightarrow \pi_{n}^{s}$ satisfies $\pi_{n}(SO) = 0$, so that $\operatorname{coker}J_{n} = \pi_{n}^{s}$ (see the proof of Theorem 3.1 in \cite[p. 508]{KM}).
Now, we claim that $\pi_{n}^{s} \neq 0$ whenever $n = 2(p+1)(p-1)-3$ for an odd prime $p$.
Indeed, setting $(r,s) = (1,0)$, we can write any such $n$ in the form $n = 2(rp+s+1)(p-1)-2(r-s)-1$, where $0 \leq s < r \leq p-1$ and $r-s \neq p-1$.
Hence, it follows from \cite[Theorem B, p. 191]{toda} that the $p$-primary component of $\pi_{n}^{s}$ is $\mathbb{Z}_{p}$.

\end{remark}

\begin{proof}[Proof (of \Cref{theorem kervaire spheres})]
By composition of the natural homomorphism $\mathcal{C}_{l+1}^{n} \rightarrow \mathcal{C}_{l}^{n}$ with the homomorphisms of \Cref{theorem constrained generic bordism and connective bordism} we can define homomorphisms
\begin{align*}
c_{l}^{n} &\colon \Theta_{n} = \mathcal{C}_{l+1}^{n} \rightarrow \mathcal{C}_{l}^{n}, \quad [\Sigma^{n}] \mapsto [\Sigma^{n}], \\
g _{l}^{n} &\colon \Theta_{n} \stackrel{c_{l}^{n}}{\longrightarrow} \mathcal{C}_{l}^{n} \stackrel{\varepsilon_{l}^{n}}{\longrightarrow} \mathcal{G}_{l}^{n}, \quad [\Sigma^{n}] \mapsto [f], \\
c_{l-1}^{n} &\colon \Theta_{n} \stackrel{g _{l}^{n}}{\longrightarrow} \mathcal{G}_{l}^{n} \stackrel{\delta_{l}^{n}}{\longrightarrow} \mathcal{C}_{l-1}^{n}, \quad [\Sigma^{n}] \mapsto [\Sigma^{n}],
\end{align*}
where $f \colon \Sigma^{n} \rightarrow \mathbb{R}$ denotes an arbitrarily chosen $l$-constrained Morse function in the definition of $g _{l}^{n}$.
Denote the kernels of $c_{l}^{n}$, $g_{l}^{n}$ and $c_{l-1}^{n}$ by $C_{l}^{n}$, $G_{l}^{n}$ and $C_{l-1}^{n}$, respectively.
Then, by construction, $C_{l}^{n} \subset G_{l}^{n} \subset C_{l-1}^{n}$.

Note that \Cref{proposition bott periodicity} implies that $C_{l}^{n} = C_{l-1}^{n}$ because $l \equiv 6 \, \operatorname{mod} 8$.
Thus, we have shown that $G_{l}^{n} = C_{l}^{n}$.
Furthermore, the inclusion $bP_{n+1} \subset C_{l}^{n}$ holds since by \cite[Theorem 3, p. 49]{mil} any parallelizable compact smooth manifold $W^{m}$ of dimension $m = n+1$ can be made $l = (\lfloor m/2 \rfloor - 1)$-connected by a finite sequence of surgeries without changing $\partial W$.
Conversely, by a theorem of Stolz (see \cite[Theorem B$(ii)$, p. XIX]{stol}), the inclusion $C_{l}^{n} \subset bP_{n+1}$ holds because $m := n +1$ is by assumption of the form $m = 2k+d$ for $d = 0$ and some odd integer $k \geq 113$.
All in all, we have shown that $G_{l}^{n} = C_{l}^{n} = bP_{n+1} = \{[S^{n}], [\Sigma_{K}^{n}]\}$, where the last equality is taken from \Cref{theorem classification of homotopy spheres}.

Thus, for an exotic $n$-sphere $\Sigma^{n}$ statement $(i)$ holds if and only if $[\Sigma^{n}] \in G_{l}^{n}$, that is, $[f] = 0 \in \mathcal{G}_{l}^{n}$ for some (and hence, any) $l$-constrained Morse function $f \colon \Sigma^{n} \rightarrow \mathbb{R}$.
Equivalently, by \Cref{theorem computation of oriented bordism of constrained Morse functions}$(ii)$, $\Sigma^{n}$ satisfies statement $(ii)$.
(In fact, by \Cref{theorem computation of oriented bordism of constrained Morse functions}$(ii)$ and \Cref{existence of a splitting} we have an isomorphism $\mathcal{M}_{l}^{n} \cong \mathcal{G}_{l}^{n} \oplus \mathbb{Z}/2$ given by $[f \colon M^{n} \rightarrow \mathbb{R}] \mapsto ([f], \Lambda^{n}_{l}([f]))$.
If $\Sigma^{n}$ admits an $l$-constrained Morse function $f$ which represents $0 \in \mathcal{M}_{l}^{n}$, then obviously $[f] = 0 \in \mathcal{G}_{l}^{n}$.
Conversely, suppose that $\Sigma^{n}$ admits an $l$-constrained Morse function $f$ which represents $0 \in \mathcal{G}_{l}^{n}$.
If necessary, we modify $f$ by introducing an additional pair of Morse critical points of subsequent indices $l$ and $l+1$ in order to adjust the parity of $\sigma(f) = \sum_{\lambda = 0}^{l}C_{\lambda}(f)$ in such a way that $\Lambda^{n}_{l}([f]) = \sigma(f) - \sigma(\Sigma^{n}; \mathbb{Q}) = 0 \in \mathbb{Z}/2$.
Then, $[f] = 0 \in \mathcal{M}_{l}^{n}$.)
\end{proof}

\begin{remark}\label{remark subgroup filtrations group of homotopy spheres}
The groups $C_{l}^{n}$ and $G_{l}^{n}$ introduced in the proof of \Cref{theorem kervaire spheres} are part of natural subgroup filtrations $C^{n}_{\lfloor n/2\rfloor} \subset \dots \subset C_{1}^{n}$ and $G^{n}_{\lfloor n/2\rfloor} \subset \dots \subset G_{1}^{n}$ of $\Theta_{n}$ which can be defined for any $n \geq 6$ as follows.
Using \Cref{theorem constrained generic bordism and connective bordism}, we define for $1 \leq k \leq \lfloor n/2\rfloor$ the homomorphisms
\begin{align*}
c_{k}^{n} &\colon \Theta_{n} = \mathcal{C}_{\lfloor n/2\rfloor+1}^{n} \rightarrow \mathcal{C}_{k+1}^{n} \stackrel{\varepsilon_{k+1}^{n}}{\longrightarrow} \mathcal{G}_{k+1}^{n} \stackrel{\delta_{k+1}^{n}}{\longrightarrow} \mathcal{C}_{k}^{n}, \quad [\Sigma^{n}] \mapsto [\Sigma^{n}], \\
g _{k}^{n} &\colon \Theta_{n} \stackrel{c_{k}^{n}}{\longrightarrow} \mathcal{C}_{k}^{n} \stackrel{\varepsilon_{k}^{n}}{\longrightarrow} \mathcal{G}_{k}^{n}, \quad [\Sigma^{n}] \mapsto [f],
\end{align*}
where $f \colon \Sigma^{n} \rightarrow \mathbb{R}$ denotes an arbitrarily chosen $k$-constrained Morse function in the definition of $g _{k}^{n}$.
If $C_{k}^{n}$ and $G_{k}^{n}$ denote the kernels of $c_{k}^{n}$ and $g _{k}^{n}$, respectively, then $C_{k}^{n} \subset G_{k}^{n}$ for $1 \leq k \leq \lfloor n/2\rfloor$, and $G_{k}^{n} \subset C_{k-1}^{n}$ for $2 \leq k \leq \lfloor n/2\rfloor$ (compare \cite[Theorem 10.1.3, p. 243]{wra}).
Analogously to the proof of \Cref{theorem kervaire spheres} we can use \cite[Theorem 3, p. 49]{mil} to show that $bP_{n+1} \subset C^{n}_{\lfloor n/2 \rfloor}$.

In general, we do not know whether the resulting filtrations $C_{k}^{n}$ and $G_{k}^{n}$ of $\Theta_{n}$ coincide or not.
\end{remark}

%%%%%%%%%%%%%%%%%%%%%%%%%%%%%%%%%%%%%%%%%%%%%%%%%%%%%%%%%%%%%%%%%%%%%%%%%%%%%%%%%%%%%%
\bibliographystyle{amsplain}

\end{document}